\documentclass[final,3p]{elsarticle}
\usepackage{geometry}
\usepackage{pifont}
\usepackage{mathrsfs,latexsym,float,enumerate,amssymb,amsmath,bm,color,lineno}
\usepackage[all]{xy}
\usepackage{latexsym}
\usepackage{amsthm,a4wide}
\usepackage{graphicx}
\usepackage[colorlinks=true]{hyperref}
\usepackage{amscd,verbatim,cuted}
\usepackage{makecell}
\usepackage{subfigure}
\usepackage{threeparttable}
\usepackage{amscd,verbatim,cuted}
\usepackage{makecell}
\usepackage{booktabs}
\usepackage{multirow}
\usepackage{epstopdf}
\usepackage{overpic}
\geometry{left=2.5cm,right=2.5cm,top=2cm,bottom=2cm}

\theoremstyle{plain}
\newtheorem{theorem}{Theorem}
\newtheorem{lemma}{Lemma}
\newtheorem{proposition}{Proposition}
\newtheorem{corollary}{Corollary}

\theoremstyle{definition}
\newtheorem{definition}{Definition}
\newtheorem{example}{Example}


\newtheorem{remark}{Remark}

\usepackage{pifont}

\journal{RIMNI}

\begin{document}

\begin{frontmatter}
\title{Power of Continuous Triangular Norms with Application to Intuitionistic
Fuzzy Information Aggregation\tnoteref{mytitlenote}}
\tnotetext[mytitlenote]{This work is supported by the Research Initiation Project for
Introduced Talent of Guizhou University of Finance
and Economics (No. 2023YJ23), the project of Young Scientific and Technical Talents Development of
Education Department of Guizhou Province (No.~[2024]80), the Universities Key Laboratory of
System Modeling and Data Mining in Guizhou
Province (No.~2023013), the Natural Science Foundation of Sichuan Province (No.~2022NSFSC1821), and the
High Level Innovative Talent Training Plan Project of Guizhou Province (No.~GCC[2023]006).}

\author[a1,a2]{Xinxing Wu\corref{mycorrespondingauthor}}
\cortext[mycorrespondingauthor]{Corresponding author}
\address[a1]{School of Mathematics and Statistics, Guizhou University of Finance and Economics, Guiyang,
Guizhou 550025, China}
\address[a2]{Zhuhai College of Science and Technology, Zhuhai, Guangdong 519041, China}
\ead{wuxinxing5201314@163.com}

\author[a1]{Xi Li}
\ead{LiXilimou@mail.gufe.edu.cn}

\author[a3]{Dan Huang}
\address[a3]{Xindu No.~1 Beixing Middle School of Chengdu, Chengdu, Sichuan 610504, China}
\ead{2327322647@qq.com}



\begin{abstract}
{The power of continuous Archimedean t-norms is fundamental for generalizing the multiplication and power operations of intuitionistic fuzzy sets (IFSs) within this framework. However, due to the lack of systematic research on the power of general continuous t-norms, it greatly limits the further generalization of the multiplication and power operations for IFSs via general continuous t-norms. This paper investigates the power of continuous t-norms and proposes a new intuitionistic fuzzy (IF) multiple-attribute decision-making (MADM) method. In the theory, the characterization of the power stability and the computational formula of power for continuous t-norms are obtained. Based on this, four fundamental operations induced by continuous t-norms for IFSs are introduced. Furthermore, various IF aggregation operators based on these four operations, namely the IF weighted average (IFWA), the IF weighted geometric (IFWG), and the IF mean weighted average and geometric (IFMWAG) operators, are developed, and their properties are analyzed. In the application, a new MADM method is designed based on the IFMWAG operator, which can remove the hindrance of the indiscernibility on the boundaries of some classical aggregation operators. The practical applicability and the comparative analysis with other MADM methods are furnished to show the advantages of the proposed MADM method.
}
\end{abstract}
\begin{keyword}
Aggregation operator; Intuitionistic fuzzy set; Multiple-attribute decision-making; Power stability; Triangular norm.
\end{keyword}

\end{frontmatter}

\section{Introduction}
{Triangular norms (t-norms) and triangular conorms (t-conorms) originated from Schweizer and Sklar's work~\cite{SS1961}
in the context of probabilistic metric spaces~\cite{SS1983}. These mathematical constructs are instrumental in deriving
several classes of classical aggregation operators, which are essential for multiple-attribute decision-making (MADM).
Since Zadeh~\cite{Z1965} defined the fuzzy sets (FSs) by using membership degrees in 1965 to address issues of uncertainty
and ambiguity, many methodologies for expressing fuzzy information have been developed and successfully applied to fuzzy information aggregation and MADM problems.
However, Zadeh's fuzzy set can't represent the neutral state, i.e., neither opposing nor supporting. This limitation led Atanassov~\cite{Ata1986} to extend Zadeh's FS theory by introducing the notion of intuitionistic fuzzy sets (IFSs) in 1986.
Each IFS is characterized by a membership degree (MD) function and a non-membership degree (NMD) function, with the
restriction that the sum of these two degrees does not exceed $1$. Recognizing that experts often cannot make precise
decisions using exact numbers due to the complexity and uncertainty of available information, Atanassov and
Gargov~\cite{AG1989} further extended IFSs to interval-valued IFSs (IVIFSs) in 1989, replacing the MD and the NMD
by the closed intervals within $[0, 1]$.  To enlarge the range of information expression, Yager~\cite{Yager2017} introduced
the notion of q-rung orthopair fuzzy sets (q-ROFSs) in 2017. These sets are defined by membership degree (MD) and
non-membership degree (NMD) functions with the restriction that the sum of the $q^{\text{th}}$ powers of the MD and
NMD does not exceed $1$. In particular, when $q=2$, a q-ROFS simplifies to a Pythagorean fuzzy set (PFS), which was
also introduced by Yager~\cite{Yager2013}.

In recent years, many aggregation methods for intuitionistic fuzzy (IF) information have been developed.
Atanassov~\cite{Ata1986} and De et al.~\cite{DBR2000} proposed a few basic operations for IFSs by using the algebraic
product t-norm $T_{\textbf{P}}$, including ``addition--$\oplus$", ``{product}--$\otimes$", ``intersection--$\cap$",
``union--$\cup$", ``{complement}", ``scalar multiplication", and ``power". Later, Deschrijver and Kerre~\cite{DK2002}
generalized the addition and multiplication operations by applying t-norms and t-conorms. Based on these operations,
Xu et al.~(\cite{XY2006,Xu2007,XC2012}) introduced several IF aggregation operators, such as the IF weighted, ordered
and hybrid geometric (average) operator, induced generalized IF Choquet integral operator, and induced generalized IF Dempster-Shafer operator, proving their properties of monotonicity, idempotency, and boundedness. Xia et al.~\cite{XXZ2012}
and Beliakov et al.~\cite{BBGMP2011} further generalized these aggregation operators using continuous Archimedean t-(co)norms, and demonstrated their monotonicity, idempotency, and boundedness.  Ye~\cite{Ye2017} proposed the IF hybrid weighted arithmetic and geometric aggregation (IFHWAGA) operator by combining the IF geometric and average operators. Beliakov et
al.~\cite{BBJCF2012} extended the median aggregation operator for IFSs and {\color{blue}interval-valued FSs}.
Based on the Einstein product, Wang and Liu~\cite{WL2012} derived the Einstein weighted averaging and Einstein
ordered weighted averaging operators under the IF setting, and applied them to IF MADM problems. Observing that being a
special form of t-norm, the Einstein product is equal to Hamacher t-norm $T_{\lambda}^{\textbf{H}}$ when the parameter
$\lambda=2$, the main results of Wang and Liu~\cite{WL2012} are direct corollaries of those obtained by Xia et
al.~\cite{XXZ2012}. Garg~\cite{Garg2017} presented some new IF aggregation operators by considering the hesitation
degree based on the Einstein product. Considering the MADM problems expressed by the trapezoidal intuitionistic fuzzy
numbers (TrIFNs), Wan and Yi~\cite{WY2016} proposed some closed operational laws and various power average operators
for TrIFNs based on strict t-norms. Liu and Chen~\cite{LC2017} presented a new MAGDM methodology via the Heronian
aggregation operators for IFSs. To capture the interrelationships among multiple-attribute in the practical MADM
problem, Xu and Yager~\cite{XY2011} introduced the IF Bonferroni mean operator via the {algebraic product t-norm $T_{\textbf{P}}$}. Das et al.~\cite{DGM2017} proposed the IF extended Bonferroni mean operator based on strict t-conorms.
Liu \cite{Liu2014} applied the Hamacher aggregation and the power {M}aclaurin symmetric mean operators for MADM problems
under the IVIFS or q-ROFS setting.}

{Nevertheless, He et
al.~\cite{HH2016,HHC2015} pointed out that the operational laws in \cite{XXZ2012,Xu2007,XC2012} have the disadvantage that
if the rating of an alternative on some attribute is the maximum IF number (IFN) $\langle 1, 0\rangle$,
regardless of the ratings of this alternative on other attributes, the overall rating of this alternative always is
$\langle 1, 0\rangle$ (see \cite[Example~1]{HHC2015}). Clearly, this is impractical. Furthermore, we point out in
Example~\ref{Exm-5-Wu} of this paper that the operational laws presented  in
\cite{CC2016,FR2023,Garg2017,Huang2014,Liu2014,SCY2022,SSSDRT2023,WL2012,Xu2011,ZX2017} still have this disadvantage. To overcome this issue, He et al.~\cite{HHC2015,HCZLT2014} introduced some new interactional
operations for IFSs (see \cite[Definition~6]{HHC2015} and \cite[Definition~5]{HCZLT2014}).
However, those operational laws have the disadvantage that if the rating of an
alternative on some attribute is the minimum IFN $\langle 0, 1\rangle$, regardless of the ratings
of this alternative on other attributes, the overall rating of this alternative
always is $\langle 0, 1\rangle$ (see Example~\ref{Exm-5-Wu}), which is also impractical.
Furthermore, we point out in Example~\ref{Exm-5-Wu} of this paper that the operational laws presented in
\cite{Garg2016,HCZLT2014,HHC2015,LC2017,SCMY2023,XY2006,XY2011,Ye2017,ZX2017} also have this disadvantage.
Besides, all results on IF aggregation operators via t-(co)norms
only work on continuous t-(co)norms with the continuous additive generators,
namely, continuous Archimedean t-(co)norms. This is because these t-(co)norms
have the natural and the explicit power operation by using their
additive generators. However, many continuous t-(co)norms do not have
additive generators. This restricts the generalization of IF
aggregation operators via general continuous t-(co)norms. Meanwhile,
due to the complexity of the definition of the power operation for general
continuous t-norms, which involves pseudo-inverse operations,
theoretical research results in this area are extremely scarce.
To enrich the theoretical research and the practical application of the
power operation for continuous t-norms, using the power operation
for continuous t-(co)norms defined by using the pseudo-inverse (see~\cite[Remark~3.5]{KMP2000},
\cite{AFS2006,GMMP2009}), which will be proved to be equivalent to the power
operation introduced by Walker and Walker~\cite{WW2002} (see Remark~\ref{Remark-4}),
this paper investigates the basic properties of the power for continuous
t-norms and applies them to better aggregate IF information based on
continuous t-norms, which can completely overcome the two disadvantages
mentioned above.}

More precisely, we first prove that a continuous t-norm $T$ is power
stable if and only if every point in $[0, 1]$ is a power stable point, and if and only
if $T=T_{\mathbf{M}}$ (minimum) or $T$ is strict, or $T$ is an ordinal sum of
strict t-norms by using continuous t-norms' representation theorem (\cite{Ba1986,Ling1965}) in
Section~\ref{Sec-III}. Then, we introduce four basic operational laws for IFSs
using continuous t-norms and reveal a few operational properties of these four
operations in Section~\ref{Sec-IV}. Moreover, we introduce the IF
weighted average (geometric) operators in Section~\ref{Sec-V}, which generalize
the main results presented in~\cite{WL2012,XXZ2012,Xu2007,XY2006}.
In Section~\ref{Sec-VI}, we combine the IF weighted average operator and
the IF weighted geometric operator to propose
the IF mean weighted average and geometric operator (IFMWAG), which can overcome
the disadvantage of indiscernibility on the boundaries of some classical IF
aggregation operators studied in~\cite{CC2016,FR2023,Garg2016,Garg2017,HCZLT2014,HHC2015,Huang2014,Liu2014,LC2017,SCMY2023,SCY2022,WL2012,
Xu2007,Xu2011,XY2006,Ye2017,ZX2017},
and prove that it is monotonous, idempotent, and bounded. Meanwhile,
we establish a novel MADM method under the IF framework and show a practical example and
comparative analysis with other decision-making methods to illustrate the effectiveness of the
developed MADM method. Finally, we conclude the investigation in Section~\ref{Sec-VII}.

\section{Preliminaries}\label{S-II}
\subsection{Intuitionistic fuzzy sets}
\begin{definition}[{\textrm{\protect\cite{Ata1986}}}]
An \textit{intuitionistic fuzzy set}~(IFS) $F$ on a set $X$ is defined in the following form:
\begin{equation}
F =\left\{\langle x; \mu_{_{F}}(x), \nu_{_{F}}(x)\rangle
\mid x \in X\right\},
\end{equation}
where $ \mu_{_{F}}\colon X \rightarrow [0,1] $ and $\nu_{_{F}}\colon X \rightarrow [0,1]$
are the \textit{membership degree} and the \textit{non-membership degree} of an element
$x \in X$ in $F$, respectively, and for all $x\in X$,
\begin{equation}
\mu_{_{F}}(x)+\nu_{_{F}}(x)\leq 1.
\end{equation}
Moreover, $\pi_{_F}(x) =1 -\mu_{_F}(x) -\nu_{_F}(x)$
is called the \textit{hesitancy degree} of an element $x$ in $F$.
\end{definition}

In \cite{Xu2007,XC2012}, every pair $\langle\mu, \nu\rangle$ in $[0, 1]^2$ with
$0\leq \mu +\nu\leq 1$
is called an \textit{IF number} (IFN) or an \textit{IF value} (IFV).
Generally, use $\langle \mu_{\alpha}, \nu_{\alpha}\rangle$ to represent an IFV $\alpha$
and let $\tilde{\mathbb{I}}$ be the set of all IFVs.
Additionally, $A(\alpha)=\mu_{\alpha}+\nu_{\alpha}$ and
$S(\alpha)=\mu_{\alpha}-\nu_{\alpha}$ are called the
\textit{accuracy degree} and the \textit{score degree} of $\alpha$, respectively.

Using the basic operations for IFSs, Xu et al.~(\cite{XY2006,Xu2007,XC2012})
developed the following basic operations for IFVs.

\begin{definition}
[{\textrm{\protect\cite{XC2012}}}]
\label{Def-Int-Operations}
Let $\alpha=\langle \mu_{\alpha}, \nu_{\alpha}\rangle$ and
$\beta=\langle\mu_{\beta}, \nu_{\beta}\rangle\in \tilde{\mathbb{I}}$.
For $\lambda>0$, define
\begin{enumerate}[(i)]
\item $\alpha^{\complement}=\langle\nu_{\alpha},\mu_{\alpha}\rangle$;
\item $\alpha\cup \beta=\langle \mu_{\alpha} \vee\mu_{\beta}, \nu_{\alpha} \wedge\nu_{\beta}\rangle$;
\item $\alpha\cap \beta=\langle \mu_{\alpha} \wedge \mu_{\beta}, \nu_{\alpha} \vee \nu_{\beta}\rangle$;
\item $\alpha\oplus \beta=\langle \mu_{\alpha}+\mu_{\beta}-\mu_{\alpha}\mu_{\beta}, \nu_{\alpha}\nu_{\beta}\rangle$;
\item $\alpha\otimes \beta=\langle \mu_{\alpha}\mu_{\beta}, \nu_{\alpha}+\nu_{\beta}-\nu_{\alpha}\nu_{\beta}\rangle$;
\item $\lambda\alpha=\langle 1-(1-\mu_{\alpha})^{\lambda}, (\nu_{\alpha})^{\lambda}\rangle$;
\item $\alpha^{\lambda}=\langle(\mu_{\alpha})^{\lambda}, 1-(1-\nu_{\alpha})^{\lambda}\rangle$.
\end{enumerate}
\end{definition}

To rank all IFVs, Xu and Yager~(\cite{XY2006,Xu2007}) presented a total order
`$\leq_{_{\textrm{Xu}}}$' as follows:

\begin{definition}
[{\textrm{\protect\cite[Definition~1]{XY2006}}}]
\label{de-order(Xu)}
{Let $\alpha$, $\beta\in \tilde{\mathbb{I}}$.}
\begin{itemize}
  \item If $S(\alpha)<S(\beta)$,
then $\alpha$ is smaller than $\beta$,
denoted as $\alpha<_{_{\textrm{Xu}}}\beta$;

  \item If $S(\alpha)=S(\beta)$, then

\begin{itemize}
  \item if $A(\alpha)=A(\beta)$, then $\alpha=\beta$;
  \item if $A(\alpha)<A(\beta)$, then $\alpha$ is smaller
  than $\beta$, denoted as $\alpha<_{_{\textrm{Xu}}}\beta$;
\end{itemize}
\end{itemize}
If $\alpha<_{_{\textrm{Xu}}}\beta$ or $\alpha=\beta$, then
denote it by $\alpha\leq_{_{\textrm{Xu}}} \beta$.
\end{definition}

Szmidt and Kacprzyk~\cite{SK2009} developed another partial order for ranking
IFVs by a comparison function, $\rho(\alpha)=\frac{1}{2}(1+\pi_\alpha)(1-\mu_\alpha)$.
However, it sometimes cannot differentiate between two IFVs. Although Xu and Yager's
order `$\leq_{_{\text{XY}}}$' is a total order for ranking IFVs, its process has
the subsequent shortcomings:
(1) It has high sensitivity to the parameters changes.
(2) It may cause some unreasonable results that the more we know, the smaller the IFV.
(3) It is not preserved under scalar multiplication operation, i.e.,
$\alpha\leq_{_{\text{XY}}} \beta$ might not imply $\lambda \alpha
\leq_{_{\text{XY}}} \lambda \beta$, where $\lambda>0$ (see~\cite[Example~1]{BBGMP2011}).
To conquer such shortcomings, Zhang and Xu~\cite{ZX2012} developed Szmidt and Kacprzyk's comparison
function \cite{SK2009} based on Hwang and Yoon's order preference idea~\cite{HY1981} of similarity to
an ideal point, and defined the ``\textit{$L$-value}" $L(\alpha)$ as follows:
\begin{equation}
L(\alpha) =\frac{1 -\nu_{\alpha}}{(1 -\mu_{\alpha}) +(1 -\nu_{\alpha})}
 =\frac{1 -\nu_{\alpha}}{1 +\pi_{\alpha}},
\end{equation}
where $\alpha=\langle \mu_{\alpha}, \nu_{\alpha}\rangle$ is an IFV.
Applying the similarity function $L(\_)$,
they~\cite{ZX2012} introduced another total order `$\leq_{_{\textrm{ZX}}}$'
for IFVs as follows.

\begin{definition}[{\textrm{\protect\cite{ZX2012}}}]
\label{de-order(ZX)}
Let $\alpha$, $\beta\in \tilde{\mathbb{I}}$.
\begin{itemize}
  \item If $L(\alpha)< L(\beta)$, then $\alpha$ is
  smaller than $\beta$, denoted as $\alpha<_{_{\textrm{ZX}}} \beta$;
  \item If $L(\alpha)=L(\beta)$, then
  \begin{itemize}
    \item if $A(\alpha)=A(\beta)$, then $\alpha=\beta$;
    \item if $A(\alpha)<A(\beta)$, then $\alpha$ is
    smaller than $\beta$, denoted as $\alpha<_{_{\textrm{ZX}}} \beta$.
  \end{itemize}
\end{itemize}
If $\alpha<_{_{\textrm{ZX}}} \beta$ or $\alpha=\beta$, then denote
it by $\alpha\leq_{_{\textrm{ZX}}} \beta$.
\end{definition}

The following example shows that the order `$\leq_{_{\textrm{ZX}}}$'
in Definition \ref{de-order(ZX)} is likewise not preserved under
scalar multiplication operation.

\begin{example}
Take $\alpha=\langle 0.5, 0.4\rangle$, $\beta=\langle 0.51, 0.41\rangle$,
and $\lambda=0.6$.
Since $L(\alpha)=\frac{0.6}{1.1}<\frac{0.59}{1.08} =L(\beta)$, we have
$\alpha <_{_{\textrm{ZX}}}\beta$.
But $\lambda \alpha=\langle 1-0.5^{0.6}, 0.4^{0.6}\rangle$,
$\lambda \beta =\langle 1-0.49^{0.6}, 0.41^{0.6}\rangle$, and
$L(\lambda \alpha)=\frac{1-0.4^{0.6}}{1+0.5^{0.6}-0.4^{0.6}}\approx 0.3906$,
$L(\lambda \beta)=\frac{1-0.41^{0.6}}{1+0.49^{0.6}-0.41^{0.6}}\approx 0.3886$, and thus
$\lambda \alpha>_{_{\textrm{ZX}}} \lambda \beta$. Therefore,
$\alpha<_{_{\textrm{ZX}}} \beta$
does not imply $\lambda \alpha<_{_{\textrm{ZX}}}\lambda \beta$.
\end{example}

\subsection{Triangular norm}

{Triangular norms (t-norms) were systematically investigated by Schweizer and
Sklar~\cite{SS1961,SS1983} in the framework of probabilistic metric
spaces aiming at an extension of the triangle inequality. As an extension of
the logical connective \textit{conjunction} in classical two-valued logic,
t-norms have been used widely in decision making~\cite{GMMP2009,KMP2000}
and fuzzy set theory \cite{ZDe2023}.}

\begin{definition}[{\textrm{\protect\cite{KMP2000}}}]
A mapping $T: [0, 1]^2\rightarrow [0, 1]$ is said to be a
\textit{triangular norm} (or briefly, \textit{t-norm}) on $[0, 1]$ if,
for any $x$, $y$, $z \in [0, 1]$, the following conditions
are satisfied:
\begin{itemize}
\item[(T1)] $T(x, y)=T(y, x)$ (commutativity);

\item[(T2)] $T(x, T(y, x))=T(T(x, y), x)$ (associativity);

\item[(T3)] $T(x, y)\leq T(x, z)$
for $y\leq z$ (monotonicity);

\item[(T4)] $T(x, 1)=x$ (neutrality).
\end{itemize}
\end{definition}

Schweizer and Sklar~\cite{SS1961} introduced
triangular conorms as a dual concept of t-norms as follows.

A \textit{triangular conorm} (or briefly, \textit{t-conorm}) is a mapping
$S :[0, 1]^2 \rightarrow [0, 1]$, which, for any  $x$, $y$,
$z \in [0, 1]$, satisfies (T1)--(T3) and (S4):
\begin{itemize}
\item[(S4)] $S(x, 0)=x$ (neutrality).
\end{itemize}
\begin{proposition}[{\textrm{\protect\cite[Proposition~1.15]{KMP2000}}}]
A mapping $T$ is a t-norm if and only if there is a t-conorm $S$ such that,
for any $(x, y) \in [0, 1]^2$,
\begin{equation}
\label{eq-7.1}
T(x, y)=1-S(1-x, 1-y).
\end{equation}
\end{proposition}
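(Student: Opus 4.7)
The plan is to establish the equivalence via the order-reversing involution $\phi\colon[0,1]\to[0,1]$, $\phi(x)=1-x$. Given any t-norm $T$, I would define $S(x,y)=1-T(1-x,1-y)$ and verify the four t-conorm axioms; conversely, starting from any t-conorm $S$, the function $T(x,y)=1-S(1-x,1-y)$ should be shown to satisfy the t-norm axioms. Because $\phi\circ\phi=\mathrm{id}$, the relation \eqref{eq-7.1} is symmetric in $T$ and $S$, so the two constructions are mutually inverse, and the equivalence itself requires no work beyond verifying the axiom checks in one direction.

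For the forward direction I would proceed axiom by axiom. Commutativity (T1) of $S$ is immediate from commutativity of $T$ by swapping the two arguments of $T(1-x,1-y)$. Associativity (T2) reduces to associativity of $T$: expanding $S(x,S(y,z))=1-T\bigl(1-x,\,T(1-y,1-z)\bigr)$ via $1-(1-a)=a$ and doing the analogous expansion of $S(S(x,y),z)$ matches the two sides by (T2) applied to $T$. For monotonicity (T3), $x\le y$ implies $1-x\ge 1-y$, so monotonicity of $T$ gives $T(1-x,1-z)\ge T(1-y,1-z)$; applying $1-(\cdot)$ reverses the inequality and yields $S(x,z)\le S(y,z)$. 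Finally, neutrality (S4) follows from (T4): $S(x,0)=1-T(1-x,1)=1-(1-x)=x$. The reverse implication is entirely parallel, with the roles of $T$ and $S$, and of (T4) and (S4), interchanged, and in particular uses $S(x,0)=x$ to derive $T(x,1)=1-S(1-x,0)=1-(1-x)=x$.

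I do not anticipate any genuine obstacle: the proposition simply expresses that conjugation by the involution $\phi$ interchanges the t-norm and t-conorm axiom systems. The only point requiring attention is bookkeeping the direction of inequalities in the monotonicity check, since $\phi$ is order-reversing, and keeping track of the two nested applications of $\phi$ in the associativity check so that the inner $1-(1-a)=a$ cancellations are made explicit.
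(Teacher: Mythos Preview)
Your proposal is correct and is the standard argument: conjugation by the order-reversing involution $\phi(x)=1-x$ exchanges the t-norm and t-conorm axiom systems, and the axiom-by-axiom verification you outline goes through without difficulty. Note, however, that the paper does not supply its own proof of this proposition; it is simply quoted from \cite[Proposition~1.15]{KMP2000} as a known preliminary result, so there is nothing in the paper to compare your argument against.
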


The t-norm $T$ given by formula~\eqref{eq-7.1} is called the \textit{dual t-norm}
of $S$. Analogously, we can give the definition of the dual t-conorm of a t-norm $T$.

Because of the associativity, by~\cite[Definition~3.23]{GMMP2009}
and \cite[Remark~1.10]{KMP2000}, we can extend a t-norm
$T$ to an $n$-ary function $T^{(n)}:[0, 1]^{n} \rightarrow [0, 1]$ as follows:
$$
T^{(n)}(x_1, \ldots, x_{n-1}, x_n) \triangleq
T(T^{(n-1)}(x_1, \ldots, x_{n-1}), x_n).
$$
In particular, if $x_1 =x_2 =\cdots =x_n =x$,
then briefly denote
$$
x_{T}^{(n)} =T^{(n)}(x, x, \ldots, x), \quad n\geq 2,
$$
and
$$
x_{T}^{(0)} =1 \text{ and } x_{T}^{(1)} =x.
$$

\begin{definition}[{\textrm{\protect\cite{KMP2000}}}]
Assume that $T$ is a t-norm.
\begin{enumerate}[(i)]
\item A point $x\in [0, 1]$ is said to be an \textit{idempotent element}
of $T$ if $x_{T}^{(2)}=x$.
\item A point $x \in (0, 1)$ is said to be a \textit{nilpotent element} of
$T$ if $x_{T}^{(m)}=0$ holds for some $m\in\mathbb{N}$.
\item A point $x \in (0, 1)$ is said to be a \textit{zero divisor} of $T$
if $T(x, y)=0$ holds for some $y\in (0, 1)$.
\end{enumerate}
The sets of all idempotent elements, all nilpotent elements,
and all zero divisors of $T$ are denoted by $\mathfrak{I}_T$,
$\mathfrak{N}_T$, and $\mathfrak{D}_T^{0}$, respectively.
\end{definition}

\begin{definition}[{\textrm{\protect\cite[Definition~3.44]{KMP2000},
\cite[Definition~3.45]{GMMP2009}}}]
Let $\left\{T_\lambda\right\}_{\lambda\in \mathscr{A}}$ be a class of t-norms
and $\left\{(a_\lambda, e_\lambda)\right\}_{\lambda\in \mathscr{A}}$ be a class of non-empty
open subintervals of $[0, 1]$ with $(a_{\lambda_1}, e_{\lambda_1})\cap
(a_{\lambda_2}, e_{\lambda_2})=\varnothing$ for $\lambda_1\neq \lambda_2$.
The mapping $T:[0, 1]^2\rightarrow [0, 1]$ defined by
\begin{equation}
T(x, y)=
\begin{cases}
a_\lambda +(e_\lambda -a_\lambda) \cdot T_\lambda \left(\frac{x -a_\lambda}{e_\lambda -a_\lambda},
\frac{y -a_\lambda}{e_\lambda -a_\lambda}\right), & (x, y)\in \left[a_\lambda, e_\lambda\right]^2,\\
\min\{x, y\}, & \text{otherwise},
\end{cases}
\end{equation}
is called the \textit{ordinal sum} of the \textit{summands}
$\left\langle a_\lambda, e_\lambda, T_\lambda\right\rangle, \lambda\in \mathscr{A}$,
denoted by $T=\left(\langle a_\lambda, e_\lambda, T_\lambda\rangle\right)_{\lambda\in \mathscr{A}}.$
\end{definition}

\begin{definition}[{\textrm{\protect\cite{KMP2000}}}]
Let $\varphi: [a, b]\rightarrow [c, d]$ be a monotone
function, where $[a, b]$ and $[c, d]$ are two closed
subintervals of $[-\infty, +\infty]$. Define the \textit{pseudo-inverse} $\varphi^{(-1)}:
[c, d]\rightarrow [a, b]$ of $\varphi$ as
\begin{equation*}
\varphi^{(-1)}(y) =\sup\{x\in [a, b]\mid
(\varphi(x) -y)(\varphi(b) -\varphi(a)) <0\}.
\end{equation*}
\end{definition}

\begin{lemma}[{\textrm{\protect\cite[Remark~3.4]{KMP2000}}}]
\label{Pseudo-Inverse-Lemma}
Assume that the function $\varphi:[a, b]\rightarrow
[c, d]$ is continuous and strictly decreasing. Then,
for any $y\in \mathrm{Ran}(\varphi)=[\varphi(b), \varphi(a)]$,
$\varphi^{(-1)}(y)=\varphi^{-1}(y)$.
\end{lemma}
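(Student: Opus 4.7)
The plan is to unpack the definition of the pseudo-inverse and exploit strict monotonicity to collapse the defining supremum to the unique preimage of $y$. I would first simplify the sign condition: since $\varphi$ is strictly decreasing, $\varphi(b) - \varphi(a) < 0$, so the condition $(\varphi(x) - y)(\varphi(b) - \varphi(a)) < 0$ reduces to $\varphi(x) > y$, yielding
\[
\varphi^{(-1)}(y) = \sup\{x \in [a, b] \mid \varphi(x) > y\}.
\]

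I would then invoke continuity of $\varphi$ together with $y \in [\varphi(b), \varphi(a)] = \mathrm{Ran}(\varphi)$ and injectivity to obtain, via the intermediate value theorem, a unique $x_0 \in [a, b]$ with $\varphi(x_0) = y$; by definition $x_0 = \varphi^{-1}(y)$. Strict monotonicity then gives $\varphi(x) > \varphi(x_0) \iff x < x_0$, so
\[
\{x \in [a, b] \mid \varphi(x) > y\} = [a, x_0).
\]
For $x_0 > a$ the supremum of $[a, x_0)$ is $x_0$; for $x_0 = a$ (equivalently $y = \varphi(a)$) the set is empty, and the convention $\sup \varnothing = a$ built into the definition (to ensure $\varphi^{(-1)}$ takes values in $[a, b]$) again returns $a = x_0$. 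In either case $\varphi^{(-1)}(y) = x_0 = \varphi^{-1}(y)$.

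The argument is essentially a direct unpacking of definitions; no deep tool is required beyond strict monotonicity (to rewrite the inequality) and continuity (for the intermediate value theorem). The only mildly delicate point I anticipate is the boundary case $y = \varphi(a)$, where the defining set is empty and one must rely on the $\sup \varnothing$ convention implicit in the pseudo-inverse definition; without it the identity would fail precisely at that single endpoint, but with it the equality is automatic.
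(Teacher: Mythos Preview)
Your argument is correct and is the standard one: reduce the sign condition via $\varphi(b)-\varphi(a)<0$, identify the defining set as $[a,x_0)$ by strict monotonicity, and handle the empty case with the $\sup\varnothing=a$ convention (which the paper itself relies on, e.g.\ in Proposition~\ref{0-Power-Pro}). Note, however, that the paper does not prove this lemma at all---it is quoted without proof from \cite[Remark~3.4]{KMP2000}---so there is no in-paper argument to compare against; your write-up would serve perfectly well as the omitted justification.
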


\begin{definition}[{\textrm{\protect\cite{KMP2000}}}]
An \textit{additive generator} (AG) of a t-norm $T$ is a mapping
$G:[0, 1]\rightarrow [0, +\infty]$ having the following
properties: (1) $G$ is strictly decreasing;
(2) $G$ is right-continuous at $0$; (3) $G(1)=0$;
(4) For any $(x, y)\in[0, 1]^2$, it holds
$G(x)+G(y)\in \mathrm{Ran}(G)\cup
[G(0),  +\infty]$
and
$T(x, y)=G^{(-1)}
(G(x)+G(y)).$
\end{definition}

\begin{definition}[{\textrm{\protect\cite{KMP2000}}}]
A t-norm $T$ is
\begin{enumerate}[(1)]
  \item \textit{strictly monotone} if $T(x, y)<T(x, z)$,
  for $x>0$ and $y<z$.

  \item \textit{Archimedean} if, for any $0<x$, $y<1$,
  there exists some $m \in \mathbb{N}$ such that $x_{T}^{(m)}
  < y$.
\end{enumerate}
The set of all continuous Archimedean t-norms is denoted by
$\mathscr{T}_{_{\mathrm{ConA}}}$.
\end{definition}

\begin{lemma}[{\textrm{\protect\cite[Theorem~2.12]{KMP2000}}}]
\label{Archi-Char}
A t-norm $T$ is Archimedean if and only if, for any $x\in (0, 1)$,
$\lim_{n\rightarrow +\infty} x_{T}^{(n)}=0$.
\end{lemma}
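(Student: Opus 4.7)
The plan is to establish the biconditional by exploiting the monotonicity of the iterate sequence $\left(x_T^{(n)}\right)_{n\geq 1}$. First I would record a preliminary observation: for any fixed $x \in (0, 1)$, this sequence is non-increasing. Indeed, from the recursion $x_T^{(n+1)} = T(x_T^{(n)}, x)$ together with monotonicity (T3) and neutrality (T4), one obtains $x_T^{(n+1)} = T(x_T^{(n)}, x) \leq T(x_T^{(n)}, 1) = x_T^{(n)}$. Being also bounded below by $0$, the sequence converges to some $L = L(x) \in [0, x]$.

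The implication ($\Leftarrow$) is then an immediate application of the definition. Assuming $\lim_{n \to +\infty} x_T^{(n)} = 0$ for every $x \in (0, 1)$, given arbitrary $x, y \in (0, 1)$, the convergence yields some $m \in \mathbb{N}$ for which $x_T^{(m)} < y$, which is precisely the Archimedean condition.

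For the implication ($\Rightarrow$) I would argue by contradiction. Fix $x \in (0, 1)$ and let $L$ be the limit guaranteed by the preliminary observation. Suppose $L > 0$; since $L \leq x < 1$, we have $L \in (0, 1)$, so the Archimedean hypothesis applied to $x$ and $y := L$ provides some $m \in \mathbb{N}$ with $x_T^{(m)} < L$. But since $\left(x_T^{(n)}\right)$ decreases to $L$, one has $x_T^{(n)} \geq L$ for every $n$, contradicting $x_T^{(m)} < L$. Hence $L = 0$, as required. The only point in the argument requiring a moment's care is verifying that the hypothetical limit $L$ lies strictly inside $(0, 1)$ before invoking the Archimedean property; once that is noted, neither direction presents a genuine obstacle, and the entire proof reduces to the monotone-convergence observation.
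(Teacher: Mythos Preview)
Your argument is correct. Note, however, that the paper does not supply its own proof of this lemma: it is quoted as a known result from \cite[Theorem~2.12]{KMP2000} and left unproved. Your proof via the monotone-convergence observation is precisely the standard argument given in that reference, so there is nothing to compare.
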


\begin{definition}[{\textrm{\protect\cite{KMP2000}}}]
A continuous t-norm $T$ is
\begin{enumerate}[(i)]
\item \textit{nilpotent} if $\mathfrak{N}_T=(0, 1)$;
\item \textit{strict} if it is strictly monotone.
\end{enumerate}
\end{definition}

\begin{lemma}[{\textrm{\protect\cite[Proposition~2.15]{KMP2000}}}]
\label{Strict->Archi}
Every strict t-norm is Archimedean.
\end{lemma}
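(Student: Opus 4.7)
The plan is to apply Lemma~\ref{Archi-Char}, which reduces Archimedeanicity to proving that $\lim_{n\to\infty} x_T^{(n)} = 0$ for every $x \in (0,1)$. So I would fix an arbitrary $x \in (0,1)$ and analyse the sequence of iterates $a_n := x_T^{(n)}$, which by construction satisfies $a_{n+1} = T(a_n, x)$ with $a_1 = x$. The goal is to show $a_n \downarrow 0$.

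First I would argue that $\{a_n\}$ is strictly decreasing. Since $1$ is the neutral element and $x<1$, strict monotonicity in the second argument yields $T(a_n, x) < T(a_n, 1) = a_n$, provided $a_n > 0$. A short induction using the standard identity $T(y,0) = 0$ (which follows from $T(y,0)\leq T(1,0)=0$) shows that each $a_n$ remains strictly positive, so the strict decrease really does hold at every step. Hence $\{a_n\}$ is a strictly decreasing positive sequence, bounded below by $0$, and therefore converges to some limit $L \in [0, x)$.

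Next I would invoke continuity of $T$: passing to the limit in $a_{n+1} = T(a_n, x)$ gives $L = T(L, x)$. If $L > 0$, strict monotonicity forces $T(L, x) < T(L, 1) = L$, contradicting $L = T(L, x)$. Therefore $L = 0$, and Lemma~\ref{Archi-Char} concludes that $T$ is Archimedean.

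The only mild obstacle I would flag is the need to keep the iterates strictly positive, since strict monotonicity is formulated under the hypothesis $x>0$; without this inductive check, one cannot legitimately conclude $a_{n+1}<a_n$ at each step. Once that small verification is in place, the rest is a routine monotone-convergence argument combined with continuity of $T$, so I do not anticipate any further technical difficulty.
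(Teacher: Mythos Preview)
Your argument is correct. The paper does not supply its own proof of this lemma---it is quoted from \cite[Proposition~2.15]{KMP2000} without elaboration---so there is nothing to compare against in the paper proper. Your route via Lemma~\ref{Archi-Char}, strict decrease of the iterates, and a continuity/fixed-point contradiction is exactly the standard proof one finds in the literature (an equivalent variant passes to the limit in $x_T^{(2n)}=T(x_T^{(n)},x_T^{(n)})$ to see that the limit is idempotent, hence $0$); either way the content is the same, and your handling of the positivity of $a_n$ via $T(a_n,x)>T(a_n,0)=0$ is the right inductive step.
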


\begin{lemma}[{\textrm{\protect\cite[Corollary~3.30]{KMP2000}}}]
\label{Strict-Nil-Char}
Let $G$ be an AG of a t-norm $T\in \mathscr{T}_{_{\mathrm{ConA}}}$. Then
\begin{enumerate}[{\rm (i)}]
\item $T$ is nilpotent if and only if $G(0)<+\infty$;
\item $T$ is strict if and only if $G(0)=+\infty$.
\end{enumerate}
\end{lemma}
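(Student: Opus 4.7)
The plan is to reduce both biconditionals to the single identity $x_T^{(m)} = G^{(-1)}(m\,G(x))$, obtained by induction on $m$ from associativity together with the defining formula $T(u,v) = G^{(-1)}(G(u)+G(v))$. Once this iterate formula is in hand, nilpotency and strictness can be read off directly from the behavior of $G$ at $0$.

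For part (i), I would first observe, straight from the pseudo-inverse definition, that $G^{(-1)}(t) = 0$ is equivalent to $t \geq G(0)$; hence $x_T^{(m)} = 0$ if and only if $m\,G(x) \geq G(0)$. If $G(0) < +\infty$, then for each $x \in (0,1)$ one has $G(x) > 0$ (since $G$ is strictly decreasing with $G(1) = 0$), so $m\,G(x) \geq G(0)$ for all sufficiently large $m$; this gives $\mathfrak{N}_T = (0,1)$, so $T$ is nilpotent. Conversely, if $T$ is nilpotent, pick any $x \in (0,1)$ with $x_T^{(m)} = 0$ for some finite $m$; then $m\,G(x)$ is a finite upper bound for $G(0)$, forcing $G(0) < +\infty$.

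For part (ii), I would argue from (i) together with a direct check. If $G(0) = +\infty$, then $G$ is a continuous strictly decreasing bijection from $[0,1]$ onto $[0,+\infty]$, so by Lemma~\ref{Pseudo-Inverse-Lemma} $G^{(-1)}$ agrees with the genuine inverse $G^{-1}$ on its entire domain. For $x > 0$ and $y < z$, strict monotonicity of $G$ and $G^{-1}$ immediately yields $T(x,y) < T(x,z)$, so $T$ is strict. Conversely, if $T$ is strict, then $T(x,y) > T(x,0) = 0$ whenever $x, y > 0$, so $T$ has no zero divisors and, a fortiori, no nilpotent elements; part (i) then forces $G(0) = +\infty$.

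The main technical obstacle I anticipate is the endpoint identity $G^{(-1)}(t) = 0 \iff t \geq G(0)$ in the case where $G(0)$ is finite but does not lie in $\mathrm{Ran}(G)$; Lemma~\ref{Pseudo-Inverse-Lemma} does not cover this situation, so I would have to unpack the supremum definition $\varphi^{(-1)}(y) = \sup\{x : (\varphi(x)-y)(\varphi(b)-\varphi(a)) < 0\}$ carefully, using that $G$ is right-continuous at $0$. A secondary care is the boundary value $y = 0$ in the strict-monotonicity argument, which is easily dispatched using $T(x,0) = 0$ and the fact that $G^{(-1)}(+\infty) = 0$ when $G(0) = +\infty$.
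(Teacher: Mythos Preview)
The paper does not supply its own proof of this lemma; it is quoted verbatim as \cite[Corollary~3.30]{KMP2000} and used as a black box. So there is nothing to compare your argument against within the paper itself.

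That said, your sketch is essentially the standard textbook argument and is correct. Two minor remarks. First, the iterate formula $x_T^{(m)}=G^{(-1)}(m\,G(x))$ that you invoke is in fact proved later in this paper as Lemma~\ref{Power-1/n-Thm}, so in the paper's logical order you would be forward-referencing; in a self-contained write-up you would simply supply the short induction yourself, as you indicate. Second, your anticipated ``technical obstacle'' about $G(0)$ finite but not in $\mathrm{Ran}(G)$ does not actually arise here: since $T\in\mathscr{T}_{\mathrm{ConA}}$, Lemma~\ref{Cont-Archi-Char} guarantees a continuous additive generator, and additive generators of a continuous Archimedean t-norm are unique up to a positive scalar, so the given $G$ is continuous and $G(0)\in\mathrm{Ran}(G)$. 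With that observation your endpoint identity $G^{(-1)}(t)=0\iff t\geq G(0)$ is immediate from Lemma~\ref{Pseudo-Inverse-Lemma}, and the rest of your argument goes through cleanly.
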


\begin{lemma}[{\textrm{\protect\cite[Theorem~5.1]{KMP2000}}}]
\label{Cont-Archi-Char}
A t-norm $T$ is in $\mathscr{T}_{_{\mathrm{ConA}}}$ if
and only if it has a continuous AG.
\end{lemma}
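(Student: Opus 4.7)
The plan is to prove the two implications separately. For the easier direction, assume $T$ admits a continuous AG $G\colon[0,1]\to[0,+\infty]$. I would first observe that since $G$ is continuous and strictly decreasing, Lemma~\ref{Pseudo-Inverse-Lemma} identifies $G^{(-1)}$ with the ordinary inverse on $\mathrm{Ran}(G)=[0,G(0)]$, while the pseudo-inverse definition forces $G^{(-1)}$ to vanish on $[G(0),+\infty]$; these pieces glue into a continuous function on $[0,+\infty]$. Continuity of $T(x,y)=G^{(-1)}(G(x)+G(y))$ then follows by composition. For the Archimedean property, a routine induction using associativity and the generator identity yields $x_T^{(n)}=G^{(-1)}(nG(x))$; for $x\in(0,1)$ one has $G(x)>0$ (since $G$ is strictly decreasing with $G(1)=0$), hence $nG(x)\to+\infty$ and $x_T^{(n)}\to G^{(-1)}(+\infty)=0$, so Archimedeanity follows from Lemma~\ref{Archi-Char}.

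For the harder direction, assume $T\in\mathscr{T}_{_{\mathrm{ConA}}}$. I would first establish that the only idempotents of $T$ are $0$ and $1$: any $x\in(0,1)$ with $x_T^{(2)}=x$ would satisfy $x_T^{(n)}=x$ for all $n$, contradicting $\lim_{n\to\infty}x_T^{(n)}=0$ from Lemma~\ref{Archi-Char}. Fix $a\in(0,1)$; the sequence $a_T^{(n)}$ is then strictly decreasing and tends to $0$. Using continuity of $t\mapsto t_T^{(2)}$ together with the intermediate value theorem, and using the absence of nontrivial idempotents to obtain uniqueness (via cancellation inherited from strict monotonicity on $(0,1]$), I would inductively construct the dyadic $T$-roots $a_T^{(1/2^k)}$ characterized by $(a_T^{(1/2^k)})_T^{(2^k)}=a$, and then the full dyadic family $a_T^{(m/2^k)}$ by iterating $T$.

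Next, I would define $G$ on the dense set $D=\{a_T^{(m/2^k)}:m,k\in\mathbb{N}\}$ by $G(a_T^{(m/2^k)})=m/2^k$ and extend $G$ to $[0,1]$ by monotonicity and continuity, setting $G(1)=0$ and $G(0)=\lim_{x\to0^+}G(x)$, which is finite in the nilpotent case and $+\infty$ in the strict case, matching Lemma~\ref{Strict-Nil-Char}. The functional equation $G(T(x,y))=G(x)+G(y)$ holds on $D\times D$ by construction, since associativity gives $T(a_T^{(m_1/2^k)},a_T^{(m_2/2^k)})=a_T^{((m_1+m_2)/2^k)}$; continuity of $T$ and of $G$ then transfers the equation to all of $[0,1]^2$. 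Strict monotonicity and right-continuity of $G$ at $0$ follow from the construction together with the Archimedean hypothesis.

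The main obstacle is the third step: showing that the values assigned on the countable dyadic set $D$ extend to a genuinely continuous, strictly decreasing function on $[0,1]$ satisfying the generator equation everywhere. This is essentially a Mostert--Shields-style topological semigroup representation of $([0,1],T)$, and it requires delicate use of continuity plus the no-nontrivial-idempotents consequence of Archimedeanity to rule out plateaus or jumps of $G$ between dyadic points and to guarantee uniqueness of the dyadic roots that underpin the whole construction.
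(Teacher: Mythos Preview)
The paper does not give its own proof of this lemma: it is stated with a citation to \cite[Theorem~5.1]{KMP2000} and used as a black box throughout Section~\ref{Sec-III}. So there is no ``paper's proof'' to compare against; your proposal is effectively a sketch of the classical representation theorem (Ling~\cite{Ling1965}, Mostert--Shields~\cite{MS1957}, as presented in~\cite{KMP2000}) that the authors simply import.

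Your outline is broadly the standard one and is essentially correct, but one phrase is misleading: you invoke ``cancellation inherited from strict monotonicity on $(0,1]$'' to get uniqueness of dyadic roots. A continuous Archimedean t-norm need not be strictly monotone globally (e.g.\ $T_{\mathbf{L}}$ is not), so you cannot assume this. What you actually have, and what suffices, is \emph{conditional} cancellation: if $T(x,y)=T(x,z)>0$ then $y=z$, which follows from continuity together with the absence of nontrivial idempotents. This gives uniqueness of the square root of any $x>0$, which is all the dyadic construction needs. You should also be explicit that the set $D=\{a_T^{(m/2^k)}\}$ is dense in the interval $(0,1)$ (using $a_T^{(n)}\to 0$ and $a_T^{(1/2^k)}\to 1$), since without density the continuous extension step is vacuous.
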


The following important representation for continuous t-norms,
which plays a key role in the proof of the result in the next section
(see Theorem~\ref{Power-Stab-Char}), can be derived from results in \cite{MS1957}
in the context of $I$-semigroups. For a detailed proof, one is referred
to~\cite[Proposition~5.11]{KMP2000} or \cite[Theorem~3.49]{GMMP2009}.

\begin{lemma}
[{\textrm{\protect\cite[Theorem~5.11]{KMP2000}, \cite[Theorem~3.49]{GMMP2009}, \cite{Ling1965}}}]
\label{Cont-Char-Thm}
The following are equivalent:
\begin{itemize}
  \item[{\rm (i)}] $T$ is a continuous t-norm.
  \item[{\rm (ii)}] One of the following statements holds:
  \begin{itemize}
  \item[{\rm ii-1)}] $T=T_{\textbf{M}}$;
  \item[{\rm ii-2)}] $T\in \mathscr{T}_{_{\mathrm{ConA}}}$;
  \item[{\rm ii-3)}] $T$ is an ordinal sum of t-norms in $\mathscr{T}_{_{\mathrm{ConA}}}$.
\end{itemize}
\end{itemize}
\end{lemma}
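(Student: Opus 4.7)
The plan is to prove the nontrivial direction (i)$\Rightarrow$(ii) via the structure of the idempotent set, following the Mostert--Shields approach to $I$-semigroups. The converse direction is routine: $T_{\mathbf{M}}$ is obviously continuous, continuous Archimedean t-norms are continuous by definition, and for an ordinal sum of continuous t-norms one checks continuity by verifying that the summand definition agrees with $\min$ along the diagonal boundaries $x=a_\lambda$ and $x=e_\lambda$, using that each $T_\lambda$ has $0$ and $1$ as boundary neutral/annihilator elements after rescaling.

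For the main direction, assume $T$ is a continuous t-norm and consider $\mathfrak{I}_T=\{x\in[0,1]\mid T(x,x)=x\}$. First I would show that $\mathfrak{I}_T$ is a closed subset of $[0,1]$: it is the preimage of $0$ under the continuous map $x\mapsto x-T(x,x)$, and it contains $0$ and $1$. Next I would establish the key absorption property: if $e\in\mathfrak{I}_T$ and $x\leq e$, then $T(x,e)=x$. This follows by sandwiching: $T(x,e)\leq T(e,e)=e$, $T(x,e)\leq T(x,1)=x$, and for the reverse inequality one uses monotonicity together with the continuity-driven intermediate value argument on $t\mapsto T(e,t)$ which maps $[0,1]$ continuously onto $[0,e]$. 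Consequently, whenever $x\leq e_1\leq e_2$ with $e_1,e_2$ idempotent, $T(x,y)$ for $(x,y)\in[e_1,e_2]^2$ stays inside $[e_1,e_2]$.

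Now write $[0,1]\setminus\mathfrak{I}_T$ as a countable disjoint union of open intervals $(a_\lambda,e_\lambda)$ (the standard decomposition of an open subset of $\mathbb R$), with $a_\lambda,e_\lambda\in\mathfrak{I}_T$. By the absorption property, the restriction of $T$ to $[a_\lambda,e_\lambda]^2$ maps into $[a_\lambda,e_\lambda]$, and outside the union of these squares $T$ coincides with $\min$ (since there the smaller argument lies in $\mathfrak{I}_T\cap[0,\max]$ and absorption applies). Define $T_\lambda(u,v)=\frac{1}{e_\lambda-a_\lambda}\bigl(T(a_\lambda+(e_\lambda-a_\lambda)u,\,a_\lambda+(e_\lambda-a_\lambda)v)-a_\lambda\bigr)$ on $[0,1]^2$. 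A direct check shows $T_\lambda$ is a continuous t-norm, and by construction $T_\lambda$ has no idempotent element in the open interval $(0,1)$.

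The step I expect to be the hardest is showing that each $T_\lambda$ belongs to $\mathscr{T}_{_{\mathrm{ConA}}}$, equivalently that it is Archimedean. By Lemma~\ref{Archi-Char} it suffices to prove that $x_{T_\lambda}^{(n)}\to 0$ for every $x\in(0,1)$. For fixed $x\in(0,1)$, the sequence $\{x_{T_\lambda}^{(n)}\}$ is nonincreasing in $[0,x]$ so it converges to some limit $\ell\in[0,x]$; passing to the limit in $x_{T_\lambda}^{(n+1)}=T_\lambda(x_{T_\lambda}^{(n)},x)$ and using continuity yields $T_\lambda(\ell,x)=\ell$. If $\ell>0$, an application of the absorption argument on $T_\lambda$, together with the fact that $t\mapsto T_\lambda(t,x)$ is continuous with $T_\lambda(0,x)=0$ and $T_\lambda(1,x)=x<1$, forces $\ell$ to be an idempotent element of $T_\lambda$ in $(0,1)$, contradicting the construction. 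Hence $\ell=0$, so $T_\lambda$ is Archimedean, hence lies in $\mathscr{T}_{_{\mathrm{ConA}}}$. Finally, if $\mathfrak{I}_T=[0,1]$ then $T=T_{\mathbf{M}}$ (using absorption for every pair), if $\mathfrak{I}_T=\{0,1\}$ then $T\in\mathscr{T}_{_{\mathrm{ConA}}}$, and otherwise $T$ is the ordinal sum $(\langle a_\lambda,e_\lambda,T_\lambda\rangle)_\lambda$, completing the three alternatives of (ii).
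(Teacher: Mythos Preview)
The paper does not supply its own proof of this lemma: it is quoted from the literature, and the text immediately preceding it directs the reader to \cite[Proposition~5.11]{KMP2000} or \cite[Theorem~3.49]{GMMP2009} for a detailed argument. Your outline is precisely the standard Mostert--Shields idempotent-decomposition proof that underlies those references, so there is nothing in the paper to compare against.

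That said, two steps in your write-up are under-argued. For the absorption property ($T(x,e)=x$ when $e$ is idempotent and $x\le e$), the inequality $T(x,e)\le x$ is immediate, but your reverse direction is vague; the clean argument is: since $t\mapsto T(t,e)$ is continuous on $[0,e]$ with values $0$ at $0$ and $e$ at $e$, there is $y\in[0,e]$ with $T(y,e)=x$, and then $T(x,e)=T(T(y,e),e)=T(y,T(e,e))=T(y,e)=x$. For the Archimedean step, your appeal to ``the absorption argument'' to deduce that the limit $\ell$ is idempotent from $T_\lambda(\ell,x)=\ell$ is circular as written (absorption presupposes an idempotent). Instead, simply observe that $x_{T_\lambda}^{(2n)}=T_\lambda\bigl(x_{T_\lambda}^{(n)},x_{T_\lambda}^{(n)}\bigr)$ and pass to the limit using continuity to obtain $T_\lambda(\ell,\ell)=\ell$ directly; since $T_\lambda$ has no idempotents in $(0,1)$ and $\ell\le x<1$, this forces $\ell=0$. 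With these two fixes your argument is complete and matches the standard proof in the cited sources.
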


\section{Power stability of continuous t-norms}\label{Sec-III}

Let $T$ be a continuous t-norm. Given any $n \in \mathbb{N}$, the mapping
$\varphi: [0, 1]\rightarrow [0, 1]$ defined by $\varphi(v)=v_{T}^{(n)}$ is
non-decreasing, continuous, and surjective. The pseudo-inverse $\varphi^{(-1)}$
is called the \textit{$n$-th root} w.r.t. $T$, implying the following
formula:
\begin{equation}
u_{T}^{(\frac{1}{n})}=\varphi^{(-1)}(u) =\sup\left\{v \in [0,1] \mid v_{T}^{(n)} <u\right\}.
\end{equation}
By \cite[Remark~3.4 (vii)]{KMP2000}
(also see {\textrm{\protect\cite[Proposition~3.35 (ii)]{GMMP2009}}}), we have
\begin{equation}
\label{eq-7.4}
(u_{T}^{(\frac{1}{n})})_{T}^{({n})}=u \text{ for all } u\in [0, 1].
\end{equation}
Meanwhile, if $T\in \mathscr{T}_{_{\mathrm{ConA}}}$,
it follows from~\cite[Proposition~3.35 (iv)]{GMMP2009} that
\begin{equation}
\label{eq-7.5}
\lim_{n\rightarrow +\infty} u_{T}^{(\frac{1}{n})} =
\begin{cases}
0, & u =0, \\
1, & u \in (0, 1].
\end{cases}
\end{equation}
Dually, the \textit{$n$-th root} of $u$
w.r.t. a continuous t-conorm $S$ is defined by
\begin{equation}
u_{S}^{(\frac{1}{n})} =\inf\left\{v \in[0, 1]
\mid  v_{S}^{(n)} >u\right\}.
\end{equation}
Putting, for $u \in [0, 1]$ and $p$, $q \in \mathbb{N}$,
\begin{equation}
\label{eq-7.7}
u_{T}^{(\frac{q}{p})} = (u_{T}^{(\frac{1}{p})})_{T}^{(q)},
\end{equation}
it can be verified that, for any $u\in [0, 1]$ and
$k$, $p$, $q\in \mathbb{N}$, we have $u_{T}^{(\frac{kq}{kp})}=
u_{T}^{(\frac{q}{p})}$, implying that the mapping $\Gamma: [0, 1]
\times \mathbb{Q} \rightarrow [0, 1]$ defined by $\Gamma(u, r)
=u_{T}^{(r)}$ is well-defined
(see \cite[Proposition~3.35 (iii)]{GMMP2009}).

Following the above discussions, we can extend the rational powers of
$u\in [0, 1]$ under a continuous t-norm or a continuous t-conorm to
the positive real numbers as follows.

\begin{definition}
\label{x-Power-Def}
Let $T$ be a continuous t-norm and $S$ be a continuous t-conorm. For any
$u\in [0, 1]$ and any $t> 0$, define
\begin{equation}
\label{eq-7.8}
u_{T}^{(t)} =\inf\left\{u_{T}^{(r)} \mid r\in[0, t] \cap \mathbb{Q}\right\},
\end{equation}
and
\begin{equation}
\label{eq-7.9}
u_{S}^{(t)} =\sup\left\{u_{S}^{(r)} \mid r\in[0, t] \cap \mathbb{Q}\right\}.
\end{equation}
\end{definition}
\begin{remark}\label{re-4}
By Definition \ref{x-Power-Def}, it can be verified that,
\begin{itemize}
  \item  for any $u\in [0, 1]$ and any $\frac{q}{p}\in \mathbb{Q}^{+}$,
  $u_{T}^{(\frac{q}{p})}$ defined in Definition~\ref{x-Power-Def} is equal
  to that defined by formula~\eqref{eq-7.7}.
  \item for any $u\in (0, 1]$ and any $t\in (0, +\infty)$, $u_{T_{\mathbf{P}}}^{(t)}=u^t$.
\end{itemize}
\end{remark}

\begin{lemma}
[{\textrm{\protect\cite[Remark~3.5]{KMP2000}, \cite[Proposition~3.35 (iii)]{GMMP2009}}}]
\label{le-7.8}
Assume that $T$ is a continuous t-norm and $S$ is a continuous t-conorm. Then,
for any $u\in [0, 1]$ and any $t_1$, $t_2\in\mathbb{Q}$,
\begin{enumerate}[{\rm (1)}]
\item $u_{T}^{(t_1+t_2)}=T(u_{T}^{(t_1)}, u_{T}^{(t_2)})$;
\item $u_{S}^{(t_1+t_2)}=S(u_{S}^{(t_1)}, u_{S}^{(t_2)})$.
\end{enumerate}
In particular, $u_{T}^{(\_)}$ is decreasing on $\mathbb{Q}^{+}$
and $u_{S}^{(\_)}$ is increasing on $\mathbb{Q}^{+}$.
\end{lemma}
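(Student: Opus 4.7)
The plan is to reduce both identities to a single instance of associativity by placing $t_1$ and $t_2$ over a common denominator, and then to read off the monotonicity and the dual statement from part~(1). The preparation for this is already in place: Remark~\ref{re-4} identifies the infimum-based $u_T^{(q/p)}$ with the algebraic form $(u_T^{(1/p)})_T^{(q)}$ of~\eqref{eq-7.7}, and the invariance $u_T^{(kq/kp)}=u_T^{(q/p)}$ recalled just before Definition~\ref{x-Power-Def} makes a change of denominator harmless.

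For part~(1), for $t_1,t_2\in\mathbb{Q}^{+}$ I would fix a common denominator $p$, write $t_1=q_1/p$ and $t_2=q_2/p$, and set $v:=u_T^{(1/p)}$. Appealing to Remark~\ref{re-4} together with the denominator-invariance, one immediately has
\[
u_T^{(t_1)}=v_T^{(q_1)},\qquad u_T^{(t_2)}=v_T^{(q_2)},\qquad u_T^{(t_1+t_2)}=v_T^{(q_1+q_2)}.
\]
The desired equality $v_T^{(q_1+q_2)}=T(v_T^{(q_1)},v_T^{(q_2)})$ is then a single unwinding of the $m$-ary extension of $T$, drawing only on commutativity~(T1) and associativity~(T2). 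The boundary cases in which $t_1$ or $t_2$ equals~$0$ are covered by the convention $u_T^{(0)}=1$ combined with the neutrality axiom~(T4). The monotonicity of $u_T^{(\_)}$ on $\mathbb{Q}^{+}$ then comes for free from (1): for $t_1<t_2$ in $\mathbb{Q}^{+}$, set $s:=t_2-t_1\in\mathbb{Q}^{+}$, so that $u_T^{(t_2)}=T(u_T^{(t_1)},u_T^{(s)})\le T(u_T^{(t_1)},1)=u_T^{(t_1)}$ by (T3) and (T4).

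Part~(2) admits two equally short routes. One can repeat the common-denominator argument with the infimum in~\eqref{eq-7.8} replaced by the supremum in~\eqref{eq-7.9}, (T4) replaced by (S4), and the bound $T(x,y)\le x$ replaced by $S(x,y)\ge x$; or one can transfer~(1) through the duality~\eqref{eq-7.1}, which sends the $n$-th root w.r.t.\ $T$ of $u$ onto the $n$-th root w.r.t.\ $S$ of $1-u$. I expect the only real obstacle to be the well-definedness of the rational power itself---confirming that $(u_T^{(1/p)})_T^{(q)}$ depends only on the ratio $q/p$ and coincides with the infimum in~\eqref{eq-7.8}---but this is precisely what Remark~\ref{re-4} and the invariance discussion preceding Definition~\ref{x-Power-Def} deliver. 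Once that is taken as given, the entire lemma rests on a single application of $T$'s associativity followed by one inequality of the form $T(x,y)\le x$.
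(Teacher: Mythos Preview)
The paper does not supply its own proof of this lemma: it is stated with citations to \cite[Remark~3.5]{KMP2000} and \cite[Proposition~3.35~(iii)]{GMMP2009} and then used as a black box. So there is nothing in the paper to compare your argument against line by line.

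That said, your argument is correct and is essentially the standard one. Writing $t_1=q_1/p$, $t_2=q_2/p$ over a common denominator, setting $v=u_T^{(1/p)}$, and invoking the invariance $u_T^{(kq/kp)}=u_T^{(q/p)}$ together with Remark~\ref{re-4} reduces the identity to $v_T^{(q_1+q_2)}=T(v_T^{(q_1)},v_T^{(q_2)})$, which is immediate from associativity of the $m$-ary extension. The deduction of monotonicity from~(1) via $u_T^{(t_2)}=T(u_T^{(t_1)},u_T^{(t_2-t_1)})\le u_T^{(t_1)}$ is clean, and both routes you sketch for~(2) are valid. The only caveat is that the duality route for~(2) tacitly uses the identity $u_S^{(1/p)}=1-(1-u)_T^{(1/p)}$, which the paper establishes only later (Lemma~\ref{le-7.10}); if you want to avoid any forward reference, the direct repetition of the common-denominator argument with $S$ in place of $T$ is the safer choice.
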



\begin{proposition}
\label{0-Power-Pro}
Assume that $T$ is a continuous t-norm. Then, $0_{T}^{(t)} = 0$ for all $t \in (0, +\infty)$.
\end{proposition}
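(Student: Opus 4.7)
The plan is first to compute $0_T^{(r)}$ for every positive rational $r$, showing it equals $0$, and then to pass to arbitrary $t > 0$ via the infimum in~\eqref{eq-7.8}. I would begin with the reciprocal-integer case $0_T^{(1/p)}$, $p \in \mathbb{N}$. Applying the defining formula $u_T^{(1/p)} = \sup\{v \in [0,1] \mid v_T^{(p)} < u\}$ at $u = 0$, the set $\{v : v_T^{(p)} < 0\}$ is empty since $v_T^{(p)} \in [0,1]$, so (under the usual convention $\sup\emptyset = 0$ inside $[0,1]$) one obtains $0_T^{(1/p)} = 0$. Equivalently, the continuous non-decreasing map $\varphi(v) = v_T^{(p)}$ satisfies $\varphi(0) = 0$, hence its pseudo-inverse at $y = 0$ coincides with the smallest preimage of $0$, namely~$0$.

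Next, I would extend to all positive rationals $r = q/p$. By~\eqref{eq-7.7}, $0_T^{(q/p)} = (0_T^{(1/p)})_T^{(q)} = 0_T^{(q)}$. The absorbing property $T(0,x) = 0$ for every $x \in [0,1]$ follows at once from neutrality and monotonicity, since $0 \le T(0,x) \le T(0,1) = T(1,0) = 0$. A one-line induction on $q \ge 1$ then gives $0_T^{(q)} = 0$, and hence $0_T^{(r)} = 0$ for every positive rational $r$.

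Finally, for any $t > 0$, density of $\mathbb{Q}$ in $\mathbb{R}$ supplies a rational $r \in (0, t]$, and the previously established identity gives $0_T^{(r)} = 0$, an element of the set $\{0_T^{(r')} : r' \in [0,t]\cap\mathbb{Q}\}$. Since every element of this set lies in $[0,1]$, the infimum in~\eqref{eq-7.8} is bounded below by $0$ and attained at $r$, so $0_T^{(t)} = 0$. The only delicate point is the Step~1 convention $\sup\emptyset = 0$ for the pseudo-inverse at $0$; this is implicit in the pseudo-inverse taking values in $[0,1]$, but it is the single fact worth flagging explicitly, as everything afterwards is routine application of associativity and monotonicity.
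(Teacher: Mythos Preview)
Your proposal is correct and follows essentially the same three-step structure as the paper's proof: first $0_T^{(1/p)}=\sup\varnothing=0$, then $0_T^{(q/p)}=(0_T^{(1/p)})_T^{(q)}=0_T^{(q)}=0$, and finally the infimum in~\eqref{eq-7.8} for general $t$. Your added justification that $0_T^{(q)}=0$ via the absorbing property of $0$ and your explicit flagging of the $\sup\varnothing=0$ convention are welcome clarifications but do not change the argument.
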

\begin{proof}
Based on Definition~\ref{x-Power-Def}, consider the following three cases:

(1)  For any $p \in\mathbb{N}$, $0_{T}^{(\frac{1}{p})} =\sup\{v \in [0, 1]
\mid v_{T}^{(p)} <0\} =\sup \varnothing =0$.

(2) For any $\frac{q}{p}\in\mathbb{Q}^+$, according to the above discussion,
it follows that $0_{T}^{(\frac{q}{p})}=(0_{T}^{(\frac{1}{p})})_{T}^{(q)}=0_{T}^{(q)}=0$.

(3)  For any $t\in (0, +\infty)\backslash \mathbb{Q}$, $0_{T}^{(t)}=\inf\{0_{T}^{(r)}
\mid r\in [0, t]\cap \mathbb{Q}\}=0$.
\end{proof}

Recently, Koles\'{a}rov\'{a} et al.~(\cite{KMR2014}) introduced the
notion of power stability. According to that, an aggregation function
$\textsf{Agg}: [0, 1]^2\rightarrow [0, 1]$ is said to be \textit{power stable}
if, for all $t>0$ and all $(u, v)\in [0, 1]^2$, there holds
\begin{equation}
(\textsf{Agg}(u, v))^t =\textsf{Agg}(u^t, v^t).
\end{equation}

Motivated by this, we introduce power stability for t-norms.
Dually, we give a definition for t-conorms.

\begin{definition}
Let $T$ be a continuous t-norm and $u\in [0, 1]$. Then,
\begin{enumerate}[(1)]
  \item $T$ is \textit{power stable} if, for all $t>0$
  and all $(u, v)\in [0, 1]^2$, there holds
\begin{align}
\label{eq-Power-Stab}
  (T(u, v))_{T}^{(t)} =T(u_{T}^{(t)},
  v_{T}^{(t)}). 
\end{align}

  \item $u$ is \textit{power stable} or $u$ is a
  \textit{power stable point} if, for any $t_1$, $t_2 >0$, we have
  $(u_{T}^{(t_1)})_{T}^{(t_2)}=u_{T}^{(t_1t_2)}.$
Since both $0$ and $1$ are power stable points for each continuous
t-norm $T$, which are called \textit{trivial power stable points} of $T$,
and each power stable point in $(0, 1)$ is called a \textit{non-trivial
power stable point} of $T$.
\end{enumerate}
\end{definition}

In general, a continuous t-norm is not necessarily power stable
(see Example~\ref{ex-7.1}). In the following, we will derive an
equivalent characterization for power stability of t-norms. In particular,
we prove that a continuous t-norm is power stable if and only if every point
in $[0, 1]$ is a power stable point, and if and only if $T=T_{\mathbf{M}}$
or $T$ is strict, or $T$ is representable as an ordinal sum of
strict t-norms (see Theorem~\ref{Power-Stab-Char}). 

\begin{example}\label{ex-7.1}
Define $G:[0, 1]\rightarrow [0, 1]$ as $G(x)=1-x$ and choose the
t-norm $T$ defined by $T(u, v)=G^{(-1)}(G(u)+G(v))$.
Clearly, $T\in \mathscr{T}_{_{\mathrm{ConA}}}$ and $T(\frac{1}{2}, \frac{1}{2})=0$.
Meanwhile, by direct calculation, we have $(\frac{1}{2})_{T}^{(\frac{1}{2})}=G^{-1}
(\frac{1}{4})=\frac{3}{4}$ (also see Lemma~\ref{Power-1/n-Thm}), implying that
$T((\frac{1}{2})_{T}^{(\frac{1}{2})}, (\frac{1}{2})_{T}^{(\frac{1}{2})})
=\frac{1}{2}\neq 0=(T(\frac{1}{2} , \frac{1}{2}))_{T}^{(\frac{1}{2})}$.
\end{example}

\begin{proposition}\label{pr-7.3}
Let $T$ be a continuous t-norm and $u \in \mathfrak{I}_T$.
Then, for any $t \in (0, +\infty)$, $u_{T}^{(t)} = u$.
\end{proposition}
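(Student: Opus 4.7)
The plan is to climb the ladder $n \in \mathbb{N} \Rightarrow 1/n \Rightarrow q/p \Rightarrow t$ irrational, using idempotency at every rung.

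First, I would check the integer powers: since $T(u,u) = u$ by idempotency, a trivial induction using associativity of $T$ gives $u_T^{(n)} = u$ for every $n \in \mathbb{N}$. Next, I would handle the reciprocals: by the very definition of the $n$-th root,
\begin{equation*}
u_T^{(1/n)} = \sup\left\{v \in [0,1] \mid v_T^{(n)} < u\right\}.
\end{equation*}
The key observation here is that for every $v \in [0,1]$ one has $v_T^{(n)} \leq v$ (an easy induction from $T(v,1)=v$ and monotonicity). Combined with monotonicity of $T$ in each argument, this yields two inclusions: every $v < u$ satisfies $v_T^{(n)} \leq v < u$ and so lies in the set, while every $v > u$ satisfies $v_T^{(n)} \geq u_T^{(n)} = u$ and so is excluded. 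Therefore the supremum equals $u$, giving $u_T^{(1/n)} = u$.

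From here the rational case is routine: invoking formula~\eqref{eq-7.7} and the previous two steps,
\begin{equation*}
u_T^{(q/p)} = \bigl(u_T^{(1/p)}\bigr)_T^{(q)} = u_T^{(q)} = u
\end{equation*}
for every $q/p \in \mathbb{Q}^+$. For the final extension to an arbitrary $t \in (0,+\infty)$, I would appeal directly to Definition~\ref{x-Power-Def}: the set $\{u_T^{(r)} \mid r \in [0,t]\cap \mathbb{Q}\}$ consists of the value $u_T^{(0)} = 1$ together with $u_T^{(r)} = u$ for every positive rational $r \leq t$ (note $[0,t]\cap \mathbb{Q}^+ \neq \varnothing$). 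Its infimum is thus $\min\{1, u\} = u$, so $u_T^{(t)} = u$.

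The main obstacle is the step $u_T^{(1/n)} = u$, since it is the only place the pseudo-inverse actually has to be evaluated; everything else is bookkeeping. That step hinges on the uniform bound $v_T^{(n)} \leq v$, which turns the defining strict inequality $v_T^{(n)} < u$ into the concrete inclusion $[0,u) \subseteq \{v : v_T^{(n)} < u\} \subseteq [0,u]$, pinning the supremum at $u$ without needing continuity of $T$ explicitly at this step.
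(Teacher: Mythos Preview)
Your proposal is correct and follows essentially the same route as the paper: the paper also climbs the ladder $n \to 1/n \to q/p \to t$, using $v_T^{(p)} \leq v$ together with monotonicity to pin down the set $\{v : v_T^{(p)} < u\}$ as $[0,u)$ in the reciprocal step, and then unwinds the rational and irrational cases exactly as you do. The only cosmetic difference is that the paper separates out the case $u=0$ by invoking Proposition~\ref{0-Power-Pro}, whereas your sandwich $[0,u)\subseteq\{v:v_T^{(n)}<u\}\subseteq[0,u]$ handles it uniformly; your explicit bookkeeping of the $r=0$ term in the infimum is also a nice touch the paper leaves implicit.
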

\begin{proof}
If $u=0$, it has been proven in Proposition~\ref{0-Power-Pro}.
If $u\in\mathfrak{I}_T \backslash \{0\}$, consider the following three cases:

(1) For any $p\in \mathbb{N}$, from $u\in \mathfrak{I}_T$,
  it follows that $u_{T}^{(p)}=u$. Meanwhile, it can be verified that, for any
$v\in [0, u)$, $v_{T}^{(p)} \leq v<u$, implying that
 $u_{T}^{(\frac{1}{p})}=\sup\{v \in [0,1] \mid v_{T}^{(p)}<u\}
 =\sup\{v \mid 0\leq v<u\}=u.$

(2) For any $\frac{q}{p}\in\mathbb{Q}^+$, according
  to the above discussion, by $u \in \mathfrak{I}_T$, we have
  $u_{T}^{(\frac{q}{p})}=(u_{T}^{(\frac{1}{p})})_{T}^{(q)}=u_{T}^{(q)}=u$.

(3) For any $t\in (0, +\infty)\backslash \mathbb{Q}$, by (2),
we have $u_{T}^{(t)}=\inf\{u_{T}^{(r)} \mid r \in [0, t]\cap \mathbb{Q}\}=u$.
\end{proof}

Directly from Definition~\ref{x-Power-Def} and monotonicity of
t-(co)norms, we get the following result.

\begin{proposition}
\label{Power-Increasing-Pro-1}
Assume that $T$ is a continuous t-norm and $S$ is a continuous t-conorm. Then,
for $t\in (0, +\infty)$ and $u$, $v\in [0, 1]$ with $u \leq v$, we have
$u_{T}^{(t)}\leq v_{T}^{(t)}$ and $u_{S}^{(t)}\leq v_{S}^{(t)}$ .
\end{proposition}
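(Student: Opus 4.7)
The plan is to propagate monotonicity in $u$ through the three layers of the definition of $u_T^{(t)}$: the unit reciprocal case $t=1/n$ with $n\in\mathbb{N}$, the rational case $t=q/p\in\mathbb{Q}^{+}$, and finally arbitrary real $t>0$. The t-conorm statement will follow by dualising each step.

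First, for $t=1/n$, I would exploit the defining formula $u_{T}^{(1/n)}=\sup\{w\in[0,1]\mid w_{T}^{(n)}<u\}$. From $u\leq v$ it is immediate that $\{w\in[0,1]\mid w_{T}^{(n)}<u\}\subseteq\{w\in[0,1]\mid w_{T}^{(n)}<v\}$, so taking suprema yields $u_{T}^{(1/n)}\leq v_{T}^{(1/n)}$. Next, for $t=q/p\in\mathbb{Q}^{+}$, formula~\eqref{eq-7.7} gives $u_{T}^{(q/p)}=(u_{T}^{(1/p)})_{T}^{(q)}$. Since the previous step yields $u_{T}^{(1/p)}\leq v_{T}^{(1/p)}$, an iterated application of the monotonicity axiom (T3) of $T$ (applied $q-1$ times through the recursive definition of $T^{(q)}$) gives $u_{T}^{(q/p)}\leq v_{T}^{(q/p)}$.

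Finally, for an arbitrary $t\in(0,+\infty)$, Definition~\ref{x-Power-Def} writes
\begin{equation*}
u_{T}^{(t)}=\inf\{u_{T}^{(r)}\mid r\in[0,t]\cap\mathbb{Q}\},\qquad v_{T}^{(t)}=\inf\{v_{T}^{(r)}\mid r\in[0,t]\cap\mathbb{Q}\}.
\end{equation*}
By the first two steps, $u_{T}^{(r)}\leq v_{T}^{(r)}$ for every $r\in[0,t]\cap\mathbb{Q}$, so for each such $r$ we have $\inf_{s}u_{T}^{(s)}\leq u_{T}^{(r)}\leq v_{T}^{(r)}$; taking the infimum over $r$ on the right gives $u_{T}^{(t)}\leq v_{T}^{(t)}$. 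The t-conorm half is identical with the roles of $\sup$ and $\inf$ interchanged: Step 1 uses $u_{S}^{(1/n)}=\inf\{w\in[0,1]\mid w_{S}^{(n)}>u\}$ (here the set shrinks as $u$ grows, and the infimum grows), Step 2 is again iterated monotonicity of $S$, and Step 3 replaces the infimum by the supremum in~\eqref{eq-7.9}.

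There is no serious mathematical obstacle here; the only thing requiring care is bookkeeping across the three layers of the definition, and in particular making sure the direction of monotonicity survives each sup/inf operation. The one point to state explicitly is that the monotonicity of the $q$-fold composition $T^{(q)}$ in each of its arguments is needed in Step 2, but this is a direct consequence of (T3) via the recursive definition $T^{(m)}(x_{1},\dots,x_{m})=T(T^{(m-1)}(x_{1},\dots,x_{m-1}),x_{m})$.
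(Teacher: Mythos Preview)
Your proof is correct and follows exactly the route the paper indicates: the paper simply states that the result follows ``directly from Definition~\ref{x-Power-Def} and monotonicity of t-(co)norms'' without spelling out details, and your three-layer argument (roots, rationals, reals via the defining infimum/supremum) is precisely the unpacking of that one-line justification.
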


\begin{lemma}
\label{Power-Dual-Thm}
Assume that $T$ is a continuous t-norm and $S$ is the dual t-conorm of $T$.
Then, for any $u\in [0, 1]$ and any $p$, $q\in \mathbb{N}$,
\begin{enumerate}[{\rm (i)}]
\item $u_{T}^{(p)} =1 -(1 -u)_{S}^{(p)}$ and $u_{S}^{(p)} =1 -(1 -u)_{T}^{(p)} $;

\item $u_{T}^{(\frac{1}{p})} =\min\{v \in[0,1] \mid v_{T}^{(p)} =u\}$
and $u_{S}^{(\frac{1}{p})} =\max\{v \in[0,1] \mid v_{S}^{(p)} =u\}$;

\item $(u_{T}^{(p)})_{T}^{(\frac{1}{p})}\leq u$ and $(u_{S}^{(p)})_{S}^{(\frac{1}{p})}\geq u$;

\item $u_{T}^{(\frac{q}{p})}\geq(u_{T}^{(q)})_{T}^{(\frac{1}{p})}$ and
$u_{S}^{(\frac{q}{p})}\leq(u_{S}^{(q)})_{S}^{(\frac{1}{p})}$ .
\end{enumerate}
\end{lemma}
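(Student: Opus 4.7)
The plan is to handle parts (i)--(iv) in order, establishing each t-norm statement first and then transferring to its t-conorm counterpart via the duality $T(x,y)=1-S(1-x,1-y)$.

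For (i), I would induct on $p$. The base case $p=1$ is immediate. For the inductive step, combining the duality formula with the inductive hypothesis and the recursive definition of $S$-powers gives
\[
u_{T}^{(p+1)} = T\bigl(u_{T}^{(p)}, u\bigr) = 1 - S\bigl(1 - u_{T}^{(p)},\, 1 - u\bigr) = 1 - S\bigl((1-u)_{S}^{(p)},\, 1 - u\bigr) = 1 - (1-u)_{S}^{(p+1)}.
\]
The companion identity follows by swapping the roles of $T$ and $S$ in the same argument.

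For (ii), fix $u\in[0,1]$ and consider $\varphi:[0,1]\to[0,1]$, $\varphi(v)=v_{T}^{(p)}$, which is continuous, non-decreasing, and surjective (as recalled at the start of Section~\ref{Sec-III}). Consequently the sublevel set $\{v\in[0,1]:\varphi(v)<u\}$ is a (possibly empty) half-open initial interval $[0,c)$, and by continuity of $\varphi$ its supremum $c$ coincides with $\min\{v:\varphi(v)=u\}$. Since $u_{T}^{(1/p)}$ is by definition this supremum, the claim follows, and the t-conorm version is obtained dually. Part (iii) is then immediate: applying (ii) with $u_{T}^{(p)}$ in place of $u$ yields $(u_{T}^{(p)})_{T}^{(1/p)}=\min\{v:v_{T}^{(p)}=u_{T}^{(p)}\}$, and $u$ itself belongs to this set.

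For (iv), set $w=u_{T}^{(1/p)}$, so that $w_{T}^{(p)}=u$ by~\eqref{eq-7.4} and $u_{T}^{(q/p)}=w_{T}^{(q)}$ by~\eqref{eq-7.7}. The decisive calculation, using associativity of $T$, is
\[
\bigl(w_{T}^{(q)}\bigr)_{T}^{(p)} = w_{T}^{(pq)} = \bigl(w_{T}^{(p)}\bigr)_{T}^{(q)} = u_{T}^{(q)},
\]
so $w_{T}^{(q)}$ lies in the fibre $\{v:v_{T}^{(p)}=u_{T}^{(q)}\}$, whose minimum is $(u_{T}^{(q)})_{T}^{(1/p)}$ by (ii). Hence $u_{T}^{(q/p)}=w_{T}^{(q)}\geq (u_{T}^{(q)})_{T}^{(1/p)}$, and the dual inequality is obtained via the t-conorm form of (ii). I expect (iv) to be the main obstacle: equality typically fails (this is precisely the power-instability phenomenon that motivates the next section), and it is easy to pick the wrong direction. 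The cleanest perspective is to read $(u_{T}^{(q)})_{T}^{(1/p)}$ through (ii) as the \emph{minimum} of a fibre of the $p$-fold self-composition, and then to exhibit $(u_{T}^{(1/p)})_{T}^{(q)}$ as an explicit element of that fibre.
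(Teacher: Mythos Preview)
Your proposal is correct and follows essentially the same route as the paper: part~(i) by duality (the paper just says ``directly from the duality'', which your induction unpacks), part~(ii) by identifying the supremum defining $u_{T}^{(1/p)}$ with the minimum of the fibre $\{v:v_{T}^{(p)}=u\}$ via continuity and monotonicity of $\varphi$, part~(iii) as an immediate consequence of~(ii), and part~(iv) via the same fibre argument---showing $(u_{T}^{(1/p)})_{T}^{(q)}$ lies in $\{v:v_{T}^{(p)}=u_{T}^{(q)}\}$ and invoking~(ii). The paper's computation in~(iv) is literally your displayed chain with $w$ replaced by $u_{T}^{(1/p)}$.
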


\begin{proof}
(i) It follows directly from the duality between $T$ and $S$.

(ii) For $u=0$, it is clear that $0_{T}^{(\frac{1}{p})}=\sup\{v\in [0, 1]
\mid v_{T}^{(p)}<0\}=\sup \varnothing=0=\min\{v \in[0, 1] \mid v_{T}^{(p)}=0\}$.
For $u\in (0, 1]$, it is clear that $\mathscr{M}^{-}=\{v \in [0,1]
\mid v_{T}^{(p)}<u\}\neq\varnothing$ and that
$\mathscr{M}=\{v\in [0, 1] \mid v_{T}^{(p)}=u\}$ is a nonempty
closed subset of $[0, 1]$, since the function $(\_)_{T}^{(p)}$
is continuous and increasing with $\mathrm{Ran}((\_)_{T}^{(p)})=[0, 1]$.
Since $\mathscr{M}$ is closed, $\min\mathscr{M}$ exists, and denoted by
$\xi=\min\mathscr{M}$. Clearly, $\xi_{T}^{(p)}=u$ and $\xi \leq u_{T}^{(\frac{1}{p})}$
by formula~\eqref{eq-7.4}. For any $v \in\mathscr{M}^{-}$,
since $(\_)_{T}^{(p)}$ is increasing, we have $\xi \geq v$, and thus
$\xi \geq \sup\mathscr{M}^{-}=u_{T}^{(\frac{1}{p})} \geq \xi$, i.e.,
$u_{T}^{(\frac{1}{p})}=\min\{v \in [0, 1] \mid v_{T}^{(p)}=u\}$.

Similarly, we can prove $u_{S}^{(\frac{1}{p})} =\max\{v \in [0,1] \mid
v_{S}^{(p)} =u\}$.

(iii) It follows directly from $\mathrm{(ii)}$ that {$(u_{T}^{(p)})_{T}^{(\frac{1}{p})}
 =\min\{v \in [0, 1] \mid v_{T}^{(p)} =u_{T}^{(p)}\}  \leq u$} and
$(u_{S}^{(p)})_{S}^{(\frac{1}{p})} =\max\{v \in [0, 1] \mid v_{S}^{(p)} =u_{S}^{(p)}\}
 \geq u$.

(iv)  By direct calculation, we obtain
$(u_{T}^{(\frac{q}{p})})_{T}^{p}
 =((u_{T}^{(\frac{1}{p})})_{T}^{(q)})_{T}^{(p)}
 =(u_{T}^{(\frac{1}{p})})_{T}^{(pq)}
 =((u_{T}^{(\frac{1}{p})})_{T}^{(p)})_{T}^{(q)}
 =u_{T}^{(q)}$,
implying that $u_{T}^{(\frac{q}{p})} \in \{v\in[0, 1] \mid v_{T}^{(p)}=u_{T}^{(q)}\}$,
and thus $u_{T}^{(\frac{q}{p})} \geq \min\{v\in[0,1]  \mid v_{T}^{(p)}=u_{T}^{(q)}\}
=(u_{T}^{(q)})_{T}^{(\frac{1}{p})}$ by (ii). Similarly, one can prove
$u_{S}^{(\frac{q}{p})}\leq (u_{S}^{(q)})_{S}^{(\frac{1}{p})}$.
\end{proof}

\begin{remark}
For the continuous t-norm $T$ defined in Example \ref{ex-7.1}, we have
$((\frac{1}{2})_{T}^{(2)})_{T}^{(\frac{1}{2})} =0 <\frac{1}{2} =
((\frac{1}{2})_{T}^{(\frac{1}{2})})_{T}^{(2)}$.
This means that Lemma \ref{Power-Dual-Thm} (iii) and (iv) may strictly hold.
\end{remark}
\begin{lemma}\label{le-7.10}
Assume that $T$ is a continuous t-norm and $S$ is the dual t-conorm of $T$.
Then, for any $u\in [0, 1]$ and any $p\in \mathbb{N}$, we have
\begin{equation}
\label{eq-7.15}
u_{S}^{(\frac{1}{p})} =1 -(1 -u)_{T}^{(\frac{1}{p})},
\end{equation}
and
\begin{equation}
\label{eq-7.16}
u_{T}^{(\frac{1}{p})} =1 -(1 -u)_{S}^{(\frac{1}{p})}.
\end{equation}
\end{lemma}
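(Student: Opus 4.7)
The plan is to reduce both identities to the finite-iterate duality already recorded in Lemma~\ref{Power-Dual-Thm} by using the min/max representations of the $p$-th root. Since $S$ is the dual t-conorm of $T$, the map $u \mapsto 1-u$ conjugates $T$ and $S$, so we should expect the $p$-th root operations to be exchanged as well, with min and max swapped due to the order-reversal.

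First, I would fix $u \in [0,1]$ and $p \in \mathbb{N}$ and apply Lemma~\ref{Power-Dual-Thm}(ii) to rewrite
\[
(1-u)_{T}^{(1/p)} = \min\bigl\{v \in [0,1] \mid v_{T}^{(p)} = 1-u\bigr\}.
\]
Next I would perform the change of variable $v = 1 - w$; then $v \in [0,1]$ iff $w \in [0,1]$, and Lemma~\ref{Power-Dual-Thm}(i) gives
\[
v_{T}^{(p)} = (1-w)_{T}^{(p)} = 1 - w_{S}^{(p)},
\]
so the condition $v_{T}^{(p)} = 1-u$ is equivalent to $w_{S}^{(p)} = u$. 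Because $w \mapsto 1-w$ is an order-reversing bijection of $[0,1]$, taking the minimum over $v$ becomes taking the maximum over $w$:
\[
(1-u)_{T}^{(1/p)} = \min\bigl\{1 - w \mid w_{S}^{(p)} = u\bigr\} = 1 - \max\bigl\{w \in [0,1] \mid w_{S}^{(p)} = u\bigr\}.
\]
Applying Lemma~\ref{Power-Dual-Thm}(ii) again identifies the remaining maximum with $u_{S}^{(1/p)}$, yielding
\[
(1-u)_{T}^{(1/p)} = 1 - u_{S}^{(1/p)},
\]
which is exactly~\eqref{eq-7.15} after rearrangement.

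Finally, \eqref{eq-7.16} follows from~\eqref{eq-7.15} by the symmetric argument (swap the roles of $T$ and $S$, noting that $T$ is also the dual of $S$), or equivalently by substituting $u \mapsto 1-u$ in~\eqref{eq-7.15} and solving for $u_{T}^{(1/p)}$. No step is truly delicate: the only thing to be careful about is the min/max flip induced by the order-reversing substitution, and the fact that Lemma~\ref{Power-Dual-Thm}(ii) guarantees both extrema are attained (so the manipulation of $\min$ and $\max$ is legitimate rather than a $\sup$/$\inf$ argument requiring continuity estimates).
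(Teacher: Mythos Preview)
Your argument is correct. Both you and the paper rely on exactly the same two ingredients from Lemma~\ref{Power-Dual-Thm}: part~(i) for the duality $v_{T}^{(p)}=1-(1-v)_{S}^{(p)}$ and part~(ii) for the $\min$/$\max$ descriptions of the $p$-th roots. The difference is only in packaging: the paper sets $\sigma=u_{S}^{(1/p)}$ and $\vartheta=1-(1-u)_{T}^{(1/p)}$ and proves $\vartheta\le\sigma$ and $\sigma\le\vartheta$ as two separate inequalities (first by checking $\vartheta_{S}^{(p)}=u$ and invoking the $\max$-description, then by checking $(1-\sigma)_{T}^{(p)}=1-u$ and invoking the $\min$-description), whereas you collapse both halves into a single step by exhibiting the order-reversing bijection $v\mapsto 1-v$ between the two solution sets. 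Your route is a shade more economical; the paper's is perhaps more explicit about why each inequality holds, but neither buys anything the other lacks.
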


\begin{proof}
For convenience, denote $u_{S}^{(\frac{1}{p})}  =\sigma$ and
$1 -(1 -u)_{_T}^{(\frac{1}{p})} =\vartheta$. First, it can be verified that
$\vartheta_{S}^{(p)}
=S^{(p)}(\vartheta, \ldots, \vartheta)=1-T^{(p)}(1-\vartheta, \ldots, 1-\vartheta)
=1-T^{(p)}((1-u)_{T}^{(\frac{1}{p})}, \ldots, (1-u)_{T}^{(\frac{1}{p})})
=1-((1-u)_{T}^{(\frac{1}{p})})_{T}^{(p)}=u$.
This, together with Lemma~\ref{Power-Dual-Thm} (ii), implies that
$\vartheta\leq u_{S}^{(\frac{1}{p})}=\sigma.$

Second, from $u_{S}^{(\frac{1}{p})} =\sigma$, we have
$u =\sigma_{S}^{(p)} =S^{(p)}(\sigma, \ldots, \sigma) =1 -T^{(p)}(1 -\sigma,
\ldots, 1 -\sigma) =1 -(1 -\sigma)_{T}^{(p)}$,
i.e., $(1-\sigma)_{T}^{(p)} = 1 -u$. This, together with Lemma~\ref{Power-Dual-Thm} (ii),
implies that $1-\sigma \geq (1-u)_{T}^{(\frac{1}{p})}=1-\vartheta$, and
thus $\sigma \leq \vartheta$. Therefore, $\sigma=\vartheta$. Similarly, we have
$u_{T}^{(\frac{1}{p})}=1-(1-u)_{S}^{(\frac{1}{p})}$.
\end{proof}

\begin{lemma}\label{le-7.11}
Assume that $T$ is a continuous t-norm and $S$ is the dual t-conorm of $T$.
Then, for any $u \in [0, 1]$ and any $\frac{q}{p}\in \mathbb{Q}^{+}$, we have
\begin{equation}
u_{S}^{(\frac{q}{p})} =1 -(1 -u)_{T}^{(\frac{q}{p})},
\end{equation}
and
\begin{equation}
u_{T}^{(\frac{q}{p})} =1 -(1 -u)_{S}^{(\frac{q}{p})}.
\end{equation}
\end{lemma}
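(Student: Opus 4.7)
The plan is a short algebraic chain that bootstraps from the two ``building block'' dualities already proved in the excerpt: the integer-power duality from Lemma~\ref{Power-Dual-Thm}(i), and the unit-fraction-power duality from Lemma~\ref{le-7.10}. Combined with the definitional identity $u_{T}^{(q/p)}=(u_{T}^{(1/p)})_{T}^{(q)}$ of formula~\eqref{eq-7.7} (and its $S$-analogue), the rational-power duality should fall out immediately.

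Concretely, to prove the first identity I will fix $u\in[0,1]$ and $\frac{q}{p}\in\mathbb{Q}^+$, and start from
\[
u_{S}^{(q/p)} \;=\; \bigl(u_{S}^{(1/p)}\bigr)_{S}^{(q)}
\]
using formula~\eqref{eq-7.7} applied to $S$. Next I apply Lemma~\ref{le-7.10} to replace $u_{S}^{(1/p)}$ by $1-(1-u)_{T}^{(1/p)}$, so that the argument of the outer $(\ )_{S}^{(q)}$ becomes $1-(1-u)_{T}^{(1/p)}$. Then I invoke Lemma~\ref{Power-Dual-Thm}(i) in the form $w_{S}^{(q)}=1-(1-w)_{T}^{(q)}$ with $w=1-(1-u)_{T}^{(1/p)}$; this gives
\[
u_{S}^{(q/p)} \;=\; 1-\bigl((1-u)_{T}^{(1/p)}\bigr)_{T}^{(q)} \;=\; 1-(1-u)_{T}^{(q/p)},
\]
where the final equality is once again formula~\eqref{eq-7.7}, now applied to $T$. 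The second identity $u_{T}^{(q/p)} =1-(1-u)_{S}^{(q/p)}$ is obtained by exactly the same three-step dualization with the roles of $T$ and $S$ interchanged, using the companion statements in Lemmas~\ref{Power-Dual-Thm}(i) and~\ref{le-7.10}; alternatively, it follows by substituting $1-u$ for $u$ in the first identity and rearranging.

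There is no genuine obstacle here: the proof is purely a bookkeeping exercise chaining the integer-power duality with the unit-fraction duality through the associativity-based definition of rational powers. The only point requiring a bit of care is to make sure the substitution $w=1-(1-u)_{T}^{(1/p)}$ indeed lies in $[0,1]$ so that Lemma~\ref{Power-Dual-Thm}(i) applies, which is automatic from $(1-u)_{T}^{(1/p)}\in[0,1]$; everything else is straightforward symbol pushing.
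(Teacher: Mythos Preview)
Your proposal is correct and essentially identical to the paper's own proof: both start from $u_{S}^{(q/p)}=(u_{S}^{(1/p)})_{S}^{(q)}$, apply the unit-fraction duality of Lemma~\ref{le-7.10} and the integer-power duality of Lemma~\ref{Power-Dual-Thm}(i), and collapse via formula~\eqref{eq-7.7}. The only cosmetic difference is that the paper writes out $w_{S}^{(q)}=1-(1-w)_{T}^{(q)}$ explicitly as $S^{(q)}(w,\ldots,w)=1-T^{(q)}(1-w,\ldots,1-w)$ before substituting, whereas you invoke Lemma~\ref{Power-Dual-Thm}(i) directly.
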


\begin{proof}
By Lemma~\ref{le-7.10}, we have
\begin{align*}
u_{S}^{(\frac{q}{p})}
& =(u_{S}^{(\frac{1}{p})})_{S}^{(q)} =S^{(q)}(u_{S}^{(\frac{1}{p})}, \ldots, u_{S}^{(\frac{1}{p})})\\
& =1 -T^{(q)}(1 -u_{S}^{(\frac{1}{p})}, \ldots, 1 -u_{S}^{(\frac{1}{p})})\\
& =1 -T^{(q)}((1 -u)_{T}^{(\frac{1}{p})}, \ldots, (1 -u)_{T}^{(\frac{1}{p})})\\
& =1 -\Big((1 -u)_{T}^{(\frac{1}{p})}\Big)_{T}^{(q)} =1 -(1 -u)_{T}^{(\frac{q}{p})},
\end{align*}
and
\begin{align*}
u_{T}^{(\frac{q}{p})}
& =(u_{T}^{(\frac{1}{p})})_{T}^{(q)} =T^{(q)}(u_{T}^{(\frac{1}{p})}, \ldots, u_{T}^{(\frac{1}{p})})\\
& =1 -S^{(q)}(1 -u_{T}^{(\frac{1}{p})}, \ldots, 1 -u_{T}^{(\frac{1}{p})})\\
& =1 -S^{(q)}((1 -u)_{S}^{(\frac{1}{p})}, \ldots, (1 -u)_{S}^{(\frac{1}{p})})\\
& =1 -\Big((1 -u)_{S}^{(\frac{1}{p})}\Big)_{S}^{(q)}=1 -(1 -u)_{S}^{(\frac{q}{p})}.
\end{align*}
\end{proof}

\begin{theorem}
\label{Power-Dual-Thm-Main}
Assume that $T$ is a continuous t-norm and $S$ is the dual t-conorm of $T$.
Then, for any $u\in [0, 1]$ and any $t\in (0, +\infty)$, we have
\begin{equation}
u_{S}^{(t)} =1 -(1 -u)_{T}^{(t)},
\end{equation}
and
\begin{equation}
u_{T}^{(t)} =1 -(1 -u)_{S}^{(t)}.
\end{equation}
\end{theorem}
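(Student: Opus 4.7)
The plan is to bootstrap the rational-exponent identities of Lemma~\ref{le-7.11} to arbitrary real $t > 0$ by unpacking the defining $\sup$/$\inf$ over rational exponents given in Definition~\ref{x-Power-Def}. The two equations to be proved are dual to one another under the substitution $u \leftrightarrow 1 - u$ (since $T$ is itself the dual t-norm of its dual t-conorm $S$), so it is enough to establish one of them and obtain the other by replacement.

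Starting from Definition~\ref{x-Power-Def}, I would write
\begin{equation*}
u_{S}^{(t)} = \sup\bigl\{u_{S}^{(r)} \mid r \in [0, t] \cap \mathbb{Q}\bigr\},
\end{equation*}
and then substitute the rational-exponent identity $u_{S}^{(r)} = 1 - (1 - u)_{T}^{(r)}$ supplied by Lemma~\ref{le-7.11} for each $r \in (0, t] \cap \mathbb{Q}$; the endpoint $r = 0$ is handled separately by the conventions $u_{S}^{(0)} = 0$ and $u_{T}^{(0)} = 1$, which make the target identity trivial at that point. Using the elementary fact that the affine reflection $x \mapsto 1 - x$ interchanges $\sup$ and $\inf$, I obtain
\begin{equation*}
u_{S}^{(t)} = 1 - \inf\bigl\{(1 - u)_{T}^{(r)} \mid r \in [0, t] \cap \mathbb{Q}\bigr\} = 1 - (1 - u)_{T}^{(t)},
\end{equation*}
where the last equality is precisely Definition~\ref{x-Power-Def} applied to $1 - u$. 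Substituting $1 - u$ in place of $u$ in this identity yields the companion equation $u_{T}^{(t)} = 1 - (1 - u)_{S}^{(t)}$.

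I foresee no serious obstacle: the analytical content has already been packaged into Lemmas~\ref{le-7.10}--\ref{le-7.11} for rational exponents, so what remains is a single short manipulation of $\sup$/$\inf$ under the reflection $x \mapsto 1 - x$. The only bookkeeping required is to verify that the index set $[0, t] \cap \mathbb{Q}$ is unchanged on both sides and to treat the endpoint $r = 0$ by the stated conventions, since that value lies outside the hypothesis $\mathbb{Q}^{+}$ of Lemma~\ref{le-7.11} but is included in the infimum/supremum of Definition~\ref{x-Power-Def}.
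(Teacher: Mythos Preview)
Your proposal is correct and follows essentially the same route as the paper's proof: invoke Lemma~\ref{le-7.11} for rational exponents, then pass to arbitrary $t>0$ via the $\sup$/$\inf$ in Definition~\ref{x-Power-Def}, using that the reflection $x\mapsto 1-x$ swaps $\sup$ and $\inf$. The only cosmetic differences are that you treat the endpoint $r=0$ explicitly (the paper silently ignores it, which is harmless since $u_{S}^{(0)}=0$ and $(1-u)_{T}^{(0)}=1$ never affect the relevant $\sup$/$\inf$) and that you obtain the second identity by the substitution $u\leftrightarrow 1-u$ rather than repeating the computation; both are perfectly fine.
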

\begin{proof}
If $t\in \mathbb{Q}$, this has been proven in Lemma~\ref{le-7.11}.
If $t\in (0, +\infty)\backslash \mathbb{Q}$, by Lemma~\ref{le-7.11}
and formulas~\eqref{eq-7.8} and \eqref{eq-7.9}, we have
\begin{align*}
u_{S}^{(t)}
&=\sup \left\{u_{S}^{(r)} \mid r\in [0, t]\cap \mathbb{Q}\right\} \\
&=\sup \left\{1-(1-u)_{T}^{(r)} \mid r\in [0, t] \cap \mathbb{Q}\right\} \\
&=1-\inf \left\{(1-u)_{T}^{(r)} \mid r\in [0, t] \cap \mathbb{Q}\right\} \\
&=1-(1-u)_{T}^{(t)},
\end{align*}
and
\begin{align*}
u_{T}^{(t)}
&=\inf \left\{u_{_T}^{(r)} \mid r\in [0, t]\cap \mathbb{Q}\right\} \\
&=\inf \left\{1-(1-u)_{S}^{(r)} \mid r\in [0, t]\cap \mathbb{Q}\right\} \\
&=1-\sup \left\{(1-u)_{S}^{(r)} \mid r\in [0, t]\cap \mathbb{Q}\right\} \\
&=1-(1-u)_{S}^{(t)}.
\end{align*}
\end{proof}

\begin{theorem}
\label{Power-Limit-Thm}
Let $T\in \mathscr{T}_{_{\mathrm{ConA}}}$
and $u\in [0, 1]$. Then,
\begin{enumerate}[{\rm (1)}]
\item $u_{T}^{(\_)}$ is a continuous function on $(0, +\infty)$;
\item $\lim_{t\to +\infty} u_{T}^{(t)} =
\begin{cases}
0, & u  \in [0, 1), \\
1, & u =1.
\end{cases}$
\end{enumerate}
\end{theorem}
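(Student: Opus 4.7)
The plan is to reduce both claims to a closed-form expression for $u_T^{(t)}$ in terms of a continuous additive generator of $T$. By Lemma~\ref{Cont-Archi-Char}, since $T \in \mathscr{T}_{_{\mathrm{ConA}}}$, there exists a continuous additive generator $G : [0, 1] \to [0, +\infty]$ of $T$, so that $T(x, y) = G^{(-1)}(G(x) + G(y))$. Recall that $G^{(-1)}$ coincides with $G^{-1}$ on $[0, G(0)]$ by Lemma~\ref{Pseudo-Inverse-Lemma} and vanishes on $[G(0), +\infty]$ (only nontrivially when $T$ is nilpotent); in either case it is continuous and non-increasing on $[0, +\infty]$. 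My target identity is
\[
u_T^{(t)} = G^{(-1)}(t \cdot G(u)) \quad \text{for all } u \in (0, 1] \text{ and } t \in (0, +\infty),
\]
with the boundary case $u = 0$ already handled by Proposition~\ref{0-Power-Pro}.

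I would first establish this formula on the rationals. An induction on $n$ using $T(x, y) = G^{(-1)}(G(x) + G(y))$ gives $u_T^{(n)} = G^{(-1)}(n \cdot G(u))$ for $n \in \mathbb{N}$. For $1/p$ powers with $u \in (0, 1]$, since $G(u)/p \leq G(0)$ lies in $\mathrm{Ran}(G)$, the value $v := G^{-1}(G(u)/p)$ satisfies $v_T^{(p)} = G^{(-1)}(p \cdot G(v)) = u$, and any smaller $v' < v$ has $G(v') > G(u)/p$ and hence $(v')_T^{(p)} < u$; Lemma~\ref{Power-Dual-Thm}(ii) then yields $u_T^{(1/p)} = G^{-1}(G(u)/p)$. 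Combining these gives $u_T^{(q/p)} = G^{(-1)}((q/p) G(u))$ for all $q/p \in \mathbb{Q}^+$. To pass to real exponents, I would feed this into definition \eqref{eq-7.8}: any rational sequence $r_n \uparrow t$ produces, by continuity and monotonicity of $G^{(-1)}$, a sequence $G^{(-1)}(r_n G(u)) \to G^{(-1)}(t \cdot G(u))$, while every $r \in [0, t] \cap \mathbb{Q}$ satisfies $G^{(-1)}(r G(u)) \geq G^{(-1)}(t G(u))$; hence the infimum in \eqref{eq-7.8} equals $G^{(-1)}(t \cdot G(u))$.

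With the closed form in hand, part (1) follows immediately: for $u = 0$ the function $t \mapsto 0$ is trivially continuous on $(0, +\infty)$, and for $u \in (0, 1]$ the map $t \mapsto G^{(-1)}(t \cdot G(u))$ is a composition of continuous functions. For part (2), when $u = 1$ we get $G(1) = 0$, so $u_T^{(t)} = G^{(-1)}(0) = 1$ for every $t > 0$, giving the limit $1$. When $u \in [0, 1)$, the case $u = 0$ is trivial, and for $u \in (0, 1)$ we have $G(u) > 0$, so $t \cdot G(u) \to +\infty$ as $t \to +\infty$; then $G^{(-1)}(y) \to 0$ as $y \to +\infty$, either because $G^{-1}(y) \to 0$ when $G(0) = +\infty$ in the strict case, or because $G^{(-1)}(y) = 0$ for all $y \geq G(0)$ in the nilpotent case, invoking Lemma~\ref{Strict-Nil-Char} for the dichotomy.

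The main obstacle I anticipate is the asymmetric treatment of $u = 0$: in the nilpotent case the generator identity fails at the left endpoint because $G(0)/p$ sits strictly inside $\mathrm{Ran}(G)$, producing $G^{-1}(G(0)/p) > 0$ even though $0_T^{(1/p)} = 0$, so $u = 0$ must be extracted as a separate case and discharged via Proposition~\ref{0-Power-Pro}. A secondary technical point is verifying that the infimum in \eqref{eq-7.8} genuinely coincides with the one-sided limit $G^{(-1)}(t \cdot G(u))$, which relies on strict monotonicity of $G^{(-1)}$ on $[0, G(0)]$ together with its continuity at the junction point $y = G(0)$ in the nilpotent regime.
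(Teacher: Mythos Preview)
Your proposal is correct and takes a genuinely different route from the paper. The paper proves continuity \emph{first}, via an intrinsic $\varepsilon$--$\delta$ argument: using uniform continuity of $T$ together with $\lim_{n\to\infty} u_T^{(1/n)} = 1$ (formula~\eqref{eq-7.5}) and the identity $u_T^{(r_2)} = T(u_T^{(r_1)}, u_T^{(r_2 - r_1)})$, it shows $u_T^{(\_)}$ is uniformly continuous on $\mathbb{Q}^+$, then extends by monotonicity and the infimum definition. Only \emph{afterwards}, in Lemma~\ref{Power-1/n-Thm} and Proposition~\ref{Power-t-Thm}, does the paper derive the closed form $u_T^{(t)} = G^{(-1)}(t\,G(u))$, and that derivation explicitly invokes Theorem~\ref{Power-Limit-Thm} to pass from rational to real exponents. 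You reverse this order: you establish the generator formula on $\mathbb{Q}^+$ directly (essentially reproving Lemma~\ref{Power-1/n-Thm}), then extend to real $t$ by computing the infimum in~\eqref{eq-7.8} using only continuity and monotonicity of $G^{(-1)}$, and read off both continuity and the limit from the closed form. Your approach is self-contained and arguably more economical, since it avoids the uniform-continuity step entirely and yields Proposition~\ref{Power-t-Thm} as a byproduct rather than a consequence; the paper's approach has the virtue of making the continuity proof independent of the generator machinery, which matters if one later wants to generalise beyond the Archimedean setting (as the paper does in Corollary~\ref{Power-Continuous-Thm}). Your treatment of the $u=0$ boundary via Proposition~\ref{0-Power-Pro} and the strict/nilpotent dichotomy via Lemma~\ref{Strict-Nil-Char} is sound.
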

\begin{proof}
It suffices to consider the following two cases:

(1) $u=0$ or $u=1$. From Proposition~\ref{pr-7.3}, it follows that
$0_{T}^{(\_)}\equiv 0$ and $1_{T}^{(\_)}\equiv 1$. Thus, both
$0_{T}^{(\_)}$ and $1_{T}^{(\_)}$ are continuous,
$\lim_{t\to +\infty} 0_{T}^{(t)}=0$, and
$\lim_{t\to +\infty} 1_{T}^{(t)}=1$.

(2) $u \in (0, 1)$. For any $\varepsilon>0$, by the uniform continuity
of $T$, it follows that there exists $\delta >0$ such that for any
$(u_{1}, v_{1})$, $(u_{2}, v_{2})\in [0, 1]^{2}$
with $\max\{|u_{1}-u_{2}|, |v_{1}-v_{2}|\}< \delta$,
$|T(u_{1}, v_{1})-T(u_{2}, v_{2})|<\varepsilon$.
From $\lim_{n\to +\infty} u_{T}^{(\frac{1}{n})}=1$ (by formula~\eqref{eq-7.5}),
it follows that there is $K\in \mathbb{N}$ such that for any $n \geq K$,
$|u_{T}^{(\frac{1}{n})}-1|< \delta$. For any $r_{1}$, $r_{2}\in \mathbb{Q}$
with $0\leq r_{2}-r_{1}< \frac{1}{K}$, by Lemma~\ref{le-7.8}, we have
\begin{align*}
&\quad |u_{T}^{(r_{1})}-u_{T}^{(r_{2})}|=u_{T}^{(r_{1})}-u_{T}^{(r_{2})}\\
&=u_{T}^{(r_{1})}-T(u_{T}^{(r_{1})}, u_{T}^{(r_{2}-r_{1})})\\
&=T(u_{T}^{(r_{1})}, 1)-T(u_{T}^{(r_{1})}, u_{T}^{(r_{2}-r_{1})})\\
&\leq T(u_{T}^{(r_{1})}, 1)-T(u_{T}^{(r_{1})}, u_{T}^{(\frac{1}{K})}).
\end{align*}
This, together with $|u_{T}^{(\frac{1}{K})}-1|<\delta$,
implies that $|u_{T}^{(r_{1})}-u_{T}^{(r_{2})}|<\varepsilon$.
This means that $u_{T}^{(\_)}$ is continuous on $\mathbb{Q}^{+}$.
Since $u_{T}^{(\_)}$ is decreasing on $(0, +\infty)$, by the definition of
$u_{T}^{(\_)}$ (see Definition~\ref{x-Power-Def}), $u_{T}^{(\_)}$
is continuous on $(0, +\infty)$ and $\lim_{t\to +\infty} u_{T}^{(t)}=0$
by Lemmas~\ref{Strict->Archi} and \ref{Archi-Char}.
\end{proof}

\begin{remark}
Alsina et al.~\cite[Page 32 (19)]{AFS2006} also proved the continuity
of $u_{T}^{(\_)}$. We include its proof here for completeness.
\end{remark}

Dually, by Theorem \ref{Power-Dual-Thm-Main}, the following result holds.

\begin{theorem}
\label{Dual-Power-Limit-Thm}
Let $S$ be the dual t-conorm of a t-norm $T\in \mathscr{T}_{_{\mathrm{ConA}}}$
and $u\in [0, 1]$. Then,
\begin{enumerate}[{\rm (1)}]
\item $u_{S}^{(\_)}$ is a continuous function on $(0, +\infty)$;
\item $\lim_{t\to +\infty} u_{S}^{(t)} =
\begin{cases}
0, & u =0, \\
1, & u \in (0, 1].
\end{cases}$
\end{enumerate}
\end{theorem}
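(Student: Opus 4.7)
The plan is to reduce everything to Theorem~\ref{Power-Limit-Thm} via the duality formula established in Theorem~\ref{Power-Dual-Thm-Main}. Specifically, for every $u \in [0,1]$ and every $t \in (0, +\infty)$, we have the identity
\begin{equation*}
u_{S}^{(t)} = 1 - (1-u)_{T}^{(t)}.
\end{equation*}
Since $T \in \mathscr{T}_{_{\mathrm{ConA}}}$ and $1 - u \in [0,1]$, Theorem~\ref{Power-Limit-Thm} applies with $u$ replaced by $1-u$, and the two claims about $u_{S}^{(\_)}$ then follow by composing with the continuous, order-reversing map $x \mapsto 1 - x$.

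For part~(1), I would fix $u \in [0,1]$ and note that by Theorem~\ref{Power-Limit-Thm}(1), the function $t \mapsto (1-u)_{T}^{(t)}$ is continuous on $(0, +\infty)$. Therefore $t \mapsto 1 - (1-u)_{T}^{(t)}$ is continuous on $(0, +\infty)$, and this is precisely $u_{S}^{(\_)}$ by the displayed identity.

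For part~(2), I would split into two cases according to the statement. If $u = 0$, then $1-u = 1 \in \{1\}$, so Theorem~\ref{Power-Limit-Thm}(2) gives $\lim_{t \to +\infty} (1-u)_{T}^{(t)} = 1$, whence $\lim_{t \to +\infty} u_{S}^{(t)} = 1 - 1 = 0$. If $u \in (0, 1]$, then $1 - u \in [0, 1)$, so Theorem~\ref{Power-Limit-Thm}(2) gives $\lim_{t \to +\infty} (1-u)_{T}^{(t)} = 0$, whence $\lim_{t \to +\infty} u_{S}^{(t)} = 1 - 0 = 1$.

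There is essentially no obstacle here, since all the analytic work has been done in Theorem~\ref{Power-Limit-Thm} (continuity of the power function under a continuous Archimedean t-norm, together with the Archimedean limit $x_{T}^{(n)} \to 0$) and in Theorem~\ref{Power-Dual-Thm-Main} (the dual relationship between the power of $T$ and the power of $S$). The only thing to be mindful of is correctly tracking the case split $u = 0$ vs.\ $u \in (0,1]$, which corresponds under $u \mapsto 1-u$ to the case split $1-u = 1$ vs.\ $1-u \in [0,1)$ used in Theorem~\ref{Power-Limit-Thm}(2).
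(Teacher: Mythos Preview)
Your proposal is correct and matches the paper's own approach: the paper simply states that the result holds ``dually, by Theorem~\ref{Power-Dual-Thm-Main}'' without writing out the details, and your argument is exactly the intended unpacking of that remark---apply the identity $u_{S}^{(t)} = 1 - (1-u)_{T}^{(t)}$ and read off both continuity and the limit from Theorem~\ref{Power-Limit-Thm}.
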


\begin{lemma}\label{le-7.12}
For any $u$, $v\in [0, 1]$ and $t\geq 0$,
$\left(T_{\mathbf{M}}(u, v)\right)_{T_{\mathbf{M}}}^{(t)}
=T_{\mathbf{M}}(u_{T_{\mathbf{M}}}^{(t)}, v_{T_{\mathbf{M}}}^{(t)})$.
\end{lemma}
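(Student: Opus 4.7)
The plan is to exploit the fact that under $T_{\mathbf{M}} = \min$, every element of $[0,1]$ is idempotent. Indeed, $T_{\mathbf{M}}(x,x) = \min(x,x) = x$, so $\mathfrak{I}_{T_{\mathbf{M}}} = [0,1]$. This triviality is what makes $T_{\mathbf{M}}$ a degenerate but important base case for the power operation.

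First I would split on whether $t = 0$ or $t > 0$. For $t = 0$, the convention $u_T^{(0)} = 1$ gives $T_{\mathbf{M}}(u_{T_{\mathbf{M}}}^{(0)}, v_{T_{\mathbf{M}}}^{(0)}) = \min(1,1) = 1 = (T_{\mathbf{M}}(u,v))_{T_{\mathbf{M}}}^{(0)}$, so the identity is immediate. For $t > 0$, I would apply Proposition~\ref{pr-7.3}: since every $u \in [0,1]$ lies in $\mathfrak{I}_{T_{\mathbf{M}}}$, we have $u_{T_{\mathbf{M}}}^{(t)} = u$ for all $u \in [0,1]$ and all $t \in (0,+\infty)$. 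In particular, applying this to $\min(u,v) \in [0,1]$ yields $(T_{\mathbf{M}}(u,v))_{T_{\mathbf{M}}}^{(t)} = T_{\mathbf{M}}(u,v)$. On the other hand, the right-hand side collapses to $T_{\mathbf{M}}(u_{T_{\mathbf{M}}}^{(t)}, v_{T_{\mathbf{M}}}^{(t)}) = T_{\mathbf{M}}(u,v)$. The two sides agree.

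There is essentially no obstacle: the entire content has been pushed into Proposition~\ref{pr-7.3}, which was established by a three-case analysis (natural-number roots, rational powers, irrational powers) using the fact that if $u$ is idempotent then $\sup\{v \in [0,1] \mid v_{T}^{(p)} < u\} = u$. So the proof here reduces to citing that proposition, noting $\mathfrak{I}_{T_{\mathbf{M}}} = [0,1]$, and observing that both sides of the desired equation equal $\min(u,v)$ (or $1$ in the boundary case $t = 0$).
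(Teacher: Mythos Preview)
Your proof is correct but follows a different route from the paper's. The paper does not invoke Proposition~\ref{pr-7.3} at all; instead it argues via monotonicity: assuming $u\le v$, Proposition~\ref{Power-Increasing-Pro-1} gives $u_{T_{\mathbf{M}}}^{(t)}\le v_{T_{\mathbf{M}}}^{(t)}$, whence both sides equal $u_{T_{\mathbf{M}}}^{(t)}$. The paper's argument never computes $u_{T_{\mathbf{M}}}^{(t)}$ explicitly and would in fact establish power stability of $\min$ with respect to \emph{any} monotone power operation, not just the one at hand. Your approach instead identifies $\mathfrak{I}_{T_{\mathbf{M}}}=[0,1]$ and uses Proposition~\ref{pr-7.3} to collapse $u_{T_{\mathbf{M}}}^{(t)}$ to $u$, reducing both sides to $\min(u,v)$; this is more concrete and arguably more transparent, at the cost of relying on the stronger (though already proved) fact about idempotent elements. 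Both arguments are one-liners; yours also handles the boundary case $t=0$ explicitly, which the paper's proof glosses over.
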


\begin{proof}
Assuming that $u \leq v$, by Proposition~\ref{Power-Increasing-Pro-1},
we have $u_{T_{\mathbf{M}}}^{(t)}\leq v_{T_{\mathbf{M}}}^{(t)}$,
implying that $(T_{\mathbf{M}}(u, v))_{T_{\mathbf{M}}}^{(t)}=u_{T_{\mathbf{M}}}^{(t)}
=T_{\mathbf{M}}(u_{T_{\mathbf{M}}}^{(t)}, v_{T_{\mathbf{M}}}^{(t)})$.
\end{proof}

\begin{lemma}
\label{Power-1/n-Thm}
Let $G$ be an AG of a t-norm $T\in \mathscr{T}_{_{\mathrm{ConA}}}$.
Then, for any $u\in (0, 1]$ and any $m\in \mathbb{N}$, we have
$u_{T}^{(m)}=G^{(-1)}(m  \cdot G(u))
=G^{-1}(\min\{m  \cdot G(u), G(0)\})$
and $u_{T}^{(\frac{1}{m})}=G^{-1}(\frac{G(u)}{m})$.
\end{lemma}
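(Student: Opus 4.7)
My plan is to pull both identities back to the defining identity $T(x,y)=G^{(-1)}(G(x)+G(y))$ of an additive generator. A repeatedly useful fact is that $G\colon[0,1]\to[0,G(0)]$ is a continuous strictly decreasing bijection onto its range, so Lemma~\ref{Pseudo-Inverse-Lemma} gives $G^{(-1)}(z)=G^{-1}(z)$ for every $z\in[0,G(0)]$, while for $z>G(0)$ direct inspection of the definition of the pseudo-inverse yields $G^{(-1)}(z)=0=G^{-1}(G(0))$. Consequently, the bridging identity $G^{(-1)}(z)=G^{-1}(\min\{z,G(0)\})$ holds for all $z\geq 0$, and this immediately delivers the equivalence of the two right-hand sides in the first formula: when $T$ is strict, Lemma~\ref{Strict-Nil-Char} yields $G(0)=+\infty$ and the $\min$ is vacuous; in the nilpotent case $G(0)<+\infty$ and it actually cuts things off at $0$.

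For the first formula I induct on $m$. The base case $m=1$ asks for $u=G^{(-1)}(G(u))=G^{-1}(G(u))$, which holds because $G(u)\in\mathrm{Ran}(G)$. For the inductive step I write $u_T^{(m+1)}=T(u,u_T^{(m)})=G^{(-1)}\bigl(G(u)+G(u_T^{(m)})\bigr)$ and split into two subcases. If $m\cdot G(u)\leq G(0)$, the induction hypothesis combined with $G\circ G^{-1}=\mathrm{id}$ on $[0,G(0)]$ gives $G(u_T^{(m)})=m\cdot G(u)$, so the sum equals $(m+1)G(u)$ as required. If $m\cdot G(u)>G(0)$, then by induction $u_T^{(m)}=0$, so $u_T^{(m+1)}=T(u,0)=0$; on the other side $(m+1)G(u)>G(0)$ forces $G^{(-1)}((m+1)G(u))=0$, closing the induction.

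For $u_T^{(1/m)}$ I invoke Lemma~\ref{Power-Dual-Thm}(ii), which identifies $u_T^{(1/m)}$ with $\min\{v\in[0,1]\mid v_T^{(m)}=u\}$. Setting $v_{\ast}=G^{-1}(G(u)/m)$, which is well defined because $u\in(0,1]$ forces $G(u)/m\in[0,G(0))$, the first formula applied to $v_\ast$ gives $(v_\ast)_T^{(m)}=G^{(-1)}(m\cdot G(u)/m)=G^{(-1)}(G(u))=u$, so $v_\ast$ lies in the set. For minimality, in the strict case $(\_)_T^{(m)}$ is strictly increasing (Lemma~\ref{Strict-Nil-Char}), so the solution is unique. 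In the nilpotent case, the explicit formula from the first part shows $v_T^{(m)}=0$ exactly when $v\leq G^{-1}(G(0)/m)$ and that $(\_)_T^{(m)}$ is strictly increasing beyond that threshold; since $u>0$ forces $v_\ast>G^{-1}(G(0)/m)$, $v_\ast$ is still the minimum (in fact unique positive) solution.

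The one place that demands care is the bookkeeping between the strict and nilpotent regimes, because $G^{(-1)}$ collapses everything beyond $G(0)$ to $0$ and must not be confused with $G^{-1}$. Once the bridging identity $G^{(-1)}(z)=G^{-1}(\min\{z,G(0)\})$ is isolated, every remaining manipulation reduces to elementary algebra with $G$ and its inverse.
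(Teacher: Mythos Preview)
Your proposal is correct and follows essentially the same route as the paper: induction on $m$ with the same two-case split governed by whether $m\cdot G(u)$ exceeds $G(0)$, followed by Lemma~\ref{Power-Dual-Thm}(ii) for the $1/m$-th root. The only cosmetic difference is that the paper handles the root by directly converting $G^{(-1)}(m\cdot G(v))=u$ to $m\cdot G(v)=G(u)$ (legitimate because $u>0$ forces $G(u)<G(0)$, so the pseudo-inverse is injective there) and reading off the unique solution, whereas you construct $v_\ast$ and then argue uniqueness via a strict/nilpotent case split; both are fine.
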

\begin{proof}
Clearly, $G$ is strictly decreasing and continuous by Lemma~\ref{Cont-Archi-Char}.
\begin{itemize}
\item[(1)] Fix $u\in (0, 1]$. We can prove
$u_{T}^{(m)}=G^{(-1)}(m \cdot G(u))$ by using mathematical induction on $m$.

\begin{itemize}
\item[1.1)] When $m=2$, by Lemma~\ref{Cont-Archi-Char}, we have
  $u_{T}^{(2)}=T(u, u)=G^{(-1)}(G(u)+G(u))=G^{(-1)}(2 \cdot G(u)).$

  \item[1.2)] Suppose that $m=k$, $u_{T}^{(k)}=G^{(-1)}(k \cdot G(u))$ holds.
  Then, when $m=k+1$, by Lemma~\ref{Cont-Archi-Char}, we have
  \begin{align*}
  u_{T}^{(k+1)}&=T(u_{T}^{(k)}, u)=G^{(-1)}(G(u_{T}^{(k)})+G(u))\\
  &=G^{(-1)}(G \circ G^{(-1)}(k \cdot G(u))+G(u)).
  \end{align*}
\end{itemize}
Next, consider the following cases:
\begin{itemize}
\item If $k\cdot G(u)\geq G(0)$, then
$G^{(-1)}(k\cdot G(u))=0$ and $G^{(-1)}((k+1)\cdot G(u))=0$,
implying that $u_{T}^{(k+1)}=G^{(-1)}(G \circ G^{(-1)}
(k \cdot G(u))+G(u))=0=G^{(-1)}((k+1)\cdot G(u))$.

\item If $k\cdot G(u)< G(0)$, then, since $G$ is
strictly decreasing and continuous on $[0, 1]$, $G^{(-1)}(k \cdot G(u))
=G^{-1}(k \cdot G(u))$. This implies that $G^{(-1)}
(G \circ G^{(-1)}(k \cdot G(u))+G(u))
=G^{(-1)}(G \circ G^{-1}(k \cdot G(u))+G(u))
=G^{(-1)}((k+1)\cdot G(u))$.
\end{itemize}
Thus,  by 1.1) and 1.2), $u_{T}^{(m)}=G^{(-1)}(m \cdot G(u))$ holds for all $m$.

\item[(2)]
Fix $u\in (0, 1]$ and $m\in \mathbb{N}$. By Lemma~\ref{Power-Dual-Thm} (ii)
and the above discussion, we have
\begin{align*}
& u_{T}^{(\frac{1}{m})}
=\min\{v \in [0, 1] \mid v_{T}^{(m)}=u\} \\
=&\min\{v \in [0, 1] \mid G^{(-1)}(m\cdot G(v))=u\} \\
=&\min\{v \in [0, 1] \mid m \cdot G(v)=G(u)\}\quad
\text{(by $G(u)<G(0)$)} \\
=&\min\left\{ v \in [0, 1] \mid G(v)=\frac{G(u)}{m}\right\} \\
=&G^{-1}\left(\frac{G(u)}{m}\right) \quad \text{(by Lemma~\ref{Pseudo-Inverse-Lemma})}
\end{align*}
\end{itemize}
\end{proof}

\begin{corollary}
\label{Corollary-7.1}
Let $T\in \mathscr{T}_{_{\mathrm{ConA}}}$ and $u\in (0, 1]$. If there exists
$z\in [0, 1]$ and $p\in \mathbb{N}$ such that $z_{T}^{(p)}=u$, then
$u_{T}^{(\frac{1}{p})}=z$.
\end{corollary}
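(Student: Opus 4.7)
The plan is to reduce the statement to the explicit formulas provided by Lemma~\ref{Power-1/n-Thm}. Since $T\in \mathscr{T}_{_{\mathrm{ConA}}}$ admits a continuous additive generator $G$ (Lemma~\ref{Cont-Archi-Char}), the hypothesis $z_T^{(p)}=u$ becomes $G^{(-1)}(p\cdot G(z))=u$, while the target $u_T^{(\frac{1}{p})}$ equals $G^{-1}(G(u)/p)$ by the same lemma. So the corollary reduces to inverting the equation $G^{(-1)}(p\cdot G(z))=u$ for $z$.

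The first step is to note that $z\in(0,1]$: if $z=0$ then $z_T^{(p)}=0\neq u$, contradicting $u>0$. Next, and this is the only delicate point, I would show that $p\cdot G(z)<G(0)$. Indeed, by construction of the pseudo-inverse of the strictly decreasing continuous $G$, the set $\{y\mid G^{(-1)}(y)=0\}$ equals $[G(0),+\infty]$, so the hypothesis $G^{(-1)}(p\cdot G(z))=u>0$ forces $p\cdot G(z)<G(0)$. Once we know $p\cdot G(z)\in \mathrm{Ran}(G)$, Lemma~\ref{Pseudo-Inverse-Lemma} allows us to replace the pseudo-inverse by the honest inverse, obtaining $G^{-1}(p\cdot G(z))=u$. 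Applying $G$, dividing by $p$, and applying $G^{-1}$ again yields $z=G^{-1}(G(u)/p)=u_T^{(\frac{1}{p})}$ by Lemma~\ref{Power-1/n-Thm}.

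The main subtlety I expect is the nilpotent case, in which $G(0)<+\infty$ and the map $z\mapsto z_T^{(p)}$ collapses an entire initial segment of $[0,1]$ to $0$, so it is \emph{not} injective. The hypothesis $u>0$ is precisely what excludes that degenerate regime, keeping $p\cdot G(z)$ inside the part of $[0,+\infty]$ on which $G$ is a genuine bijection. In the strict case $G(0)=+\infty$ this bookkeeping is vacuous and the argument is a one-line algebraic manipulation; the corollary is essentially the uniform statement handling both situations at once.
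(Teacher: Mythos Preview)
Your proof is correct and follows essentially the same route as the paper: both take a continuous additive generator $G$, use Lemma~\ref{Power-1/n-Thm} to rewrite the hypothesis as $G^{(-1)}(p\cdot G(z))=u>0$, deduce $G(u)=p\cdot G(z)$, and then conclude $u_T^{(1/p)}=G^{-1}(G(u)/p)=G^{-1}(G(z))=z$. You are simply more explicit than the paper about the two points it glosses over, namely why $z>0$ and why $p\cdot G(z)<G(0)$ so that the pseudo-inverse coincides with the true inverse.
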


\begin{proof}
Take an AG $G$ of $T$. By Lemma~\ref{Power-1/n-Thm},
we have $z_{_T}^{(p)}=G^{(-1)}(p \cdot G(z))=u > 0$, implying that
$G(u)=p \cdot G(z)$. Therefore, $u_{T}^{(\frac{1}{p})}
=G^{-1}(\frac{G(u)}{p})
=G^{-1}(G(z))=z$.
\end{proof}

\begin{proposition}\label{pr-7.6}
Let $T\in \mathscr{T}_{_{\mathrm{ConA}}}$. Then, for any $u\in (0, 1]$ and any
$p$, $p^{\prime}\in \mathbb{N}$, we have $(u_{T}^{(\frac{1}{p})})_{T}^{(\frac{1}{p^{\prime}})}
=u_{T}^{(\frac{1}{pp^{\prime}})}$.
\end{proposition}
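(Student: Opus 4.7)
The plan is to reduce the identity to a routine manipulation of an additive generator, using Lemma~\ref{Power-1/n-Thm} to turn each single-step root $u_{T}^{(1/m)}$ into the closed-form expression $G^{-1}(G(u)/m)$. Since $T\in\mathscr{T}_{_{\mathrm{ConA}}}$, Lemma~\ref{Cont-Archi-Char} furnishes a continuous (and strictly decreasing) AG $G$, and this is the only tool I will really need.

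First I would fix such a $G$ and set $v := u_{T}^{(1/p)}$. By Lemma~\ref{Power-1/n-Thm} we have $v=G^{-1}(G(u)/p)$. Before invoking the lemma a second time I need to check that $v\in(0,1]$, so that the hypothesis ``$u\in(0,1]$'' of Lemma~\ref{Power-1/n-Thm} is satisfied for $v$: since $u\in(0,1]$ gives $G(u)\in[0,G(0))$, we get $G(u)/p\in[0,G(0))$, hence $G(u)/p$ lies in the range where $G^{-1}$ agrees with $G^{(-1)}$ (Lemma~\ref{Pseudo-Inverse-Lemma}) and $v\in(0,1]$.

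Then I would apply Lemma~\ref{Power-1/n-Thm} to $v$ with exponent $1/p'$ and simply substitute:
\begin{equation*}
(u_{T}^{(1/p)})_{T}^{(1/p')}=v_{T}^{(1/p')}=G^{-1}\!\left(\frac{G(v)}{p'}\right)=G^{-1}\!\left(\frac{G(u)/p}{p'}\right)=G^{-1}\!\left(\frac{G(u)}{pp'}\right)=u_{T}^{(1/(pp'))},
\end{equation*}
where the last equality is again Lemma~\ref{Power-1/n-Thm}.

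There is no real obstacle; the only point that requires a line of care is the membership $v\in(0,1]$, which guarantees that $G^{-1}$ (rather than $G^{(-1)}$) can be used and that Lemma~\ref{Power-1/n-Thm} applies in the second step. Once that is observed, the result reduces to the trivial arithmetic identity $\frac{G(u)}{p\,p'}=\frac{G(u)/p}{p'}$ inside the generator.
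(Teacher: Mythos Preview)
Your proof is correct and follows essentially the same route as the paper: both arguments apply Lemma~\ref{Power-1/n-Thm} twice and reduce the claim to the arithmetic identity $\frac{G(u)/p}{p'}=\frac{G(u)}{pp'}$ inside the additive generator. Your explicit verification that $v=u_T^{(1/p)}\in(0,1]$ before the second application of the lemma is a useful piece of care that the paper's proof leaves implicit.
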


\begin{proof}
By Lemma~\ref{Power-1/n-Thm}, we have
$(u_{T}^{(\frac{1}{p})})_{T}^{(\frac{1}{p^{\prime}})}=G^{-1}
(\frac{G(u_{T}^{(\frac{1}{p})})}{p^{\prime}})=
G^{-1}(\frac{G(u)}{pp^{\prime}})
=u_{T}^{(\frac{1}{pp^{\prime}})}.$
\end{proof}

\begin{proposition}
\label{Power-t-Thm}
Let $G$ be an AG of a t-norm $T\in \mathscr{T}_{_{\mathrm{ConA}}}$.
Then, for any $u\in (0, 1]$ and any $t\in (0, +\infty)$,
$u_{T}^{(t)}=G^{( -1)}(t  \cdot G(u))
=G^{ -1}(\min\{t \cdot G(u),G(0)\})$.
Moreover, if the t-norm $T$ is strict, then
$u_{T}^{(t)}=G^{ -1}(t \cdot G(u))$.
\end{proposition}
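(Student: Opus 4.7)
The plan is to prove the rational exponent case first and then extend to arbitrary real $t > 0$ by continuity. For $t = q/p \in \mathbb{Q}^{+}$ with $p,q \in \mathbb{N}$, I combine Lemma~\ref{Power-1/n-Thm} with formula~\eqref{eq-7.7}: first, $u_T^{(1/p)} = G^{-1}(G(u)/p)$ lies in $(0,1]$ and its $G$-value is exactly $G(u)/p$; then
\[
u_T^{(q/p)} = \bigl(u_T^{(1/p)}\bigr)_T^{(q)} = G^{(-1)}\!\bigl(q \cdot G(u_T^{(1/p)})\bigr) = G^{(-1)}\!\bigl(\tfrac{q}{p} \cdot G(u)\bigr),
\]
using Lemma~\ref{Power-1/n-Thm} in the second equality. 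Thus the identity $u_T^{(t)} = G^{(-1)}(t \cdot G(u))$ holds on $\mathbb{Q}^{+}$.

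To pass to arbitrary $t > 0$, I argue that both sides are continuous in $t$ and then invoke density. The left side $t \mapsto u_T^{(t)}$ is continuous by Theorem~\ref{Power-Limit-Thm}. For the right side, the pseudo-inverse $G^{(-1)}$ agrees with $G^{-1}$ on $[0, G(0)]$ by Lemma~\ref{Pseudo-Inverse-Lemma} and equals $0$ beyond $G(0)$; since $G^{-1}$ is continuous on $[0, G(0)]$ with $G^{-1}(G(0)) = 0$, the function $t \mapsto G^{(-1)}(t \cdot G(u))$ is continuous on $(0,+\infty)$. Using $u_T^{(t)} = \inf\{u_T^{(r)} \mid r \in [0,t] \cap \mathbb{Q}\}$ from Definition~\ref{x-Power-Def} together with the monotonicity of $u_T^{(\_)}$ from Lemma~\ref{le-7.8}, I approximate $t$ from below by rationals and pass to the limit to obtain $u_T^{(t)} = G^{(-1)}(t \cdot G(u))$.

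The alternative form $G^{(-1)}(t \cdot G(u)) = G^{-1}(\min\{t \cdot G(u), G(0)\})$ follows by case analysis: if $t \cdot G(u) \leq G(0)$, Lemma~\ref{Pseudo-Inverse-Lemma} yields $G^{(-1)}(t \cdot G(u)) = G^{-1}(t \cdot G(u))$; if $t \cdot G(u) > G(0)$, the defining set of the pseudo-inverse is empty, so $G^{(-1)}(t \cdot G(u)) = 0 = G^{-1}(G(0))$. For the \emph{moreover} clause, Lemma~\ref{Strict-Nil-Char} says strictness of $T$ is equivalent to $G(0) = +\infty$, in which case $\min\{t \cdot G(u), G(0)\} = t \cdot G(u)$ always lies in the range of $G$, and hence $u_T^{(t)} = G^{-1}(t \cdot G(u))$ without the $\min$ correction.

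The delicate point I anticipate is the behavior at the boundary $t \cdot G(u) = G(0)$, which is relevant only in the nilpotent case. There $u_T^{(t)}$ transitions to $0$, and one must ensure that the infimum defining $u_T^{(t)}$ indeed matches the transition value of $G^{(-1)}$, i.e., that $G^{(-1)}$ is continuous at $G(0)$ from below---guaranteed by $G^{-1}(G(0)) = 0$ from the continuity and strict monotonicity of $G$ (Lemma~\ref{Cont-Archi-Char}). Apart from this boundary bookkeeping, the argument is a straightforward density-and-continuity bootstrap on top of the rational-exponent identities supplied by Lemma~\ref{Power-1/n-Thm}.
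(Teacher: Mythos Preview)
Your proof is correct and follows essentially the same approach as the paper: establish the formula for rational exponents via Lemma~\ref{Power-1/n-Thm} and then extend to all $t>0$ using the continuity of $u_T^{(\_)}$ from Theorem~\ref{Power-Limit-Thm}. You are simply more explicit than the paper about the continuity of the right-hand side $t\mapsto G^{(-1)}(t\cdot G(u))$ and the boundary behavior at $t\cdot G(u)=G(0)$, but the argument is the same density-and-continuity bootstrap.
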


\begin{proof}
For any $\frac{q}{p}\in \mathbb{Q}^{+}$, by Lemma~\ref{Power-1/n-Thm}, we have
$u_{T}^{(\frac{q}{p})}=(u_{T}^{(\frac{1}{p})})_{T}^{(q)}
=G^{(-1)}(q \cdot \frac{G(u)}{p})
=G^{-1}(\min\{\frac{q}{p} \cdot G(u), G(0)\}).$
This, together with Theorem~\ref{Power-Limit-Thm}, implies that, for any
$t \in (0, +\infty)$, $u_{T}^{(t)}=G^{(-1)}(\min\{t \cdot G(u), G(0)\})$.
\end{proof}

\begin{theorem}
\label{Power-Formula-Xinxing}
If a t-norm $T$ satisfies {Lemma~\ref{Cont-Char-Thm} ii-3)}, i.e.,
$T=\left(\langle a_\lambda, e_\lambda, T_\lambda \rangle\right)_{\lambda\in \mathscr{A}}$, then
\begin{enumerate}[{\rm (1)}]
 \item For $u \in (a_{\lambda}, e_{\lambda})$ and $t>0$,
\begin{align*}
u_{T}^{(t)}& =a_{\lambda} +(e_{\lambda} -a_{\lambda})
\left(\frac{u -a_{\lambda}}{e_{\lambda} -a_{\lambda}}\right)_{T_{\lambda}}^{(t)}\\
& =a_{\lambda} +(e_{\lambda} -a_{\lambda})G_{\lambda}^{( -1)}
\left(t\cdot G_{\lambda}\left(\frac{u -a_{\lambda}}
{e_{\lambda} -a_{\lambda}}\right)\right),
\end{align*}
where $G_{\lambda}$ is an AG of $T_{\lambda}$;

\item For $u\in [0, 1]\backslash \cup_{\lambda\in \mathscr{A}}(a_{\lambda}, e_{\lambda})$
and $t>0$, $u_{T}^{(t)}=u$.
\end{enumerate}
\end{theorem}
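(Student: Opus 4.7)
Part~(2) is immediate: on $[0,1]\setminus\bigcup_{\lambda\in\mathscr{A}}(a_\lambda,e_\lambda)$ the ordinal sum coincides with $\min$ in both arguments, so every such $u$ is an idempotent of $T$, and Proposition~\ref{pr-7.3} gives $u_T^{(t)}=u$ for every $t>0$.

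For part~(1), fix $\lambda\in\mathscr{A}$ and $u\in(a_\lambda,e_\lambda)$. The first step is to prove by induction on $n\in\mathbb{N}$ that
\[
u_T^{(n)} = a_\lambda+(e_\lambda-a_\lambda)\Big(\tfrac{u-a_\lambda}{e_\lambda-a_\lambda}\Big)_{T_\lambda}^{(n)};
\]
the inductive step is valid because the iterate at stage $n$ still lies in $[a_\lambda,e_\lambda]$, so the ordinal-sum definition reduces $T$ to the affinely rescaled $T_\lambda$ on both arguments. The second step is to compute $u_T^{(1/p)}$ via Lemma~\ref{Power-Dual-Thm}~(ii), namely as $\min\{v\in[0,1]:v_T^{(p)}=u\}$. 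The crucial point is to show that any such $v$ necessarily lies in $(a_\lambda,e_\lambda)$: if $v\leq a_\lambda$, monotonicity gives $v_T^{(p)}\leq v\leq a_\lambda<u$; if $v\geq e_\lambda$, monotonicity combined with the idempotence of $e_\lambda$ gives $v_T^{(p)}\geq(e_\lambda)_T^{(p)}=e_\lambda>u$. Inverting the displayed identity on $(a_\lambda,e_\lambda)$ then yields $u_T^{(1/p)}=a_\lambda+(e_\lambda-a_\lambda)(\tfrac{u-a_\lambda}{e_\lambda-a_\lambda})_{T_\lambda}^{(1/p)}$, and combining with formula~\eqref{eq-7.7} extends this to every rational exponent $q/p\in\mathbb{Q}^+$.

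Finally, to extend to arbitrary $t>0$, I would invoke Definition~\ref{x-Power-Def} and pull the infimum through the increasing affine map $x\mapsto a_\lambda+(e_\lambda-a_\lambda)x$, then use the continuity of $(\cdot)_{T_\lambda}^{(\_)}$ provided by Theorem~\ref{Power-Limit-Thm} to identify the remaining infimum over $r\in[0,t]\cap\mathbb{Q}$ with $(\tfrac{u-a_\lambda}{e_\lambda-a_\lambda})_{T_\lambda}^{(t)}$. The second equality in~(1) is then nothing but Proposition~\ref{Power-t-Thm} applied to $T_\lambda$ with generator $G_\lambda$. I expect the main obstacle to be the localization step in the $1/p$-th root computation, where one must use only monotonicity and the piecewise structure of the ordinal sum to rule out every candidate $v$ lying outside the summand interval $(a_\lambda,e_\lambda)$.
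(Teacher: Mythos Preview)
Your proposal is correct and follows essentially the same route as the paper: establish the integer-power formula on each summand (the paper's Eq.~\eqref{eq-7.24}), localize the $p$-th root to $(a_\lambda,e_\lambda)$ by the same monotonicity/idempotence argument you describe, extend to rationals via Eq.~\eqref{eq-7.7}, and pass to real $t$ by continuity (Theorem~\ref{Power-Limit-Thm}) together with Proposition~\ref{Power-t-Thm} for the generator form. The only cosmetic difference is that the paper invokes Corollary~\ref{Corollary-7.1} to show the pre-image $\{x:x_T^{(p)}=z\}$ is a singleton before applying Lemma~\ref{Power-Dual-Thm}~(ii), whereas your ``inverting'' step can be read as pushing the $\min$ in Lemma~\ref{Power-Dual-Thm}~(ii) through the increasing affine bijection directly; both lead to the same formula.
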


\begin{proof}
(1) Fix $z\in (a_{\lambda_0}, e_{\lambda_0})$ for some $\lambda_0\in \mathscr{A}$.
We first prove that
\begin{equation}
\label{eq-7.23}
\forall p\in \mathbb{N},\
z_{T}^{(\frac{1}{p})} =a_{\lambda_{0}} +(e_{\lambda_{0}} -a_{\lambda_{0}})
\cdot\left(\frac{z -a_{\lambda_{0}}}{e_{\lambda_{0}} -
a_{\lambda_{0}}}\right)_{T_{\lambda_{0}}}^{(\frac{1}{p})}.
\end{equation}

For any $x\in [0, 1]$,  it can be verified that
\begin{enumerate}[(a)]
\item If $x\in \left[0, a_{\lambda_{0}}\right]$,
$x_{T}^{(p)}\leq x\leq a_{\lambda_{0}}<z$;
\item If $x\in \left[e_{\lambda_{0}}, 1\right]$,
$x_{T}^{(p)}\geq (e_{\lambda_{0}})_{T}^{(p)}= e_{\lambda_{0}}>z$.
\end{enumerate}
This, together with Lemma~\ref{Power-Dual-Thm} (ii), implies that
$z_{T}^{(\frac{1}{p})}\in(a_{\lambda_{0}}, e_{\lambda_{0}})$. Meanwhile, for any
$x\in (a_{\lambda_{0}}, e_{\lambda_{0}})$, it is not difficult to check that
\begin{equation}
\label{eq-7.24}
x_{T}^{(p)} =a_{\lambda_{0}} +(e_{\lambda_{0}} -a_{\lambda_{0}})\cdot
\left(\frac{x -a_{\lambda_{0}}}{e_{\lambda_{0}} -a_{\lambda_{0}}}\right)_{T_{\lambda_{0}}}^{(p)}.
\end{equation}
Thus, for any $x\in[0, 1]$ with $x_{T}^{(p)}=z\in(a_{\lambda_{0}}, e_{\lambda_{0}})$, we have
$a_{\lambda_{0}} +(e_{\lambda_{0}} -a_{\lambda_{0}}) \cdot
\left(\frac{x -a_{\lambda_{0}}}{e_{\lambda_{0}} -a_{\lambda_{0}}}\right)_{T_{\lambda_{0}}}^{(p)} =z$,
implying that $0 <\frac{z -a_{\lambda_{0}}}{e_{\lambda_{0}} -a_{\lambda_{0}}}
 =\left(\frac{x -a_{\lambda_{0}}}{e_{\lambda_{0}} -a_{\lambda_{0}}}\right)_{T_{\lambda_{0}}}^{(p)}$.
Since $T_{\lambda_0}$ is a continuous Archimedean t-norm, by Corollary~\ref{Corollary-7.1}, we have
$$
\frac{x-a_{\lambda_{0}}}{e_{\lambda_{0}}-a_{\lambda_{0}}}
=\left(\frac{z-a_{\lambda_{0}}}{e_{\lambda_{0}}-a_{\lambda_{0}}}\right)_{T_{\lambda_{0}}}^{(\frac{1}{p})},
$$
implying that
$$
\left\{x\mid x_{T}^{(p)} =z\right\}
 =\left\{a_{\lambda_{0}} +(e_{\lambda_{0}} -a_{\lambda_{0}}) \cdot
\left(\frac{z -a_{\lambda_{0}}}{e_{\lambda_{0}}
 -a_{\lambda_{0}}}\right)_{T_{\lambda_{0}}}^{(\frac{1}{p})}\right\}.
$$
This, together with Lemma~\ref{Power-Dual-Thm} (ii), implies that
\begin{equation*}
z_{T}^{(\frac{1}{p})}=a_{\lambda_{0}}+(e_{\lambda_{0}}-a_{\lambda_{0}})
\cdot\left(\frac{z-a_{\lambda_{0}}}{e_{\lambda_{0}}-
a_{\lambda_{0}}}\right)_{T_{\lambda_{0}}}^{(\frac{1}{p})}.
\end{equation*}

Second, for any $\frac{q}{p}\in \mathbb{Q}^{+}$, by formulas~\eqref{eq-7.23}
and \eqref{eq-7.24} and Proposition~\ref{Power-t-Thm}, we have
\begin{align*}
z_{T}^{(\frac{q}{p})}&=
(z_{T}^{(\frac{1}{p})})_{T}^{(q)}\\
&=
\Big(a_{\lambda_{0}}+(e_{\lambda_{0}}-a_{\lambda_{0}})
\cdot\Big(\frac{z-a_{\lambda_{0}}}{e_{\lambda_{0}}-
a_{\lambda_{0}}}\Big)_{T_{\lambda_{0}}}^{(\frac{1}{p})}\Big)_{T}^{(q)}\\
&=a_{\lambda_{0}}+(e_{\lambda_{0}}-a_{\lambda_{0}})
\cdot\Big(\Big(\frac{z-a_{\lambda_{0}}}{e_{\lambda_{0}}-
a_{\lambda_{0}}}\Big)_{T_{\lambda_{0}}}^{(\frac{1}{p})}\Big)_{T_{\lambda_0}}^{(q)}\\
&=a_{\lambda_{0}}+(e_{\lambda_{0}}-a_{\lambda_{0}})
\cdot\Big (\frac{z-a_{\lambda_{0}}}{e_{\lambda_{0}}-
a_{\lambda_{0}}}\Big)_{T_{\lambda_{0}}}^{(\frac{q}{p})}.
\end{align*}
This, together with Theorem~\ref{Power-Limit-Thm} and
Proposition~\ref{Power-t-Thm}, implies that for any
$t \in (0, +\infty)$, we have $z_{T}^{(t)}=a_{\lambda_{0}}
(e_{\lambda_{0}}-a_{\lambda_{0}})
\cdot (\frac{z-a_{\lambda_{0}}}{e_{\lambda_{0}}-
a_{\lambda_{0}}})_{T_{\lambda_{0}}}^{(t)}=a_{\lambda_{0}}
+(e_{\lambda_{0}}-a_{\lambda_{0}})\cdot G_{\lambda_{0}}^{(-1)}
(t\cdot G_{\lambda_{0}}
(\frac{x-a_{\lambda_{0}}}{e_{\lambda_{0}}-a_{\lambda_{0}}}))$.

(2) It follows directly from Proposition~\ref{pr-7.3} and
$[0, 1]\backslash\cup_{\lambda\in \mathscr{A}}(a_{\lambda}, e_{\lambda})
\subseteq \mathfrak{I}_{T}$.
\end{proof}

\begin{corollary}
\label{Power-Continuous-Thm}
If the t-norm $T$ is continuous, then, for any $u \in [0, 1]$,
the function $u_{T}^{(\_)}$ is continuous on $(0, +\infty)$.
\end{corollary}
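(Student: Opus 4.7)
The plan is to use the representation theorem for continuous t-norms (Lemma~\ref{Cont-Char-Thm}) to reduce the claim to three cases, each of which is either already known or reduces to the continuous Archimedean case (Theorem~\ref{Power-Limit-Thm}). Fix $u \in [0,1]$; I want to show $t \mapsto u_T^{(t)}$ is continuous on $(0,+\infty)$.

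First I would dispatch the case $T = T_{\mathbf{M}}$. Since every point of $[0,1]$ is idempotent for $T_{\mathbf{M}}$, Proposition~\ref{pr-7.3} gives $u_{T_{\mathbf{M}}}^{(t)} = u$ for every $t > 0$, so the power function is constant and hence trivially continuous. The case $T \in \mathscr{T}_{_{\mathrm{ConA}}}$ is exactly Theorem~\ref{Power-Limit-Thm}(1), so nothing needs to be redone.

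The only remaining case is when $T$ is an ordinal sum $\bigl(\langle a_\lambda, e_\lambda, T_\lambda\rangle\bigr)_{\lambda \in \mathscr{A}}$ of continuous Archimedean t-norms. Here I would split on the location of $u$. If $u \notin \bigcup_{\lambda \in \mathscr{A}} (a_\lambda, e_\lambda)$, then $u$ is an idempotent of $T$, so Theorem~\ref{Power-Formula-Xinxing}(2) (equivalently Proposition~\ref{pr-7.3}) gives $u_T^{(t)} = u$, again constant. If $u \in (a_{\lambda_0}, e_{\lambda_0})$ for some $\lambda_0$, then Theorem~\ref{Power-Formula-Xinxing}(1) yields
\[
u_T^{(t)} = a_{\lambda_0} + (e_{\lambda_0} - a_{\lambda_0})\left(\frac{u - a_{\lambda_0}}{e_{\lambda_0} - a_{\lambda_0}}\right)_{T_{\lambda_0}}^{(t)}.
\]
Since $T_{\lambda_0} \in \mathscr{T}_{_{\mathrm{ConA}}}$, the map $t \mapsto \bigl(\tfrac{u - a_{\lambda_0}}{e_{\lambda_0} - a_{\lambda_0}}\bigr)_{T_{\lambda_0}}^{(t)}$ is continuous on $(0,+\infty)$ by Theorem~\ref{Power-Limit-Thm}(1), and $u_T^{(t)}$ is an affine image of it, hence continuous.

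I do not expect a real obstacle here: Lemma~\ref{Cont-Char-Thm} gives an exhaustive trichotomy of continuous t-norms, and for each type a prior result already tells us what $u_T^{(t)}$ looks like, so the corollary is essentially an assembly of Theorems~\ref{Power-Limit-Thm} and \ref{Power-Formula-Xinxing}. The one spot to be a little careful is the ordinal-sum case when $u$ lies at an endpoint $a_\lambda$ or $e_\lambda$, but such points are idempotents of $T$, so they fall under the constant branch of Theorem~\ref{Power-Formula-Xinxing}(2) and cause no discontinuity.
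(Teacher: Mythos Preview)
Your proposal is correct and follows essentially the same approach as the paper: split via Lemma~\ref{Cont-Char-Thm} into the minimum, continuous Archimedean, and ordinal-sum cases, then invoke Proposition~\ref{pr-7.3}, Theorem~\ref{Power-Limit-Thm}, and Theorem~\ref{Power-Formula-Xinxing} respectively. Your write-up is in fact a bit more explicit than the paper's (which simply cites Theorem~\ref{Power-Formula-Xinxing} for the ordinal-sum case without unpacking the two sub-cases), but the content is the same.
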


\begin{proof}
If $T =T_{\mathbf{M}}$ or $T\in \mathscr{T}_{_{\mathrm{ConA}}}$,
by Proposition~\ref{pr-7.3} and Theorem~\ref{Power-Limit-Thm},
$u_{T}^{(\_)}$ is continuous.

If $T$ is an ordinal sum of continuous Archimedean t-norms,
by Theorem~\ref{Power-Formula-Xinxing},
$u_{T}^{(\_)}$ is continuous.
\end{proof}

\begin{remark}
\label{Remark-4}
(1) Walker and Walker~\cite{WW2002} gave another form of power operation for continuous t-norms via
  multiplicative generators. By Theorem~\ref{Power-Formula-Xinxing}, it can be verified that our
  Definition~\ref{x-Power-Def} is equivalent to \cite[Definition~22]{WW2002}.

(2) Boixader and Recasens~\cite[Definition 4.2.1]{BR2016} defined the
{\it $n$-th root} $u_{T}^{(\frac{1}{n})}$ of $u$ for a continuous t-norm $T$ as follows:
  $$
  u_{T}^{(\frac{1}{n})}=\sup\left\{v  \in [0,1] \mid v_{T}^{(n)} \leq u\right\},
  $$
  and for $\frac{q}{p}\in \mathbb{Q}^{+}$, $u_{T}^{(\frac{q}{p})}=(u_{T}^{(\frac{1}{p})})_{T}^{(q)}$.
  In~\cite[Definition 4.2.3]{BR2016}, another method is developed to extend the
rational powers to irrational powers, as follows:
\begin{itemize}
  \item If $t\in(0, +\infty)$, let $\left\{t_{n}\right\}\subseteq \mathbb{Q}$
be a sequence with $\lim_{n\rightarrow +\infty}
t_{n}=t$. Given any $u \in [0, 1]$, the power $u_{T}^{(t)}$ is
\begin{equation}
\label{Eq-Limit}
u_{T}^{(t)}=\lim_{n\rightarrow +\infty}x_{T}^{(t_{n})}.
\end{equation}
\end{itemize}
Analogously to the above proof, it not difficult to check that, for the power $x_{T}^{(t)}$
defined by Boixader and Recasens, we have
\begin{enumerate}[(i)]
  \item If $T=T_{\mathbf{M}}$, then $u_{T_{\mathbf{M}}}^{(t)}=u$ holds for all $u\in [0, 1]$.
  \item If $T\in \mathscr{T}_{_{\mathrm{ConA}}}$ with an AG $G$, then $u_{T}^{(t)}
  =G^{(-1)}(t\cdot G(u))$.

  \item If $T$ is an ordinal sum of t-norms in $\mathscr{T}_{_{\mathrm{ConA}}}$,
  i.e., $T=\left(\langle a_\lambda, e_\lambda, T_\lambda \rangle\right)_{\lambda\in \mathscr{A}}$, then
  $$
  u_{T}^{(t)} =
  \begin{cases}
  a_{\lambda} + (e_{\lambda} -a_{\lambda}) \cdot G_{\lambda}^{( -1)}
\left(t \cdot G_{\lambda}\left(\frac{u -a_{\lambda}}{e_{\lambda} -a_{\lambda}}\right)\right),
 & u \in [a_{\lambda}, e_{\lambda}], \\
  u, & u \in [0, 1]\backslash \cup_{\lambda\in \mathscr{A}}[a_{\lambda}, e_{\lambda}],
  \end{cases}
  $$
  where $G_{\lambda}$ is an AG of $T_{\lambda}$.
\end{enumerate}
Meanwhile, they claimed that the continuity of $T$ assures that the limit
defined in Eq.~\eqref{Eq-Limit} independently exists from the choice
of the sequence $\left\{t_{n}\right\}$. Actually,
the limit $\lim_{n\to +\infty}u_{T}^{(t_{n})}$ exists and is
independent of the sequence $\left\{t_{n}\right\}$ if and only if
$u_{T}^{(\_)}|_{\mathbb{Q}^{+}}$ is continuous. Similarly to the proof of
Corollary~\ref{Power-Continuous-Thm}, by (i)--(iii),
it can be verified that this claim is true, i.e., \cite[Definition~4.2.3]{BR2016} is well defined.
\end{remark}

\begin{proposition}
\label{Power-Mon-Thm}
Assume that $T$ is a continuous t-norm and $S$ is a continuous t-conorm.
Then, for any $u \in [0, 1]$ and any $t_1$, $t_2 >0$, we have
$T(u_{T}^{(t_1)}, u_{T}^{(t_2)}) =u_{T}^{(t_1 +t_2)}$ and
$S(u_{S}^{(t_1)}, u_{S}^{(t_2)}) =u_{S}^{(t_1 +t_2)}$.
In particular, $u_{T}^{(\_)}$ is decreasing on $(0, +\infty)$
and $u_{S}^{(\_)}$ is increasing on $(0, +\infty)$.
\end{proposition}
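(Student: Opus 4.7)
The plan is to deduce the identity for real exponents by extending the rational version already provided in Lemma~\ref{le-7.8} through a continuity argument, and then to obtain the t-conorm case by duality. Specifically, I would fix $u \in [0, 1]$ and $t_1, t_2 \in (0, +\infty)$, and choose sequences $\{r_k^{(1)}\}, \{r_k^{(2)}\} \subseteq \mathbb{Q} \cap (0, +\infty)$ with $r_k^{(i)} \to t_i$ as $k \to +\infty$. Lemma~\ref{le-7.8} gives, for each $k$,
\[
T(u_{T}^{(r_k^{(1)})}, u_{T}^{(r_k^{(2)})}) = u_{T}^{(r_k^{(1)} + r_k^{(2)})}.
\]
Corollary~\ref{Power-Continuous-Thm} guarantees that $u_T^{(\_)}$ is continuous on $(0, +\infty)$, so $u_T^{(r_k^{(i)})} \to u_T^{(t_i)}$ and $u_T^{(r_k^{(1)} + r_k^{(2)})} \to u_T^{(t_1 + t_2)}$. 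Combined with the continuity of $T$ itself, passing to the limit on both sides yields $T(u_{T}^{(t_1)}, u_{T}^{(t_2)}) = u_{T}^{(t_1 + t_2)}$.

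For the t-conorm identity, I would invoke Theorem~\ref{Power-Dual-Thm-Main} to write $u_S^{(t)} = 1 - (1-u)_T^{(t)}$, where $T$ is the dual t-norm of $S$, and combine it with the dual relation $S(x, y) = 1 - T(1-x, 1-y)$. Then
\[
S(u_{S}^{(t_1)}, u_{S}^{(t_2)}) = 1 - T((1-u)_{T}^{(t_1)}, (1-u)_{T}^{(t_2)}) = 1 - (1-u)_{T}^{(t_1 + t_2)} = u_{S}^{(t_1 + t_2)},
\]
where the middle equality uses the t-norm identity established in the previous paragraph.

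The monotonicity conclusions follow immediately from these two identities. Since any t-norm satisfies $T(a, b) \leq T(a, 1) = a$ by monotonicity and neutrality, for $0 < s < t$ the decomposition $t = s + (t-s)$ gives $u_T^{(t)} = T(u_T^{(s)}, u_T^{(t-s)}) \leq u_T^{(s)}$, so $u_T^{(\_)}$ is decreasing on $(0, +\infty)$; the dual bound $S(a, b) \geq S(a, 0) = a$ gives that $u_S^{(\_)}$ is increasing.

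The only genuinely delicate point is having continuity of $u_T^{(\_)}$ on all of $(0, +\infty)$ for an arbitrary continuous t-norm, not just in the strict Archimedean subcase. However, Corollary~\ref{Power-Continuous-Thm} already handles this by treating each of the three cases in Lemma~\ref{Cont-Char-Thm} (the minimum, continuous Archimedean, and ordinal sums of continuous Archimedean t-norms). Once that is available, the extension from rationals to reals is routine, and I do not anticipate a substantive obstacle beyond careful bookkeeping of the approximating sequences.
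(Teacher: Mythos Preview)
Your proposal is correct and follows essentially the same approach as the paper: both arguments approximate $t_1, t_2$ by rational sequences, invoke Lemma~\ref{le-7.8} for the rational identity, use Corollary~\ref{Power-Continuous-Thm} to pass to the limit via the continuity of $u_T^{(\_)}$ and of $T$, and then handle the t-conorm case by duality through Theorem~\ref{Power-Dual-Thm-Main}. Your explicit derivation of the monotonicity statements from the additive identity is a nice touch that the paper leaves implicit.
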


\begin{proof}
(1) Take $r_{n}^{(1)}\in (0, t_1)\cap \mathbb{Q}$, $r_{n}^{(2)}\in (0, t_2)\cap \mathbb{Q}$
such that $\lim_{n\to +\infty} r_{n}^{(1)} = t_1$ and $\lim_{n\to +\infty}
r_{n}^{(2)} = t_2$. Clearly, $\lim_{n\to +\infty}(r_{n}^{(1)} +r_{n}^{(2)}) = (t_1 +t_2)$.
By Corollary~\ref{Power-Continuous-Thm}, we have
$\lim_{n\to +\infty}u_{T}^{(r_{n}^{(1)})} =u_{T}^{(t_1)}$,
$\lim_{n\to +\infty}u_{T}^{(r_{n}^{(2)})} =u_{T}^{(t_2)}$, and
$\lim_{n\to +\infty}u_{T}^{(r_{n}^{(1)} +r_{n}^{(2)})} =u_{T}^{(t_1 +t_2)}$.
Since $T$ is continuous, by Lemma~\ref{le-7.8}, we have
$T(u_{T}^{(t_1)}, u_{T}^{(t_2)})=T(\lim\limits_{n\to +\infty} u_{T}^{(r_{n}^{(1)})},
\lim\limits_{n\to +\infty} u_{T}^{(r_{n}^{(2)})})=\lim_{n\to +\infty}
T(u_{T}^{(r_{n}^{(1)})}, u_{T}^{(r_{n}^{(2)})})=\lim_{n\to +\infty}
u_{T}^{(r_{n}^{(1)}+r_{n}^{(2)})}= u_{T}^{(t_1+t_2)}$.

(2) Given a continuous t-conorm $S$, take its dual t-norm $T$.
By Theorem~\ref{Power-Dual-Thm-Main} and the above proof,
we have $S(u_{S}^{(t_1)}, u_{S}^{(t_2)})=1-T(1-u_{S}^{(t_1)}, 1-u_{S}^{(t_2)})
=1-T((1-u)_{T}^{(t_1)}, (1-u)_{T}^{(t_2)})=1-(1-u)_{T}^{(t_1+t_2)}
=u_{S}^{(t_1+t_2)}$.
\end{proof}

\begin{theorem}\label{th-7.5}
Let $T \in \mathscr{T}_{_{\mathrm{ConA}}}$. Then, for
$u$, $v \in [0, 1]$ and $t >0$, we have
\begin{equation}
(T(u, v))_{T}^{(t)}=
\begin{cases}
0, & T(u, v) =0, \\
T(u_{T}^{(t)}, v_{T}^{(t)}), & T(u, v)  \in (0, 1].
\end{cases}
\end{equation}
\end{theorem}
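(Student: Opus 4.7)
The plan is to reduce the theorem to a direct calculation using an additive generator $G$ of $T$, which exists because $T \in \mathscr{T}_{_{\mathrm{ConA}}}$ by Lemma~\ref{Cont-Archi-Char}, combined with the closed-form expression for powers in Proposition~\ref{Power-t-Thm}. The first step is to dispose of the degenerate case: when $T(u,v)=0$, the left-hand side is $0_{T}^{(t)}$, which equals $0$ by Proposition~\ref{0-Power-Pro}, matching the stated value.

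For the nontrivial case $T(u,v) \in (0,1]$, I would first observe that $T(u,v)>0$ forces $u,v>0$ and $G(u)+G(v) < G(0)$, so that $G(T(u,v))=G(u)+G(v)$. Proposition~\ref{Power-t-Thm} then yields
\[
(T(u,v))_{T}^{(t)} = G^{(-1)}\bigl(t\cdot G(T(u,v))\bigr) = G^{(-1)}\bigl(t(G(u)+G(v))\bigr),
\]
while for $u,v>0$,
\[
u_{T}^{(t)} = G^{(-1)}(t\cdot G(u)), \qquad v_{T}^{(t)} = G^{(-1)}(t\cdot G(v)).
\]
The remaining task is to verify that $G^{(-1)}(t(G(u)+G(v))) = T(u_{T}^{(t)}, v_{T}^{(t)})$.

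The next step is a case split, which I expect to be the only real point of care in the argument. If at least one of $tG(u) \geq G(0)$ or $tG(v)\geq G(0)$ holds, then the corresponding power is $0$ (again by Proposition~\ref{Power-t-Thm}), so $T(u_{T}^{(t)}, v_{T}^{(t)}) = 0$; simultaneously $t(G(u)+G(v)) \geq G(0)$, so the left-hand side is $G^{(-1)}(t(G(u)+G(v))) = 0$ as well. In the remaining subcase, both $tG(u) < G(0)$ and $tG(v) < G(0)$, so $G(u_{T}^{(t)}) = tG(u)$ and $G(v_{T}^{(t)}) = tG(v)$, and the defining identity of the additive generator $G$ gives
\[
T(u_{T}^{(t)}, v_{T}^{(t)}) = G^{(-1)}\bigl(G(u_{T}^{(t)}) + G(v_{T}^{(t)})\bigr) = G^{(-1)}\bigl(t(G(u)+G(v))\bigr),
\]
which coincides with the left-hand side.

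The main obstacle is essentially bookkeeping: the nilpotent situation $G(0)<+\infty$ is what forces the subcase split, since otherwise one of the powers $u_{T}^{(t)}, v_{T}^{(t)}$ could vanish even though $T(u,v)>0$. Once the identity $G(G^{(-1)}(x)) = \min\{x, G(0)\}$ implicit in Proposition~\ref{Power-t-Thm} is invoked, the two subcases merge into the single formula above. For strict $T$, where $G(0)=+\infty$, the split disappears and the verification becomes a one-line computation in $[0,+\infty]$.
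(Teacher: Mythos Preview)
Your proof is correct. Both you and the paper handle the degenerate case $T(u,v)=0$ identically via Proposition~\ref{0-Power-Pro}, and both begin the main case by noting $u,v>0$ and $G(u)+G(v)<G(0)$ so that $G(T(u,v))=G(u)+G(v)$. From there the routes diverge: the paper proceeds in three stages---first establishing the identity for exponents $1/p$ using Lemma~\ref{Power-1/n-Thm}, then lifting it to $q/p\in\mathbb{Q}^+$ via $(T(u,v))_T^{(q/p)}=((T(u,v))_T^{(1/p)})_T^{(q)}$ and associativity, and finally passing to irrational $t$ through the infimum definition and continuity of $T$. You instead invoke Proposition~\ref{Power-t-Thm} once to get the closed-form $w_T^{(t)}=G^{(-1)}(tG(w))$ for all real $t>0$ and reduce everything to a single additive-generator computation, with the nilpotent subcase split ($tG(u)\geq G(0)$ or not) handling the one subtlety. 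Your argument is shorter and cleaner precisely because Proposition~\ref{Power-t-Thm} has already absorbed the rational-to-real extension work; the paper's staged proof is more self-contained but partially duplicates that effort.
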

\begin{proof}
By $T\in \mathscr{T}_{_{\mathrm{ConA}}}$ and Lemma~\ref{Cont-Archi-Char},
it follows that there exists an AG
$G: [0, 1]\rightarrow [0, +\infty]$ for $T$.

(1) If $T(u, v)=0$, by Proposition~\ref{0-Power-Pro},
$(T(u, v))_{T}^{(t)}=0$.

(2) If $T(u, v)\in (0, 1]$, then $u >0$ and $v >0$. Meanwhile,
it can be verified that $G(u)+G(v)< G(0)$, since
$G(u)+G(v)\geq G(0)$ implies $T(u, v)=
G^{(-1)}(G(u)+G(v))=0$.
Together with Lemma~\ref{Pseudo-Inverse-Lemma}, we have
$T(u, v)=G^{(-1)}(G(u)+G(v))
=G^{-1}(G(u)+G(v))$.

To prove that $(T(u, v))_{T}^{(t)}=T(u_{T}^{(t)}, v_{T}^{(t)})$
holds for all $t\in (0, +\infty)$, consider the following three cases:

{2.1) For any $p\in \mathbb{N}$, by Lemma~\ref{Power-1/n-Thm}, we have
$(T(u, v))_{T}^{(\frac{1}{p})} =G^{-1}
(\frac{G(T(u, v))}{p})
 =G^{-1}(\frac{G(G^{-1}
(G(u) +G(v)))}{p})
 =G^{-1}(\frac{G(u) +G(v)}{p})$
and
$u_{T}^{(\frac{1}{p})}=G^{-1}(\frac{G(u)}{p})$,
$v_{T}^{(\frac{1}{p})}=G^{-1}(\frac{G(v)}{p})$,
implying that
\begin{align*}
& T(u_{T}^{(\frac{1}{p})}, v_{T}^{(\frac{1}{p})})
 =G^{(-1)}\Big(G(u_{T}^{(\frac{1}{p})}) +G(v_{T}^{(\frac{1}{p})})\Big) \\
 =&G^{(-1)}\Big(G \circ G^{-1}\Big(\frac{G(u)}{p}\Big)
 +G \circ G^{-1}\Big(\frac{G(v)}{p}\Big)\Big) \\
 =&G^{(-1)}\Big(\frac{G(u) +G(v)}{p}\Big) \\
 =&G^{-1}\Big(\frac{G(u) +G(v)}{p}\Big)
\quad \text{(by $G(u) +G(v) < G(0)$)} \\
 =&(T(u, v))_{T}^{(\frac{1}{p})}.
\end{align*}

2.2) For any $\frac{q}{p}\in \mathbb{Q}^{+}$,
according to the above 2.1), we have
$(T(u, v))_{T}^{(\frac{q}{p})}
 =((T(u, v))_{T}^{(\frac{1}{p})})_{T}^{(q)}
 =(T(u_{T}^{(\frac{1}{p})}, v_{T}^{(\frac{1}{p})}))_{T}^{(q)}
 =T((u_{T}^{(\frac{1}{p})})_{T}^{(q)}, (v_{T}^{(\frac{1}{p})})_{T}^{(q)})
 =T(u_{T}^{(\frac{q}{p})}, v_{T}^{(\frac{q}{p})})$.
}

2.3) For any $t\in (0, +\infty)\backslash \mathbb{Q}$,
by Definition~\ref{x-Power-Def} and the above 2.2),
since $T$ is continuous and increasing, we have
$(T(u, v))_{T}^{(t)}=\inf\{(T(u, v))_{T}^{(r)}\mid r\in [0, t]\cap\mathbb{Q}\}
=\inf\{T(u_{T}^{(r)}, v_{T}^{(r)})\mid r\in [0, t]\cap \mathbb{Q}\}
=T(u_{T}^{(t)}, v_{T}^{(t)})$.
\end{proof}

\begin{corollary}
\label{Strict-Power-Stab-Thm}
Let $T$ be a strict t-norm. Then, for any $u$, $v\in [0, 1]$ and
any $t\in (0, +\infty)$, we have $(T(u, v))_{T}^{(t)}=T(u_{T}^{(t)}, v_{T}^{(t)})$.
\end{corollary}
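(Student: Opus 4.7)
The plan is to reduce everything to Theorem~\ref{th-7.5}, which already handles the case $T\in\mathscr{T}_{_{\mathrm{ConA}}}$ split into $T(u,v)=0$ and $T(u,v)\in(0,1]$. Since every strict t-norm is Archimedean by Lemma~\ref{Strict->Archi} and strict t-norms are continuous by definition, we have $T\in\mathscr{T}_{_{\mathrm{ConA}}}$, so Theorem~\ref{th-7.5} applies and the only thing to address is the boundary case where $T(u,v)=0$.

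First I would observe that for a strict t-norm, the strict monotonicity in condition (1) of the strictness definition forces $T(u,v)=0$ to be equivalent to $u=0$ or $v=0$: indeed if both $u>0$ and $v>0$, strict monotonicity gives $T(u,v)>T(u,0)=0$. This is the key structural fact separating the strict case from the general continuous Archimedean case (where nilpotent elements may produce $T(u,v)=0$ with $u,v>0$).

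Then the proof splits into two cases. In the case $T(u,v)\in(0,1]$, Theorem~\ref{th-7.5} immediately yields $(T(u,v))_{T}^{(t)}=T(u_{T}^{(t)},v_{T}^{(t)})$, so there is nothing more to do. In the case $T(u,v)=0$, by the observation above we may assume without loss of generality $u=0$; then $u_{T}^{(t)}=0_{T}^{(t)}=0$ by Proposition~\ref{0-Power-Pro}, hence $T(u_{T}^{(t)},v_{T}^{(t)})=T(0,v_{T}^{(t)})=0$, while $(T(u,v))_{T}^{(t)}=0_{T}^{(t)}=0$, so both sides agree.

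There is no real obstacle here; the corollary is essentially a packaging of Theorem~\ref{th-7.5} together with the fact that strict t-norms have no zero divisors. The only care needed is to verify that $T(u,v)=0$ on a strict t-norm truly forces $u=0$ or $v=0$, which is a one-line consequence of strict monotonicity plus $T(u,0)=0$.
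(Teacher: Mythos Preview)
Your proof is correct and follows essentially the same approach as the paper's own proof: apply Theorem~\ref{th-7.5} (via Lemma~\ref{Strict->Archi}) for the case $T(u,v)\in(0,1]$, and in the case $T(u,v)=0$ use strictness to conclude $u=0$ or $v=0$, whence both sides vanish by Proposition~\ref{0-Power-Pro}. The only cosmetic difference is that the paper phrases the final step as ``$u_{T}^{(t)}\equiv 0$ or $v_{T}^{(t)}\equiv 0$'' and cites Theorem~\ref{th-7.5} once more, while you invoke Proposition~\ref{0-Power-Pro} and $T(0,\cdot)=0$ directly.
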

\begin{proof}
If $T(u, v)\in (0, 1]$, by Lemma~\ref{Strict->Archi} and Theorem~\ref{th-7.5},
it is clear that $(T(u, v))_{T}^{t}=T(u_{T}^{(t)}, v_{T}^{(t)})$. If $T(u, v)=0$,
since $T$ is a strict t-norm, we have $u=0$ or $v=0$, and thus $u_{T}^{(t)}\equiv 0$
or $v_{T}^{(t)}\equiv 0$. This, together with Theorem~\ref{th-7.5}, implies that
$T(u_{T}^{(t)}, v_{T}^{(t)})=0=(T(u, v))_{T}^{(t)}$.
\end{proof}

\begin{corollary}\label{co-7.3}
Let $T\in \mathscr{T}_{_{\mathrm{ConA}}}$. If $T$ is not strict,
then for any $a\in (0, 1)$, there exists $N \in \mathbb{N}$ satisfying the following:
\begin{enumerate}[{\rm (i)}]
\item $a_{T}^{(\frac{N}{2})}> 0$;

\item $a_{T}^{(N)} =(T(a_{T}^{(\frac{N}{2})}, a_{T}^{(\frac{N}{2})}))_{T}^{(\frac{1}{2})}
     =((a_{T}^{(\frac{N}{2})})_{T}^{(2)})_{T}^{(\frac{1}{2})} =0$;

\item $T((a_{T}^{(\frac{N}{2})})_{T}^{(\frac{1}{2})},
(a_{T}^{(\frac{N}{2})})_{T}^{(\frac{1}{2})})
 =((a_{T}^{(\frac{N}{2})})_{T}^{(\frac{1}{2})})_{T}^{(2)} >0$.
\end{enumerate}
In particular, there exist $u$, $v\in [0, 1]$ and $t\in (0, 1)$
such that $(T(u, v))_{T}^{(t)} \neq T(u_{T}^{(t)}, v_{T}^{(t)})$.
\end{corollary}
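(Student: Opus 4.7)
The plan is to use the fact that a non-strict continuous Archimedean t-norm is nilpotent (Lemma~\ref{Strict-Nil-Char}), so it has a continuous additive generator $G$ with $G(0)<+\infty$. Given $a\in(0,1)$, we have $0<G(a)<G(0)$, so $G(0)/G(a)>1$. The natural candidate is to pick $N=\lceil G(0)/G(a)\rceil$. Then simultaneously $N\cdot G(a)\geq G(0)$ (by the ceiling) and $N<G(0)/G(a)+1$, and since $G(0)/G(a)>1$ this latter bound is strictly below $2G(0)/G(a)$, giving $(N/2)\cdot G(a)<G(0)$. This is precisely the pinch needed to make $a_T^{(N/2)}$ strictly positive while $a_T^{(N)}$ collapses to $0$.

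For (i), Proposition~\ref{Power-t-Thm} gives $a_T^{(N/2)}=G^{-1}(\min\{(N/2)G(a),G(0)\})=G^{-1}((N/2)G(a))$, which lies in $(0,1)$ because its argument is strictly less than $G(0)$. For (ii), the same proposition yields $a_T^{(N)}=G^{(-1)}(\min\{NG(a),G(0)\})=G^{(-1)}(G(0))=0$. The chain of equalities in (ii) is then routine: by Proposition~\ref{Power-Mon-Thm}, $T(a_T^{(N/2)},a_T^{(N/2)})=(a_T^{(N/2)})_T^{(2)}=a_T^{(N/2+N/2)}=a_T^{(N)}=0$, and then Proposition~\ref{0-Power-Pro} gives $0_T^{(1/2)}=0$. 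For (iii), formula~\eqref{eq-7.4} applied with $m=2$ to $u=a_T^{(N/2)}$ yields $((a_T^{(N/2)})_T^{(1/2)})_T^{(2)}=a_T^{(N/2)}>0$, and this in turn equals $T((a_T^{(N/2)})_T^{(1/2)},(a_T^{(N/2)})_T^{(1/2)})$ by the definition of the $2$-power.

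For the ``in particular'' statement, we set $u=v=a_T^{(N/2)}$ and $t=1/2$. From (ii) we read off $(T(u,v))_T^{(1/2)}=((a_T^{(N/2)})_T^{(2)})_T^{(1/2)}=0$, while from (iii) we have $T(u_T^{(1/2)},v_T^{(1/2)})=((a_T^{(N/2)})_T^{(1/2)})_T^{(2)}=a_T^{(N/2)}>0$. These two quantities are distinct, furnishing the desired counterexample to equation~\eqref{eq-Power-Stab}.

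The only genuinely delicate step is the integer-selection argument in the preliminary reduction: we need an $N\in\mathbb{N}$ lying in the half-open interval $[G(0)/G(a),\,2G(0)/G(a))$, and it is the strict inequality $G(0)/G(a)>1$ (equivalently the nilpotency hypothesis together with $a<1$) that guarantees this interval has length greater than $1$ and hence contains an integer. Once $N$ is in hand, the rest is a direct application of the explicit formula $u_T^{(t)}=G^{(-1)}(\min\{t\cdot G(u),G(0)\})$ from Proposition~\ref{Power-t-Thm}, together with the basic identities $(u_T^{(1/m)})_T^{(m)}=u$ and $T(u_T^{(t_1)},u_T^{(t_2)})=u_T^{(t_1+t_2)}$ already established.
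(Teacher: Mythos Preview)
Your proof is correct and essentially the same as the paper's. The paper chooses $N=\min\{n\in\mathbb{N}\mid a_T^{(n)}=0\}$ abstractly from nilpotency and argues $a_T^{(N/2)}\geq a_T^{(N-1)}>0$ via $N/2\leq N-1$; your explicit choice $N=\lceil G(0)/G(a)\rceil$ via the additive generator yields the very same integer (since $a_T^{(n)}=0\iff nG(a)\geq G(0)$), and the verifications of (ii), (iii), and the final counterexample proceed identically in both arguments.
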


\begin{proof}
Since $T$ is not strict, by Lemma~\ref{Strict-Nil-Char}, $T$
is nilpotent, i.e., each $a\in (0,1)$ is a nilpotent element of $T$.
Fix $a\in (0, 1)$ and take $N=\min\{n \in \mathbb{N} \mid a_{T}^{(n)}=0\}$.
Clearly, $\frac{N}{2} \leq N-1$. This, together with
Lemma~\ref{le-7.8}, implies that
$a_{T}^{(\frac{N}{2})}\geq a_{T}^{(N-1)}>0$. Thus,
$(T(a_{T}^{(\frac{N}{2})}, a_{T}^{(\frac{N}{2})}))_{T}^{(\frac{1}{2})}
=0_{T}^{(\frac{1}{2})}=0$ and
$T((a_{T}^{(\frac{N}{2})})_{T}^{(\frac{1}{2})},
(a_{T}^{(\frac{N}{2})})_{T}^{(\frac{1}{2})})
=((a_{T}^{(\frac{N}{2})})_{T}^{(\frac{1}{2})})_{T}^{(2)}
=a_{T}^{(\frac{N}{2})}>0.$
\end{proof}


\begin{theorem}
\label{Power-Stab-Char}
For a continuous t-norm $T$, the following
statements are equivalent:
\begin{enumerate}[{\rm (i)}]
  \item $T$ is power stable, i.e.,
the formula~\eqref{eq-Power-Stab} holds for all $(u, v)\in[0, 1]$ and all $t\in(0, +\infty)$.
  \item For any $u\in [0, 1]$, $u$ is power stable,
i.e., for any $t_{1}$, $t_{2}\in(0, +\infty), (u_{T}^{(t_{1})})_{T}^{(t_{2})}=u_{T}^{(t_{1}t_{2})}$.
  \item One of the following statements holds:
  \begin{enumerate}
    \item[{\rm iii-1)}] $T=T_{\mathbf{M}}$;
    \item[{\rm iii-2)}] $T$ is a strict t-norm;
    \item[{\rm iii-3)}] $T$ is an ordinal sum of strict t-norms,
i.e., there exists a (finite or countably infinite) index set $\mathscr{A}$,
a family of strict t-norms $\left\{T_{\lambda}\right\}_{\lambda\in \mathscr{A}}$,
and a family of pairwise disjoint open subintervals $\left\{(a_{\lambda}, e_{\lambda})
\right\}_{\lambda\in \mathscr{A}}$ of $[0, 1]$, such that
\begin{equation}
\label{eq-7.22}
T(u, v)= \begin{cases}
a_{\lambda} +(e_{\lambda} -a_{\lambda}) \cdot T_{\lambda}
\left(\frac{u -a_{\lambda}}{e_{\lambda} -a_{\lambda}},
\frac{v -a_{\lambda}}{e_{\lambda} -a_{\lambda}}\right), & (u, v)  \in [a_{\lambda}, e_{\lambda}]^{2}, \\
\min\{u, v\}, & \text{otherwise}.
\end{cases}
\end{equation}
\end{enumerate}
\end{enumerate}
\end{theorem}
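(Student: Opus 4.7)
The plan is to prove the cycle $\mathrm{(iii)}\Rightarrow\mathrm{(i)}\Rightarrow\mathrm{(ii)}\Rightarrow\mathrm{(iii)}$, since each direct implication has a natural tool already established.

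For $\mathrm{(iii)}\Rightarrow\mathrm{(i)}$: I would split on the three sub-cases. If $T=T_{\mathbf{M}}$, the identity~\eqref{eq-Power-Stab} is exactly Lemma~\ref{le-7.12}; if $T$ is strict, it is Corollary~\ref{Strict-Power-Stab-Thm}. For $T$ an ordinal sum of strict t-norms $\{T_{\lambda}\}$, I would fix $(u,v)\in[0,1]^{2}$ and split on whether $(u,v)\in[a_{\lambda},e_{\lambda}]^{2}$ for some $\lambda$. Inside a common summand square, Theorem~\ref{Power-Formula-Xinxing}(1) lets me transport the identity to the strict component $T_{\lambda}$ and invoke Corollary~\ref{Strict-Power-Stab-Thm}. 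Otherwise $T(u,v)=\min\{u,v\}$; then the explicit form of Theorem~\ref{Power-Formula-Xinxing} keeps each of $u_{T}^{(t)}$, $v_{T}^{(t)}$ in the summand/idempotent region of its argument, so the pair $(u_{T}^{(t)},v_{T}^{(t)})$ still avoids every square $[a_{\lambda},e_{\lambda}]^{2}$, and monotonicity (Proposition~\ref{Power-Increasing-Pro-1}) yields $T(u_{T}^{(t)},v_{T}^{(t)})=\min\{u_{T}^{(t)},v_{T}^{(t)}\}=(T(u,v))_{T}^{(t)}$.

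For $\mathrm{(i)}\Rightarrow\mathrm{(ii)}$: Setting $u=v$ in~\eqref{eq-Power-Stab} and using Proposition~\ref{Power-Mon-Thm} gives $(u_{T}^{(2)})_{T}^{(t)}=(u_{T}^{(t)})_{T}^{(2)}=u_{T}^{(2t)}$. An induction on $n$ using $u_{T}^{(n)}=T(u_{T}^{(n-1)},u)$ together with iterated application of~\eqref{eq-Power-Stab} upgrades this to $(u_{T}^{(n)})_{T}^{(t)}=u_{T}^{(nt)}$ for every $n\in\mathbb{N}$. To reach the reciprocal exponent, I substitute into the identity $(u_{T}^{(1/n)})_{T}^{(n)}=u$ from formula~\eqref{eq-7.4} and apply the natural-number rule just proved, which forces $(u_{T}^{(1/n)})_{T}^{(t)}=u_{T}^{(t/n)}$. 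Combining these cases covers all rational $t_{1}$, and the continuity of $u_{T}^{(\_)}$ on $(0,+\infty)$ (Corollary~\ref{Power-Continuous-Thm}) then extends the identity to every real $t_{1}>0$.

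For $\mathrm{(ii)}\Rightarrow\mathrm{(iii)}$: By Lemma~\ref{Cont-Char-Thm}, $T$ is either $T_{\mathbf{M}}$, in $\mathscr{T}_{_{\mathrm{ConA}}}$, or an ordinal sum of t-norms in $\mathscr{T}_{_{\mathrm{ConA}}}$, so the task is to show that every Archimedean summand is strict. In the purely Archimedean case, I argue by contradiction: if $T$ is not strict, it is nilpotent (Lemma~\ref{Strict-Nil-Char}), and Corollary~\ref{co-7.3} delivers $a\in(0,1)$ and $N\in\mathbb{N}$ with $b:=a_{T}^{(N/2)}>0$ while $b_{T}^{(2)}=a_{T}^{(N)}=0$; power stability of $b$ yields $b=b_{T}^{(1)}=(b_{T}^{(2)})_{T}^{(1/2)}=0_{T}^{(1/2)}=0$, contradiction. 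In the ordinal-sum case, I run the same scheme inside any hypothetically non-strict summand $T_{\lambda_{0}}$: pick a witness $a\in(0,1)$ inside $T_{\lambda_{0}}$ from Corollary~\ref{co-7.3}, rescale to $\tilde a=a_{\lambda_{0}}+(e_{\lambda_{0}}-a_{\lambda_{0}})a$, use Theorem~\ref{Power-Formula-Xinxing} to conclude $\tilde a_{T}^{(N)}=a_{\lambda_{0}}$ and $\tilde b:=\tilde a_{T}^{(N/2)}\in(a_{\lambda_{0}},e_{\lambda_{0}})$, and then derive the contradiction $\tilde b=(\tilde b_{T}^{(2)})_{T}^{(1/2)}=(a_{\lambda_{0}})_{T}^{(1/2)}=a_{\lambda_{0}}$ from power stability of $\tilde b$ and the idempotence of $a_{\lambda_{0}}$ (Proposition~\ref{pr-7.3}).

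I expect the $\mathrm{(ii)}\Rightarrow\mathrm{(iii)}$ step, specifically the ordinal-sum sub-case, to be the main obstacle: the argument needs to combine the internal nilpotent-versus-strict dichotomy inside each summand with the global idempotence of the summand endpoints, and the bookkeeping around the rescaling in Theorem~\ref{Power-Formula-Xinxing} must be handled carefully so that the witness $\tilde b$ really lies strictly above $a_{\lambda_{0}}$ while its square collapses exactly onto $a_{\lambda_{0}}$.
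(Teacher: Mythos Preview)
Your proposal is correct, and the cycle $(\mathrm{iii})\Rightarrow(\mathrm{i})\Rightarrow(\mathrm{ii})\Rightarrow(\mathrm{iii})$ is tighter than the paper's scheme. The paper proves $(\mathrm{iii})\Rightarrow(\mathrm{i})$, then $(\mathrm{i})\Rightarrow(\mathrm{iii})$ and $(\mathrm{ii})\Rightarrow(\mathrm{iii})$ separately, and finally $(\mathrm{i})+(\mathrm{iii})\Rightarrow(\mathrm{ii})$; that last step detours through the ordinal-sum structure and relies on Proposition~\ref{pr-7.6} (the Archimedean-specific identity $(u_T^{(1/p)})_T^{(1/p')}=u_T^{(1/pp')}$) to handle the rational case. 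Your direct argument for $(\mathrm{i})\Rightarrow(\mathrm{ii})$---establishing $(w_T^{(n)})_T^{(t)}=w_T^{(nt)}$ by induction from~\eqref{eq-Power-Stab} and Proposition~\ref{Power-Mon-Thm}, then substituting $w=u_T^{(1/n)}$ and invoking~\eqref{eq-7.4} to recover the reciprocal exponent---avoids that detour entirely and is more elegant. One small point to make explicit: the continuity extension to irrational $t_1$ needs not only Corollary~\ref{Power-Continuous-Thm} but also the monotonicity of $(\_)_T^{(t_2)}$ in the base (Proposition~\ref{Power-Increasing-Pro-1}), so that $(u_T^{(t_1)})_T^{(t_2)}$ can be squeezed between $u_T^{(rt_2)}$ for rational $r\nearrow t_1$ and $r'\searrow t_1$; the paper spells this out in its own limit argument. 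Your $(\mathrm{iii})\Rightarrow(\mathrm{i})$ and $(\mathrm{ii})\Rightarrow(\mathrm{iii})$ match the paper's approach in substance, though the paper handles the boundary cases ($T(u,v)=a_{\lambda_0}$ or $e_{\lambda_0}$ inside a summand square) more explicitly than your sketch does.
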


\begin{proof}
(iii)$\Longrightarrow$(i).

\medskip

 iii-1)$\Longrightarrow$(i) and iii-2)$\Longrightarrow$(i) follow directly
 from Lemma~\ref{le-7.12} and  Corollary~\ref{Strict-Power-Stab-Thm}.

\medskip

 iii-3)$\Longrightarrow$(i). For any $u$, $v\in [0, 1]$ and
 $t>0$, formula~\eqref{eq-Power-Stab} always holds when $u=0$ or $v=0$;
 otherwise, consider the following cases:

\begin{itemize}
\item If $(u, v)\in\cup_{\lambda\in \mathscr{A}}\left[a_{\alpha}, e_{\lambda}\right]^{2}$,
then there exists $\lambda_{0} \in \mathscr{A}$ such that
$(u, v)\in\left[a_{\lambda_{0}}, e_{\lambda_{0}}\right]^{2}$.
Clearly, $T(u, v)\in \left[a_{\lambda_{0}}, e_{\lambda_{0}}\right]$.
\begin{itemize}
\item If $T(u, v)=a_{\lambda_{0}}$, since $T_{\lambda_{0}}$ is strict,
$u=a_{\lambda_{0}}$ or $v=a_{\lambda_{0}}$. This, together with
Proposition~\ref{pr-7.3} and $a_{\lambda_{0}}\in \mathfrak{I}_{_T}$,
implies that, for any $t> 0$,
$$
(T(u, v))_{T}^{(t)} =a_{\lambda_{0}} \text{ and }
(u_{T}^{(t)} =a_{\lambda_{0}} \text{ or } v_{T}^{(t)} =a_{\lambda_{0}}),
$$
and thus $(T(u, v))_{T}^{(t)}=a_{\lambda_{0}}=T(u_{T}^{(t)}, v_{T}^{(t)})$.

\item If $T(u, v)=e_{\lambda_{0}}$, then $u=v=e_{\lambda_{0}}$. This, together
with Proposition~\ref{pr-7.3} and $e_{\lambda_{0}}\in \mathfrak{I}_{T}$,
implies that, for any $t> 0$, $T(u_{T}^{(t)}, v_{T}^{(t)})
 =T(e_{\lambda_{0}}, e_{\lambda_{0}}) =e_{\lambda_{0}}
 =(e_{\lambda_{0}})_{T}^{(t)} =(T(u, v))_{T}^{(t)}$.

\item If $T(u, v)\in(a_{\lambda_{0}}, e_{\lambda_{0}})$, then
$u\in(a_{\lambda_{0}}, e_{\lambda_{0}}]$ and $v\in(a_{\lambda_{0}}, e_{\lambda_{0}}]$.
By formula~\eqref{eq-7.23}, since $T_{\lambda_{0}}$ is strict,
for any $p\in\mathbb{N}$, we have
\begin{align*}
&\quad (T(u, v))_{T}^{(\frac{1}{p})}\\
& =a_{\lambda_{0}} +(e_{\lambda_{0}} -a_{\lambda_{0}}) \cdot\Big(\frac{T(u, v) -a_{\lambda_{0}}}{e_{\lambda_{0}} -a_{\lambda_{0}}}\Big)_{T_{\lambda_{0}}}^{(\frac{1}{p})}~~\text{(by Eq.~\eqref{eq-7.23})} \\
& =a_{\lambda_{0}} +(e_{\lambda_{0}} -a_{\lambda_{0}})\cdot
\Big(T_{\lambda_{0}}\Big(\frac{u -a_{\lambda_{0}}}{e_{\lambda_{0}} -a_{\lambda_{0}}},
\frac{v -a_{\lambda_{0}}}{e_{\lambda_{0}}
 -a_{\lambda_{0}}}\Big)\Big)_{T_{\lambda_{0}}}^{(\frac{1}{p})}
~~\text{(by Eq.~\eqref{eq-7.22})} \\
& =a_{\lambda_{0}} +(e_{\lambda_{0}} -a_{\lambda_{0}}) \cdot T_{\lambda_{0}}\Big(\Big(\frac{u -a_{\lambda_{0}}}{e_{\lambda_{0}}
 -a_{\lambda_{0}}}\Big)_{T_{\lambda_{0}}}^{(\frac{1}{p})},
\Big(\frac{v -a_{\lambda_{0}}}{e_{\lambda_{0}} -a_{\lambda_{0}}}\Big)
_{T_{\lambda_{0}}}^{(\frac{1}{p})}\Big)~~\text{(by Corollary~\ref{Strict-Power-Stab-Thm})} \\
& =a_{\lambda_{0}} +(e_{\lambda_{0}} -a_{\lambda_{0}}) \cdot
T_{\lambda_{0}}\Big(\frac{u_{T}^{(\frac{1}{p})} -a_{\lambda_{0}}}{e_{\lambda_{0}}
-a_{\lambda_{0}}}, \frac{v_{T}^{(\frac{1}{p})} -a_{\lambda_{0}}}{e_{\lambda_{0}} -a_{\lambda_{0}}}\Big)
~~\text{(by Eq.~\eqref{eq-7.23})}\\
& =T\Big(u_{T}^{(\frac{1}{p})}, v_{T}^{(\frac{1}{p})}\Big)
\quad \text{(by Eq.~\eqref{eq-7.22})}.
\end{align*}
Then, for any $\frac{q}{p}\in\mathbb{Q}^{+}$, we have
$(T(u, v))_{T}^{(\frac{q}{p})}
 =((T(u, v))_{T}^{(\frac{1}{p})})_{T}^{(q)}
 =(T (u_{T}^{(\frac{1}{p})}, v_{T}^{(\frac{1}{p})}))_{T}^{(q)}
 =T ((u_{T}^{(\frac{1}{p})})_{T}^{(q)}, (v_{T}^{(\frac{1}{p})})_{T}^{(q)})
 =T (u_{T}^{(\frac{q}{p})}, v_{T}^{(\frac{q}{p})})$.
Further, for any $t> 0$, since $T$ is increasing and continuous, we have
\begin{align*}
(T(u, v))_{T}^{(t)}
& =\inf \left\{(T(u, v))_{T}^{(r)} \mid r \in [0, t] \cap \mathbb{Q}\right\} \\
& =\inf \left\{T(u_{T}^{(r)}, v_{T}^{(r)}) \mid r\in [0, t] \cap \mathbb{Q} \right\} \\
& =T\Big(\mathrm{inf}\{u_{T}^{(r)} \mid r\in [0, t] \cap \mathbb{Q}\},\\
&\quad \quad~~
\inf\{v_{T}^{(r)} \mid r\in [0, t]\cap \mathbb{Q}\}\Big) \\
& =T(u_{T}^{(t)}, v_{T}^{(t)}).
\end{align*}
\end{itemize}
\end{itemize}
\begin{itemize}
\item If $(u, v)\in(0, 1]^2\backslash \cup_{\lambda\in \mathscr{A}}
\left[a_{\lambda}, e_{\lambda}\right]^{2}$, by formula~\eqref{eq-7.22},
$T(u, v)=\min\{u, v\}$. Without loss of generality, assume that
$u\leq v$. Then, $T(u, v)=u$. For any $t>0$, consider the following
two subcases:
\begin{itemize}

\item If $u\in[0, 1]\backslash \cup_{\lambda\in \mathscr{A}}
\left[a_{\lambda}, e_{\lambda}\right]$, then
$T(u, u) =\min\{u, u\}=u$. i.e., $u\in\mathfrak{I}_{T}$.
This, together with Proposition~\ref{pr-7.3}, implies that
$(T(u, v))_{T}^{(t)}=u_{T}^{(t)}=u$. Furthermore, by
Proposition~\ref{Power-Increasing-Pro-1}, since $T$ is increasing,
$T(u_{T}^{(t)}, v_{T}^{(t)})=T(u, v_{T}^{(t)})\geq T(u, u)=u$.
Clearly, $T(u_{T}^{(t)}, v_{T}^{(t)})\leq u_{T}^{(t)}=u$,
and thus $T(u_{T}^{(t)}, v_{T}^{(t)})=u=(T(u, v))_{T}^{(t)}$.

\item If $u\in \cup_{\lambda\in \mathscr{A}}\left[a_{\lambda}, e_{\lambda}\right]$,
i.e., $u \in \left[a_{\lambda_{0}}, e_{\lambda_{0}}\right]$ for some $\lambda_{0}\in \mathscr{A}$,
by $(u, v) \in (0, 1]^2\backslash \cup_{\lambda\in \mathscr{A}}\left[a_{\lambda}, e_{\lambda}\right]^{2}$,
we have $v \in [0, 1]\backslash [a_{\lambda_{0}}, e_{\lambda_{0}}]$,
i.e., $v <a_{\lambda_{0}}$ or $v >e_{\lambda_{0}}$, and thus $v >e_{\lambda_{0}}$ since $u\leq v$.
Therefore, by Propositions~\ref{Power-Increasing-Pro-1} and \ref{pr-7.3}
and $e_{\lambda_{0}}\in\mathfrak{I}_{T}$, we have
$v_{T}^{(t)}\geq(e_{\lambda_{0}})_{T}^{(t)}=e_{\lambda_{0}}.$
This, together with the fact that $T$ is increasing, implies that
$u_{T}^{(t)}\geq T(u_{T}^{(t)}, v_{T}^{(t)})\geq
T(u_{T}^{(t)}, e_{\lambda_{0}})=u_{T}^{(t)}$,
i.e., $T(u_{T}^{(t)}, v_{T}^{(t)}) =u_{T}^{(t)}$.
Clearly, $(T(u, v))_{T}^{(t)} =u_{T}^{(t)}$.
Hence, $(T(u, v))_{T}^{(t)} =T(u_{T}^{(t)}, v_{T}^{(t)})$.
\end{itemize}
\end{itemize}

Summing up the above, it is concluded formula~\eqref{eq-Power-Stab}
holds for all $(u, v)\in [0, 1]$ and all $t >0$.

\medskip

(i)$\Longrightarrow$(iii).

\medskip

Sine $T$ is continuous, by Lemma~\ref{Cont-Char-Thm}, one of the following statements holds:

iii-1$^{\prime}$) $T=T_{\mathbf{M}}$;

iii-2$^{\prime}$) $T\in \mathscr{T}_{_{\mathrm{ConA}}}$;

iii-3$^{\prime}$) $T$ is an ordinal sum
  of continuous Archimedean t-norms, i.e.,
  $T=(\langle a_{\lambda}, a_{\lambda}, T_{\lambda}\rangle)_{\lambda\in \mathscr{A}}$,
  where $\left\{T_{\lambda}\right\}_{\lambda\in \mathscr{A}}$ is a family t-norms
  in $\mathscr{T}_{_{\mathrm{ConA}}}$.

\begin{itemize}
\item If iii-1$^{\prime}$) holds, then iii-1) holds.
\item If iii-2$^{\prime}$) holds, by Corollary~\ref{co-7.3},
$T$ is strict, i.e., iii-2) holds.
\item If iii-3$^{\prime}$) holds, it can be shown that each
$T_{\lambda}$ ($\lambda\in \mathscr{A}$) is strict. In fact, suppose on the contrary that there
exists some $\lambda_{0}\in \mathscr{A}$ such that $T_{\lambda_{0}}$ is not strict. Fix any
$a\in (0, 1)$, by Corollary~\ref{co-7.3}, there exists $N\in \mathbb{N}$
satisfying the following:
\begin{enumerate}[(a)]
\item $a_{T_{\lambda_{0}}}^{(\frac{N}{2})}>0$;
\item $a_{T_{\lambda_{0}}}^{(N)}=(T_{\lambda_{0}}(a_{T_{\lambda_{0}}}^{(\frac{N}{2})},
a_{T_{\lambda_{0}}}^{(\frac{N}{2})}))_{T_{\lambda_{0}}}^{(\frac{1}{2})}=0$;
\item $T_{\lambda_{0}}((a_{T_{\lambda_{0}}}^{(\frac{N}{2})})_{T_{\lambda_{0}}}^{(\frac{1}{2})}, (a_{T_{\lambda_{0}}}^{(\frac{N}{2})})_{T_{\lambda_{0}}}^{(\frac{1}{2})})>0$.
\end{enumerate}
Take $\hat{x}=\hat{y}=a_{\lambda_{0}}+(e_{\lambda_{0}}-a_{\lambda_{0}})\cdot
a_{T_{{\lambda_{0}}}}^{(\frac{N}{2})}$. Clearly, $\hat{x}$, $\hat{y}\in(a_{\lambda_{0}},
e_{\lambda_{0}})$. First, it can be verified that
\begin{equation}
\label{eq-7.26}
\begin{split}
&(T(\hat{x}, \hat{y}))_{T}^{(\frac{1}{2})}\\
 =&\Big(a_{\lambda_{0}} +(e_{\lambda_{0}} -a_{\lambda_{0}}) \cdot T_{\lambda_{0}}
\Big(\frac{\hat{x} -a_{\lambda_{0}}}{e_{\lambda_{0}} -a_{\lambda_{0}}},
\frac{\hat{y} -a_{\lambda_{0}}}{e_{\lambda_{0}} -a_{\lambda_{0}}}\Big)\Big)_{T}^{(\frac{1}{2})} \\
  =&\left(a_{\lambda_{0}} +(e_{\lambda_{0}} -a_{\lambda_{0}}) \cdot
T_{\lambda_{0}}\left(a_{T_{\lambda_{0}}}^{(\frac{N}{2})},
a_{T_{\lambda_{0}}}^{(\frac{N}{2})}\right)\right)_{T}^{(\frac{1}{2})} \\
  =&\left(a_{\lambda_{0}} +(e_{\lambda_{0}} -a_{\lambda_{0}})\cdot
a_{T_{\lambda_{0}}}^{(N)}\right)_{T}^{(\frac{1}{2})} \\
  =&(a_{\lambda_{0}})_{T}^{(\frac{1}{2})} =a_{\lambda_{0}}.
\end{split}
\end{equation}
For any $z\in[0, 1]$ such that $z_{T}^{(2)}=\hat{x} \in (a_{\lambda_{0}} ,e_{\lambda_{0}})$,
noting that
\begin{align*}
z_{T}^{(2)}& =a_{\lambda_{0}} +(e_{\lambda_{0}} -a_{\lambda_{0}})\cdot T_{\lambda_{0}}
\left(\frac{z -a_{\lambda_{0}}}{e_{\lambda_{0}} -a_{\lambda_{0}}},
\frac{z -a_{\lambda_{0}}}{e_{\lambda_{0}} -a_{\lambda_{0}}}\right)\\
& =a_{\lambda_{0}} +(e_{\lambda_{0}} -a_{\lambda_{0}})
 \cdot\bigg(\frac{z -a_{\lambda_{0}}}{e_{\lambda_{0}}
 -a_{\lambda_{0}}}\bigg)_{T_{\lambda_{0}}}^{(2)},
\end{align*}
we have
$\left(\frac{z-a_{\lambda_{0}}}{e_{\lambda_{0}}-a_{\lambda_{0}}}\right)_{T_{\lambda_{0}}}^{(2)}
=a_{T_{\lambda_{0}}}^{(\frac{N}{2})}>0$.
Since $T_{\lambda_{0}}$ is a continuous Archimedean t-norm,
by Corollary~\ref{Corollary-7.1}, we get
$z =a_{\lambda_{0}} +(e_{\lambda_{0}} -a_{\lambda_{0}}) \cdot
\left(a_{T_{\lambda_{0}}}^{(\frac{N}{2})}\right)_{T_{\lambda_{0}}}^{(\frac{1}{2})}$.
This, together with Lemma~\ref{Power-Dual-Thm} (ii), implies that
\begin{align*}
\hat{x}_{T}^{(\frac{1}{2})}& =\hat{y}_{T}^{(\frac{1}{2})} =\min\{z\in[0, 1]\mid z_{T}^{(2)} =\hat{x}\}\\
& =a_{\lambda_{0}} +(e_{\lambda_{0}} -a_{\lambda_{0}}) \cdot
\left(a_{T_{\lambda_{0}}}^{(\frac{N}{2})}\right)_{T_{\lambda_{0}}}^{(\frac{1}{2})}
\in[a_{\lambda_{0}}, e_{\lambda_{0}}].
\end{align*}
Therefore,
\begin{equation}
\label{eq-7.27}
\begin{split}
& T(\hat{x}_{T}^{(\frac{1}{2})}, \hat{y}_{T}^{(\frac{1}{2})})\\
 =& a_{\lambda_{0}} +(e_{\lambda_{0}} -a_{\lambda_{0}}) \cdot T_{a_{\lambda_{0}}}
\Big(\Big(a_{T_{\lambda_{0}}}^{(\frac{N}{2})}
\Big)_{T_{\lambda_{0}}}^{(\frac{1}{2})},
\Big(a_{T_{\lambda_{0}}}^{(\frac{N}{2})}\Big)_{T_{\lambda_{0}}}^{(\frac{1}{2})}\Big)\\
 =&a_{\lambda_{0}} +(e_{\lambda_{0}} -a_{\lambda_{0}}) \cdot a_{T_{\lambda_{0}}}^{(\frac{N}{2})}\\
 >&a_{\lambda_{0}} =(T(\hat{x}, \hat{y}))_{T}^{(\frac{1}{2})} \quad
\text{(by $a_{T_{\lambda_{0}}}^{(\frac{N}{2})}>0$)},
\end{split}
\end{equation}
which contradicts with (i).
\end{itemize}

(ii)$\Longrightarrow$(iii). Similarly to the proof of (i)$\Longrightarrow$(iii),
it suffices to prove that iii-3$^{\prime}$)$+$(ii)$\Longrightarrow$iii-3),
since iii-1$^{\prime}$)$\Longrightarrow$iii-1) and
iii-3$^{\prime}$)$+$(ii)$\Longrightarrow$iii-3) hold trivially.

\medskip

If iii-3$^{\prime}$) holds, it can be shown that each
$T_{\lambda}$ ($\lambda\in \mathscr{A}$) is strict. In fact, suppose on the
contrary that there exists some $\lambda_{0}\in \mathscr{A}$ such that $T_{\lambda_{0}}$ is not strict.
By formulas~\eqref{eq-7.26} and \eqref{eq-7.27}, we have
$(\hat{x}_{T}^{(2)})_{T}^{(\frac{1}{2})}=a_{\lambda_{0}}<(\hat{x}_{T}^{(\frac{1}{2})})_{T}^{(2)}$,
which contradicts (ii).

\medskip

(i)+(iii)$\Longrightarrow$(ii). If (i) holds, by using mathematical induction,
it can be verified that, for any $u_{1}$, $u_{2}$, $\ldots, u_{k} \in [0, 1]$
and any $t> 0$,
\begin{equation}
\label{eq-7.28}
(T(u_{1}, u_{2}, \ldots, u_{k}))_{T}^{(t)} =
T((u_{1})_{T}^{(t)}, (u_{2})_{T}^{(t)}, \ldots, (u_{k})_{T}^{(t)}).
\end{equation}

If iii-1) or iii-2) holds, i.e., $T=T_{\mathbf{M}}$ or $T$ is strict,
by Propositions~\ref{pr-7.3}
and \ref{Power-t-Thm}, it is clear that (ii) holds. Otherwise, if iii-3)
holds, consider the following two cases for any fixed $u\in[0, 1]$.
\begin{itemize}
\item If $u\in[0, 1]\backslash \cup_{\lambda\in \mathscr{A}}(a_{\lambda}, e_{\lambda})$,
then $u$ is an idempotent element, i.e., $u\in \mathfrak{I}_{T}$.
By Proposition~\ref{pr-7.3}, $(u_{T}^{(t_{1})})_{T}^{(t_{2})} =u =
u_{T}^{(t_{1}t_{2})}$ holds for any $t_{1}$, $t_{2} >0$.

\item If $u \in \cup_{\lambda\in \mathscr{A}}(a_{\lambda}, e_{\lambda})$,
i.e., $u \in (a_{\lambda_{0}}, e_{\lambda_{0}})$ for some
$\lambda_{0}\in \mathscr{A}$, then

\begin{itemize}
\item For any $\frac{q_{1}}{p_{1}}$, $\frac{q_{2}}{p_{2}}\in \mathbb{Q}^{+}$,
we have
\begin{equation}
\label{eq-7.30}
\begin{split}
&\quad (u_{T}^{(\frac{q_{1}}{p_{1}})})_{T}^{(\frac{q_{2}}{p_{2}})}\\
& =(T^{(q_{1})}(u_{T}^{(\frac{1}{p_{1}})},\ldots,
u_{T}^{(\frac{1}{p_{1}})}))_{T}^{(\frac{q_{2}}{p_{2}})}\\
& =T^{(q_{1})}((u_{T}^{(\frac{1}{p_{1}})})_{T}^{(\frac{q_{2}}{p_{2}})},\ldots,
(u_{T}^{(\frac{1}{p_{1}})})_{T}^{(\frac{q_{2}}{p_{2}})})
~~\text{(by Eq.~\eqref{eq-7.28})}\\
&  =T^{(q_{1})}(((u_{T}^{(\frac{1}{p_{1}})})_{T}^{(\frac{1}{p_{2}})})_{T}^{(q_{2})},
\ldots, ((u_{T}^{(\frac{1}{p_{1}})})_{T}^{(\frac{1}{p_{2}})})_{T}^{(q_{2})})\\
&  =T^{(q_{1})}((u_{T}^{(\frac{1}{p_{1}p_{2}})})_{T}^{(q_{2})},\ldots,
(u_{T}^{(\frac{1}{p_{1}p_{2}})})_{T}^{(q_{2})})~~\text{(by Proposition~\ref{pr-7.6})}\\
&  =(u_{T}^{(\frac{1}{p_{1}p_{2}})})_{T}^{(q_{1}q_{2})}
 =u_{T}^{(\frac{q_{1}q_{2}}{p_{1}p_{2}})}
 =u_{T}^{(\frac{q_{1}}{p_{1}}\cdot\frac{q_{2}}{p_{2}})}.
\end{split}
\end{equation}

\item For any $t_{1}$, $t_{2}>0$, by Definition~\ref{x-Power-Def},
we have
$(u_{T}^{(t_{1})})_{T}^{(t_{2})} =(\inf\{u_{T}^{(r_{1})}\mid r_{1}\in
[0, t_{1}]\cap\mathbb{Q}\})_{T}^{(r_{2})} =\inf\{(\inf\{u_{T}^{(r_{1})}\mid r_{1}
\in[0, t_{1}]\cap\mathbb{Q}\})_{T}^{(r_{2})}\mid r_{2}\in [0, t_{2}]\cap \mathbb{Q}\}$.
 This, together with Proposition~\ref{Power-Increasing-Pro-1}, Lemma~\ref{le-7.8},
 and formula~\eqref{eq-7.30}, implies that
\begin{itemize}
\item For any $r_{1}\in[0, t_{1}]\cap \mathbb{Q}$ and any
$r_{2}\in[0, t_{2}]\cap\mathbb{Q}$, $(u_{T}^{(t_{1})})_{T}^{(t_{2})}
\leq (u_{T}^{(r_{1})})_{T}^{(r_{2})}=u_{T}^{(r_{1}r_{2})}$;

\item For any $r_{1}^{\prime}\in[t_{1}, +\infty)\cap\mathbb{Q}$ and
any $r_{2}^{\prime}\in[t_{2}, +\infty) \cap \mathbb{Q}$,
$(u_{T}^{(t_{1})})_{T}^{(t_{2})} \geq (u_{T}^{(t_{1})})_{T}^{(r_{2}^{\prime})} \geq
(u_{T}^{(r_{1}^{\prime})})_{T}^{(r_{2}^{\prime})} = u_{T}^{(r_{1}^{\prime}r_{2}^{\prime})}$.
\end{itemize}
Since $u_{T}^{(\_)}$ is continuous (by Corollary~\ref{Power-Continuous-Thm}), we have
$$
\lim_{\substack{\mathbb{Q}\ni r_{1}\nearrow t_{1}\\
\mathbb{Q}\ni r_{2}\nearrow t_{2}}}u_{T}^{(r_{1}r_{2})}
 =u_{T}^{(t_{1}t_{2})} \geq(u_{T}^{(t_{1})})_{T}^{(t_{2})},
$$
and
$$
\lim_{\substack{\mathbb{Q}\ni r_{1}^{\prime}\searrow t_{1}\\
\mathbb{Q}\ni r_{2}^{\prime}\searrow t_{2}}}u_{T}^{(r_{1}^{\prime}r_{2}^{\prime})}
 =u_{T}^{(t_{1}t_{2})} \leq (u_{T}^{(t_{1})})_{T}^{(t_{2})}.
$$
Therefore, $(u_{T}^{(t_{1})})_{T}^{(t_{2})}=u_{T}^{(t_{1}t_{2})}$.
\end{itemize}
\end{itemize}
\end{proof}

\section{New operational laws over IFVs}\label{Sec-IV}

By using the power operation of continuous t-norms formulated in Section~\ref{Sec-III},
the following operational laws are introduced for $\mathrm{IFVs}$,
which generalize Definition~\ref{Def-Int-Operations}.

\begin{definition}\label{def-8.1}
 Let $\alpha =\langle\mu_{\alpha},\nu_{\alpha}\rangle$,
 $\beta =\langle\mu_{\beta},\nu_{\beta}\rangle \in
 \tilde{\mathbb{I}}$, and $T$ be a continuous t-norm.
 For $\lambda>0$, define
\begin{enumerate}[(i)]
\item $\alpha\oplus_{_T} \beta
=\langle S(\mu_{\alpha},\mu_{\beta}),T(\nu_{\alpha},\nu_{\beta})\rangle$;
\item $\alpha\otimes_{_T}\beta
=\langle T(\mu_{\alpha},\mu_{\beta}),S(\nu_{\alpha},\nu_{\beta})\rangle$;
\item $\lambda_{_T}\alpha
=\langle (\mu_{\alpha})_{S}^{(\lambda)},(\nu_{\alpha})_{T}^{(\lambda)}\rangle$;
\item $\alpha^{\lambda_{_T}}
=\langle (\mu_{\alpha})_{T}^{(\lambda)},(\nu_{\alpha})_{S}^{(\lambda)}\rangle$;
\end{enumerate}
where $S$ is the dual t-conorm of $T$ and $u_{T}^{(\_)}$ and $u_{S}^{(\_)}$
are defined in Definition~\ref{x-Power-Def}.
\end{definition}

\begin{remark}
\label{remk5}
(1) The operations $\oplus_{_T}$ and $\otimes_{_T}$ were first introduced by
Deschrijver and Kerre~\cite{DK2002}, which were called
generalized union and generalized intersection, respectively. However, the multiplication
and power operations ((iii) and (iv)) of IFVs were not defined for general
t-norms. To date, all operations on $\mathrm{IFVs}$ are considered only
for some special families of t-norms having AGs,
for example, minimum, algebraic
product, Hamacher t-norms (Einstein product),
Frank t-norms, and strict t-norms, etc.

(2) It can be deduced that,
\begin{itemize}
  \item {if $T$ is taken as the algebraic product $T_{\textbf{P}}$,
  operations in Definition~\ref{def-8.1} reduce to the classical operational
  laws $\oplus$, $\otimes$, $\lambda\alpha$, and $\alpha^{\lambda}$ of
  $\mathrm{IFVs}$ in \cite[Definitions~1.2.2]{XC2012} and \cite[Definitions~3.2]{Xu2007}, i.e.,
  $\oplus =\oplus_{_{T_{\textbf{P}}}}$, $\otimes =\otimes_{_{T_{\textbf{P}}}}$,
  $\lambda\alpha =\lambda_{_{T_{\textbf{P}}}}\alpha$, and $\alpha^{\lambda}
   =\alpha^{\lambda_{_{T_{\textbf{P}}}}}$;}

  \item if $T$ is taken as a strict t-norm, Definition~\ref{def-8.1}
  is equivalent to \cite[Definitions~4]{DGM2017} and \cite[Definitions~5]{XXZ2012}.
  Although a pair of t-conorm $S$ and t-norm $T$ need not be dual in
  \cite[Definitions~4]{DGM2017}, which is not essential, one only needs to
  assume that the inequality $T(u, v) +S(1 -u, 1 -v)
  \leq 1$ holds for all $(u, v) \in [0,1]^2$,
  ensuring the operations $\oplus_{_T}$ and $\otimes_{_T}$ be closed in
  IFVs $\tilde{\mathbb{I}}$.
  \end{itemize}
  Therefore, all \cite[Definitions~1.2.2]{XC2012}, \cite[Definitions~3.2]{Xu2007},
  and \cite[Definitions~4]{DGM2017} are special cases of our Definition~\ref{def-8.1}.

(3) Clearly, $\cap=\otimes_{_{T_{\textbf{M}}}}$ and $\cup=\oplus_{_{T_{\textbf{M}}}}$.

(4) For $\alpha \in\{\langle 0, 1\rangle, \langle 1, 0\rangle, \langle 0, 0\rangle\}$
and $\lambda  \in (0, +\infty)$, it follows from Proposition~\ref{pr-7.3} that
$\lambda_{_T}\alpha=\alpha^{\lambda_{_T}}=\alpha$.
\end{remark}


\begin{theorem}
\label{Closed-Thm}
Let $T$ be a continuous t-norm. Then, for $\alpha$, $\beta \in \tilde{\mathbb{I}}$
and $\lambda>0$, all $\alpha \oplus_{_T}\beta$, $\alpha\otimes_{_T}\beta$,
$\lambda_{_T}\alpha$, and $\alpha^{\lambda_{_T}}$ are IFVs.
\end{theorem}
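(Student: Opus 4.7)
The plan is to verify, for each of the four constructions, that the output pair lies in $[0,1]^2$ and satisfies the IFV constraint $\mu + \nu \leq 1$. Membership in $[0,1]^2$ is automatic in every case: $T$ and $S$ map $[0,1]^2$ into $[0,1]$, and by Definition~\ref{x-Power-Def} the power $(\_)_T^{(\lambda)}$ and $(\_)_S^{(\lambda)}$ are defined as inf/sup over subsets of $[0,1]$. So I would dispatch this observation in one line and focus on the sum constraint.

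For $\alpha \oplus_T \beta = \langle S(\mu_\alpha, \mu_\beta), T(\nu_\alpha, \nu_\beta)\rangle$, the idea is to start from the IFV conditions $\nu_\alpha \leq 1 - \mu_\alpha$ and $\nu_\beta \leq 1 - \mu_\beta$, apply monotonicity of $T$ to conclude $T(\nu_\alpha,\nu_\beta) \leq T(1-\mu_\alpha, 1-\mu_\beta)$, then invoke the duality $T(x,y) = 1 - S(1-x,1-y)$ from formula~\eqref{eq-7.1} to rewrite the right side as $1 - S(\mu_\alpha,\mu_\beta)$. Rearranging gives the desired inequality. The argument for $\alpha \otimes_T \beta$ is perfectly symmetric: apply monotonicity of $S$ to $\mu_\alpha \leq 1-\nu_\alpha$ and $\mu_\beta \leq 1-\nu_\beta$, then use the dual identity $S(x,y) = 1 - T(1-x,1-y)$.

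For the scalar operations, I would handle $\lambda_T\alpha = \langle (\mu_\alpha)_S^{(\lambda)}, (\nu_\alpha)_T^{(\lambda)}\rangle$ as follows. By Theorem~\ref{Power-Dual-Thm-Main}, $(\nu_\alpha)_T^{(\lambda)} = 1 - (1-\nu_\alpha)_S^{(\lambda)}$, so the desired inequality $(\mu_\alpha)_S^{(\lambda)} + (\nu_\alpha)_T^{(\lambda)} \leq 1$ reduces to $(\mu_\alpha)_S^{(\lambda)} \leq (1-\nu_\alpha)_S^{(\lambda)}$. This follows at once from $\mu_\alpha \leq 1 - \nu_\alpha$ and the monotonicity of $(\_)_S^{(\lambda)}$ established in Proposition~\ref{Power-Increasing-Pro-1}. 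The argument for $\alpha^{\lambda_T} = \langle (\mu_\alpha)_T^{(\lambda)}, (\nu_\alpha)_S^{(\lambda)}\rangle$ is the mirror image: apply Theorem~\ref{Power-Dual-Thm-Main} to rewrite $(\mu_\alpha)_T^{(\lambda)} = 1 - (1-\mu_\alpha)_S^{(\lambda)}$, and use monotonicity of $(\_)_S^{(\lambda)}$ together with $\nu_\alpha \leq 1 - \mu_\alpha$.

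I expect no genuine obstacle here; the only thing to be careful about is bookkeeping the duality relations between $T$ and $S$ and invoking the correct monotonicity statement for each case. The proof is essentially a clean assembly of formula~\eqref{eq-7.1}, Theorem~\ref{Power-Dual-Thm-Main}, and Proposition~\ref{Power-Increasing-Pro-1}, with the IFV constraint on $\alpha$ and $\beta$ as the starting inequality in each of the four verifications.
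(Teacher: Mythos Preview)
Your proposal is correct and follows essentially the same approach as the paper. The paper only writes out the $\lambda_{_T}\alpha$ case explicitly (leaving the others as ``analogous''), and there it applies Theorem~\ref{Power-Dual-Thm-Main} to $(\mu_\alpha)_S^{(\lambda)}$ rather than to $(\nu_\alpha)_T^{(\lambda)}$ as you do, but this is just the mirror-image computation using $T$-monotonicity instead of $S$-monotonicity; the ingredients (formula~\eqref{eq-7.1}, Proposition~\ref{Power-Increasing-Pro-1}, Theorem~\ref{Power-Dual-Thm-Main}) and the overall structure are identical.
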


\begin{proof}
We only prove that $\lambda_{_T}\alpha$ is an IFV. The rest can be proved analogously.

For any $\lambda>0$, by Proposition~\ref{Power-Increasing-Pro-1} and
Theorem~\ref{Power-Dual-Thm-Main}, we have $(\mu_{\alpha})_{S}^{(\lambda)}=
1-(1-\mu_{\alpha})_{T}^{(\lambda)}\leq 1-(\nu_{\alpha})_{T}^{(\lambda)}$
since $\mu_{\alpha}+\nu_{\alpha}\leq1$, and thus
$(\mu_{\alpha})_{S}^{(\lambda)}+(\nu_{\alpha})_{T}^{(\lambda)} \leq 1,$
i.e., $\lambda_{_T}\alpha$ is an IFV.
\end{proof}

\begin{theorem}
\label{T-Norm-Operation-Thm}
Let $\gamma_1$, $\gamma_2$, $\gamma_3\in \tilde{\mathbb{I}}$, and $T$ be a
continuous t-norm. Then, for any $\zeta$, $\xi$, $\lambda>0$, we have
\begin{itemize}
 \item [{\rm (i)}] $\gamma_1\oplus_{_T}\gamma_2=\gamma_2\oplus_{_T}\gamma_1$;
 \item [{\rm (ii)}] $\gamma_1\otimes_{_T}\gamma_2=\gamma_2\otimes_{_T}\gamma_1$;
 \item [{\rm (iii)}] $(\gamma_1\oplus_{_T}\gamma_2)\oplus_{_T}\gamma_3
     =\gamma_1\oplus_{_T}(\gamma_2\oplus_{_T}\gamma_3)$;
 \item [{\rm (iv)}] $(\gamma_1\otimes_{_T}\gamma_2)\otimes_{_T}\gamma_3=
     \gamma_1\otimes_{_T}(\gamma_2\otimes_{_T}\gamma_3)$;
 \item [{\rm (v)}] $(\xi_{_{T}} \gamma_1)\oplus_{_T}(\zeta_{_{T}} \gamma_1)=(\xi+\zeta)_{_T}\gamma_1$;
 \item [{\rm (vi)}] $(\gamma_1^{\xi_{_T}})\otimes_{_T}(\gamma_1^{\zeta_{_T}})=\gamma_1^{(\xi+\zeta)_{T}}$.
 \end{itemize}
  Moreover, if $T$ satisfies Theorem \ref{Power-Stab-Char} (iii), then
 \begin{itemize}
 \item [{\rm (vii)}] $\lambda_{_T}(\gamma_1\oplus_{_T}
 \gamma_2)=\lambda_{_T}\gamma_1\oplus_{_T}\lambda_{_T}\gamma_2$;
 \item [{\rm (viii)}] $(\gamma_1\otimes_{_T}\gamma_2)^{\lambda_{_T}}=\gamma_1^{\lambda_{_T}}\otimes_{_T}\gamma_2^{\lambda_{_T}}$;
\item [{\rm (ix)}] $(\gamma_1^{\lambda_T})^{\xi_{T}}=\gamma_1^{(\lambda\cdot\xi)_T} $;
\item [{\rm (x)}] $\xi_{_{T}}(\lambda_{_T}\gamma_1)=(\lambda \cdot\xi)_{_T}\gamma_1$.
\end{itemize}
\end{theorem}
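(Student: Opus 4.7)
The plan is to prove the ten identities coordinate-wise by reducing each to a corresponding algebraic property of the continuous t-norm $T$ and its dual t-conorm $S$ established in Section~\ref{Sec-III}. Recall that for $\alpha=\langle \mu_\alpha,\nu_\alpha\rangle$ we have $\lambda_T\alpha=\langle (\mu_\alpha)_S^{(\lambda)},(\nu_\alpha)_T^{(\lambda)}\rangle$ and $\alpha^{\lambda_T}=\langle (\mu_\alpha)_T^{(\lambda)},(\nu_\alpha)_S^{(\lambda)}\rangle$, while $\oplus_T$ applies $S$ to memberships and $T$ to non-memberships, and $\otimes_T$ swaps these roles. Thus each equality between IFVs decomposes into one identity involving $T$ and one involving $S$, and I will treat these in parallel.

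For (i)--(iv), I would simply unfold both sides and invoke the commutativity axiom (T1) and associativity axiom (T2) of the t-norm $T$, which transfer to its dual $S$ via Proposition~\ref{eq-7.1}. For (v) and (vi), I would unfold
\[
(\xi_T\gamma_1)\oplus_T(\zeta_T\gamma_1)=\bigl\langle S((\mu_{\gamma_1})_S^{(\xi)},(\mu_{\gamma_1})_S^{(\zeta)}),\, T((\nu_{\gamma_1})_T^{(\xi)},(\nu_{\gamma_1})_T^{(\zeta)})\bigr\rangle
\]
and apply Proposition~\ref{Power-Mon-Thm}, which gives $S(u_S^{(\xi)},u_S^{(\zeta)})=u_S^{(\xi+\zeta)}$ and $T(u_T^{(\xi)},u_T^{(\zeta)})=u_T^{(\xi+\zeta)}$; this matches $(\xi+\zeta)_T\gamma_1$ exactly. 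Part (vi) is handled identically after swapping the roles of $T$ and $S$.

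The remaining parts (vii)--(x) require the additional hypothesis that $T$ satisfies Theorem~\ref{Power-Stab-Char}(iii). My first observation would be that, by Theorem~\ref{Power-Dual-Thm-Main}, power stability transfers from $T$ to its dual $S$: for any $u,v\in[0,1]$ and $t>0$,
\[
(S(u,v))_S^{(t)} = 1-(1-S(u,v))_T^{(t)} = 1-(T(1-u,1-v))_T^{(t)} = S(u_S^{(t)},v_S^{(t)}),
\]
using the power-stability identity \eqref{eq-Power-Stab} applied to $T$. Consequently both $T$ and $S$ are power stable. Then (vii) follows from
\[
(S(\mu_{\gamma_1},\mu_{\gamma_2}))_S^{(\lambda)} = S((\mu_{\gamma_1})_S^{(\lambda)},(\mu_{\gamma_2})_S^{(\lambda)})
\]
together with the analogous identity for $T$ on the non-membership coordinate, and (viii) is symmetric. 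For (ix) and (x), Theorem~\ref{Power-Stab-Char}(ii) supplies pointwise power stability: $(u_T^{(t_1)})_T^{(t_2)} = u_T^{(t_1 t_2)}$ and dually $(u_S^{(t_1)})_S^{(t_2)}=u_S^{(t_1 t_2)}$ by Theorem~\ref{Power-Dual-Thm-Main}, which immediately give the two claimed coordinate-wise equalities.

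The main obstacle I expect is organizational rather than conceptual: making sure the dual statement for $S$ genuinely follows from the hypothesis on $T$ (so that all four of commutativity, associativity, additive power law, and power stability are available for both $T$ and $S$ simultaneously), and then bookkeeping the coordinates so that, e.g., in (vii) the membership coordinate is governed by power stability of $S$ while the non-membership coordinate is governed by power stability of $T$. Once this duality bridge is set up at the beginning, each of the ten identities collapses to a one-line verification on each coordinate, so the proof is short modulo the structural results already established.
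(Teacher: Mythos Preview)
Your proposal is correct and follows essentially the same route as the paper: parts (i)--(iv) from commutativity/associativity of $T$ and $S$, parts (v)--(vi) from Proposition~\ref{Power-Mon-Thm}, and parts (vii)--(x) by transferring power stability and pointwise power stability to $S$ via Theorem~\ref{Power-Dual-Thm-Main} and then unfolding coordinates. The only cosmetic difference is that you isolate the ``duality bridge'' $(S(u,v))_S^{(t)}=S(u_S^{(t)},v_S^{(t)})$ once at the outset, whereas the paper derives it inline inside the proof of (vii); the content is the same.
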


\begin{proof}
Assume that the dual t-conorm of $T$ is $S$. The statements (i)--(iv) follow directly from
the commutativity and associativity of $T$ and $S$.

(v) From Definition~\ref{def-8.1}, by direct calculation, we have
\begin{align*}
&(\xi_{_{T}} \gamma_1)\oplus_{_T}(\zeta_{_{T}} \gamma_1)\\
=&\left\langle (\mu_{\gamma_1})_{S}^{(\xi)},(\nu_{\gamma_1})_{T}^{(\xi)}\right\rangle
\oplus_{_T}\left\langle (\mu_{\gamma_1})_{S}^{(\zeta)},(\nu_{\gamma_1})_{T}^{(\zeta)}\right\rangle\\
=&\left\langle S((\mu_{\gamma_1})_{S}^{(\xi)},(\mu_{\gamma_1})_{S}^{(\zeta)}),
T((\nu_{\gamma_1})_{T}^{(\xi)},(\nu_{\gamma_1})_{T}^{(\zeta)})\right\rangle\\
=&\left\langle(\mu_{\gamma_1})_{S}^{(\xi+\zeta)},(\nu_{\gamma_1})_{T}^{(\xi+\zeta)}\right\rangle
\quad \text{(by Proposition~\ref{Power-Mon-Thm})}\\
=&(\xi +\zeta)_{_T}\gamma_1.
\end{align*}

(vi) From Definition~\ref{def-8.1}, by direct calculation, we have
\begin{align*}
&(\gamma_1^{\xi_{_T}})\otimes_{_T}(\gamma_1^{\zeta_{_T}})\\
=&\left\langle(\mu_{\gamma_1})_{_T}^{(\xi)},(\nu_{\gamma_1})_{_S}^{(\xi)}\right\rangle \otimes_{_T}\left\langle(\mu_{\gamma_1})_{_T}^{(\zeta)},(\nu_{\gamma_1})_{_S}^{(\zeta)}\right\rangle \\
=&\left\langle T((\mu_{\gamma_1})_{_T}^{(\xi)},(\mu_{\gamma_1})_{_T}^{(\zeta)}),
S((\nu_{\gamma_1})_{_S}^{(\xi)},(\nu_{\gamma_1})_{_S}^{(\zeta)})\right\rangle\\
=&\left((\mu_{\gamma_1})_{_T}^{(\xi+\zeta)},(\nu_{\gamma_1})_{_S}^{(\xi+\zeta)}\right)
\quad \text{(by Proposition~\ref{Power-Mon-Thm})}\\
=&\gamma_1^{(\xi+\zeta)_{_T}}.
\end{align*}

(vii) From Definition~\ref{def-8.1}, by direct calculation, we have
\begin{equation}
\begin{split}
\lambda_{_T}(\gamma_1\oplus_{_T}\gamma_2)&=\lambda_{_T}
\left\langle S(\mu_{\gamma_1}, \mu_{\gamma_2}), T(\nu_{\gamma_1},\nu_{\gamma_2})\right\rangle\\
&=\left\langle (S(\mu_{\gamma_1},\mu_{\gamma_2}))_{S}^{(\lambda)},
(T(\nu_{\gamma_1}, \nu_{\gamma_2}))_{T}^{(\lambda)})\right\rangle,
\end{split}
\end{equation}
and
\begin{equation}
(\lambda_{_T}\gamma_1)\oplus_{_T}(\lambda_{_T}\gamma_2)\\
=\left\langle S((\mu_{\gamma_1})_{_S}^{(\lambda)},(\mu_{\gamma_2})_{_S}^{(\lambda)}),
T((\nu_{\gamma_1})_{_T}^{(\lambda)},(\nu_{\gamma_1})_{_T}^{(\lambda)})\right\rangle.
\end{equation}
Since $T$ satisfies Theorem~\ref{Power-Stab-Char} (ii),
by Theorems~\ref{Power-Dual-Thm-Main} and \ref{Power-Stab-Char},
we obtain
$$
(T(\nu_{\gamma_1}, \nu_{\gamma_2}))_{_T}^{(\lambda)} =
T((\nu_{\gamma_1})_{_T}^{(\lambda)},(\nu_{\gamma_2})_{_T}^{(\lambda)}),
$$
and
\begin{align*}
(S(\mu_{\gamma_1}, \mu_{\gamma_2}))_{S}^{(\lambda)}
& =1 -(1 -S(\mu_{\gamma_1}, \mu_{\gamma_2}))_{T}^{(\lambda)}\\
& =1 -(T(1 -\mu_{\gamma_1}, 1 -\mu_{\gamma_2}))_{T}^{(\lambda)}\\
& =1 -T((1 -\mu_{\gamma_1})_{T}^{(\lambda)}, (1 -\mu_{\gamma_2})_{T}^{(\lambda)})\\
& =1 -T(1 -(\mu_{\gamma_1})_{S}^{(\lambda)}, 1 -(\mu_{\gamma_2})_{S}^{(\lambda)})\\
& =S((\mu_{\gamma_1})_{S}^{(\lambda)}, (\mu_{\gamma_2})_{S}^{(\lambda)}).
\end{align*}
Therefore,
$\lambda_{_T}(\gamma_1\oplus_{_T} \gamma_2)=(\lambda_{_T}\gamma_1)
\oplus_{_T}(\lambda_{_T}\gamma_2).$

(viii) Similarly to the proof of (vii), it can be verified that $(\gamma_1\otimes_{_T}\gamma_2)^{\lambda_{_T}}=\gamma_1^{\lambda_{_T}}\otimes_{_T}\gamma_2^{\lambda_{_T}}$.

(ix) and (x) From Definition~\ref{def-8.1}, since $T$ satisfies
Theorem~\ref{Power-Stab-Char} (ii), by Theorems~\ref{Power-Dual-Thm-Main}
and \ref{Power-Stab-Char}, we have
\begin{align*}
(\gamma_1^{\lambda_{_T}})^{\xi_{_T}}
&=((\mu_{\gamma_1})_{_T}^{(\lambda)}, (\nu_{\lambda})_{_S}^{(\lambda)})^{\xi_{_T}}\\
&=(((\mu_{\gamma_1})_{_T}^{(\lambda)})_{_T}^{(\xi)},((\nu_{\gamma_1})_{_S}^{(\lambda)})_{_S}^{(\xi)})\\
&=((\mu_{\gamma_1})_{_T}^{(\lambda\cdot\xi)}, (1-(1-\nu_{\gamma_1})_{_T}^{(\lambda)})_{_S}^{(\xi)})\\
&=((\mu_{\gamma_1})_{_T}^{(\lambda\cdot\xi)},1- ((1-\nu_{\gamma_1})_{_T}^{(\lambda)})_{_T}^{(\xi)})\\
&=((\mu_{\gamma_1})_{_T}^{(\lambda\cdot\xi)},1-(1-\nu_{\gamma_1})_{_T}^{(\lambda\cdot\xi)})\\
&=((\mu_{\gamma_1})_{_T}^{(\lambda\cdot\xi)}, (\nu_{\gamma_1})_{_S}^{(\lambda\cdot\xi)})\\
&=\gamma_1^{(\lambda\cdot\xi)_{_T}},
\end{align*}
and
\begin{align*}
\xi_{_T}(\lambda_{_T}\gamma_1)
&=\xi_{_T}((\mu_{\gamma_1})_{_S}^{(\lambda)},(\nu_{\lambda})_{_T}^{(\lambda)})\\
&=(((\mu_{\gamma_1})_{_S}^{(\lambda)})_{_S}^{(\xi)},((\nu_{\gamma_1})_{_T}^{(\lambda)})_{_T}^{(\xi)})\\
&=((1-(1-\mu_{\gamma_1})_{_T}^{(\lambda)})_{_S}^{(\xi)},(\nu_{\gamma_1})_{_T}^{(\lambda\cdot\xi)})\\
&=(1-((1-\mu_{\gamma_1})_{_T}^{(\lambda)})_{_T}^{(\xi)} ,(\nu_{\gamma_1})_{_T}^{(\lambda\cdot\xi)})\\
&=(1-(1-\mu_{\gamma_1})_{_T}^{(\lambda\cdot\xi)} ,(\nu_{\gamma_1})_{_T}^{(\lambda\cdot\xi)})\\
&=((\mu_{\gamma_1})_{_S}^{(\lambda\cdot\xi)}, (\nu_{\gamma_1})_{_T}^{(\lambda\cdot\xi)})\\
&=(\xi\cdot\lambda)_{_T}\gamma_1.
\end{align*}
\end{proof}

{\begin{remark}
\label{re-10}
  (1)  If  $T=T_{\textbf{P}}$ in Theorems~\ref{Closed-Thm} and \ref{T-Norm-Operation-Thm},
  then Theorem~\ref{Closed-Thm} and Theorem~\ref{T-Norm-Operation-Thm} (i)--(iii) and (v)--(ix) reduce to
  \cite[Theorem~1.2.2]{XC2012} and \cite[Theorem~1.2.3]{XC2012}
  (also see \cite[Theorems~3.2 and 3.3]{Xu2007}), respectively.

  (2) If $T$ in Theorems~\ref{Closed-Thm} and \ref{T-Norm-Operation-Thm}
   is a strict t-norm, then Theorem~\ref{Closed-Thm} and Theorem~\ref{T-Norm-Operation-Thm} (i),
   (ii), and (v)--(viii) reduce to \cite[Theorem 1]{XXZ2012}. It should be noted that,
   by Theorem~\ref{Power-Stab-Char}, \cite[Theorem~1 (3) and (4)]{XXZ2012} hold
   if and only if $T$ is strict, under the assumption of $T\in \mathscr{T}_{_{\mathrm{ConA}}}$ in \cite{XXZ2012}.

  (3) If $T$ in Theorem \ref{T-Norm-Operation-Thm} is the
  Acz\'{e}l-Alsina t-norms, i.e., $T(u, v)=T_{\lambda}^{\textbf{AA}}(u, v)
      = e^{-\left[(-\ln u)^{\lambda}+(-\ln v)^{\lambda}\right]^{\frac{1}{\lambda}}}$
      ($\lambda\in (0, +\infty)$), then Theorem~\ref{T-Norm-Operation-Thm} (i),
   (ii), and (v)--(viii) reduce to \cite[Theorem~1]{SCY2022}.

 (4) By Theorem~\ref{T-Norm-Operation-Thm} (v) and (vi),
 for any $\alpha\in\tilde{\mathbb{I}}$ and any $n\in \mathbb{N}$, it holds that
 $n_{_T}\cdot \alpha=\underbrace{\alpha\oplus_{_T}\cdots\oplus_{_T}\alpha}_{n}$
and
 $\alpha^{n_{_T}}=\underbrace{\alpha\otimes_{_T}\cdots\otimes_{_T}\alpha}_{n}.$

  (5) By Theorem~\ref{Power-Stab-Char}, one of   Theorem~\ref{T-Norm-Operation-Thm}
  (vii)--(x) holds if and only if $T$ satisfies Theorem~\ref{Power-Stab-Char} (iii),
  i.e., $T=T_{\textbf{M}}$, or $T$ is strict, or $T$ is an ordinal sum of strict t-norms.
\end{remark}
}

\section{IF aggregation operators}\label{Sec-V}

\subsection{IF weighted average (geometric) operator}

\begin{definition}\label{de-9.1}
Let $\bm{\Omega}=(\omega_1,\omega_2,\ldots,\omega_n)^{\top}$ be
the weight vector such that $\omega_\ell\in (0, 1]$ and
$\sum_{\ell=1}^{n}\omega_{\ell}=1$ and $T$ be a continuous t-norm.
Define the \textit{IF weighted average operator}
$\mathrm{IFWA}_{T, \bm{\Omega}}$ and \textit{IF weighted geometric operator}
$\mathrm{IFWG}_{T, \bm{\Omega}}$ induced by $T$ as
\begin{equation}\label{eq-9.1}
\begin{split}
\mathrm{IFWA}_{T, \bm{\Omega}}:\tilde{\mathbb{I}}^n & \rightarrow \tilde{\mathbb{I}}\\
(\gamma_1,\ldots,\gamma_n) & \mapsto ((\omega_1)_{_{T}}\gamma_1) \oplus_{_{T}}
\cdots  \oplus_{_{T}}((\omega_n)_{_{T}}\gamma_n),
\end{split}
\end{equation}
and
\begin{equation}\label{eq-9.2}
\begin{split}
\mathrm{IFWG}_{T, \bm{\Omega}}:\tilde{\mathbb{I}}^n & \rightarrow \tilde{\mathbb{I}}\\
(\gamma_1,\ldots,\gamma_n) & \mapsto (\gamma_1^{(\omega_1)_{_{T}}}) \otimes_{_{T}}
\cdots  \otimes_{_{T}}(\gamma_n^{(\omega_n)_{_{T}}}),
\end{split}
\end{equation}
respectively.
\end{definition}

\begin{remark}
\label{Remark-7}
{(i) By Theorem \ref{Closed-Thm}, it follows that $(\omega_1)_{_{T}}\gamma_1\oplus_{_{T}}
    \cdots \oplus_{_{T}}(\omega_n)_{_{T}}\gamma_n\in\tilde{\mathbb{I}}$
and $\gamma_1^{(\omega_1)_{_{T}}}\otimes_{_{T}}
\cdots \otimes_{_{T}}\gamma_n^{(\omega_n)_{_{T}}}\in\tilde{\mathbb{I}}$,
provided that $T$ is continuous and $\gamma_i\in\tilde{\mathbb{I}}$.
Thus, Definition~\ref{de-9.1} is well defined.}

(ii) By Theorem~\ref{T-Norm-Operation-Thm} (vii) and (viii), if $T$
satisfies Theorem~\ref{Power-Stab-Char} (iii) and $\bm{\Omega}=(\frac{1}{n}, \frac{1}{n},
\ldots, \frac{1}{n})^{\top}$,
then
\begin{equation}
\begin{split}
&\quad \mathrm{IFWA}_{T, \bm{\Omega}}(\gamma_1,\gamma_2,\ldots,\gamma_n)\\
&=\left(\frac{1}{n}\right)_{_{T}}(\gamma_1 \oplus_{_{T}}\gamma_2
 \oplus_{_{T}} \cdots  \oplus_{_{T}}\gamma_n),
\end{split}
\end{equation}
and
\begin{equation}
\begin{split}
&\quad \mathrm{IFWG}_{T, \bm{\Omega}}(\gamma_1, \gamma_2, \ldots, \gamma_n)\\
&=(\gamma_1  \otimes_{_{T}} \gamma_2  \otimes_{_{T}}
\cdots  \otimes_{_{T}}\gamma_n)^{(\frac{1}{n})_{_{T}}}.
\end{split}
\end{equation}
\end{remark}

\begin{theorem}
\label{IFWA-IFWG-Formula}
Let $T$ be a continuous t-norm and $\bm{\Omega}=(\omega_1,\omega_2,\ldots,\omega_n)^{\top}$
be the weight vector with $\omega_j\in (0, 1]$ and $\sum_{j=1}^{n}\omega_{j}=1$.
Then, for $\{\gamma_j=\langle \mu_{\gamma_j}, \nu_{\gamma_j}\rangle\}_{j=1}^{n}
\subseteq \tilde{\mathbb{I}}$,
\begin{equation}
\begin{split}
&\quad \mathrm{IFWA}_{T, \bm{\Omega}}(\gamma_1, \gamma_2, \ldots, \gamma_n)\\
& =\left\langle S^{(n)}\left((\mu_{\gamma_1})_S^{(\omega_1)},(\mu_{\gamma_2})_S^{(\omega_2)},
\ldots, (\mu_{\gamma_n})_S^{(\omega_n)}\right),\right.\\
&~~~~ \left.T^{(n)}\left((\nu_{\gamma_1})_T^{(\omega_1)},(\nu_{\gamma_2})_T^{(\omega_2)},
\ldots, (\nu_{\gamma_n})_T^{(\omega_n)}\right)\right\rangle,
\end{split}
\end{equation}
and
\begin{equation}
\begin{split}
&\quad \mathrm{IFWG}_{T, \bm{\Omega}}(\gamma_1, \gamma_2, \ldots,\gamma_n)\\
& =\left\langle T^{(n)}\left((\mu_{\gamma_1})_T^{(\omega_1)},(\mu_{\gamma_2})_T^{(\omega_2)},
\ldots, (\mu_{\gamma_n})_T^{(\omega_n)}\right),\right.\\
&~~~~ \left. S^{(n)}\left((\nu_{\gamma_1})_S^{(\omega_1)},(\nu_{\gamma_2})_S^{(\omega_2)},
\ldots, (\nu_{\gamma_n})_S^{(\omega_n)}\right)\right\rangle,
\end{split}
\end{equation}
where $S$ is the dual t-conorm of $T$.
\end{theorem}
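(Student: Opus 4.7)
The plan is to prove both formulas by a straightforward induction on $n$, unpacking the defining expression of $\mathrm{IFWA}_{T,\bm{\Omega}}$ (resp.\ $\mathrm{IFWG}_{T,\bm{\Omega}}$) one summand at a time and invoking associativity. More precisely, I would first apply Definition~\ref{def-8.1}(iii) to rewrite each scalar multiple as
\[
(\omega_{\ell})_{_{T}}\gamma_{\ell}
=\bigl\langle(\mu_{\gamma_{\ell}})_{S}^{(\omega_{\ell})},(\nu_{\gamma_{\ell}})_{T}^{(\omega_{\ell})}\bigr\rangle,
\]
so that the whole expression in \eqref{eq-9.1} becomes an iterated $\oplus_{_{T}}$ of IFVs whose components are explicit powers.

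Next, I would apply Definition~\ref{def-8.1}(i), which turns $\oplus_{_{T}}$ into the pair $\langle S(\cdot,\cdot),T(\cdot,\cdot)\rangle$ on the membership/non-membership components. For $n=2$ the claim is then immediate. For the inductive step, assuming the formula at stage $n-1$, I would write
\[
\mathrm{IFWA}_{T,\bm{\Omega}}(\gamma_{1},\ldots,\gamma_{n})
=\bigl(\mathrm{IFWA}_{T,\bm{\Omega}'}(\gamma_{1},\ldots,\gamma_{n-1})\bigr)\oplus_{_{T}}\bigl((\omega_{n})_{_{T}}\gamma_{n}\bigr)
\]
using associativity of $\oplus_{_{T}}$ from Theorem~\ref{T-Norm-Operation-Thm}(iii); then plug in the inductive hypothesis and use the recursive definition $S^{(n)}(x_{1},\ldots,x_{n})=S(S^{(n-1)}(x_{1},\ldots,x_{n-1}),x_{n})$ (and the analogous one for $T^{(n)}$) from the $m$-ary extension recalled in Section~\ref{S-II}. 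This collapses the two-piece expression into the claimed $S^{(n)}$ and $T^{(n)}$ forms.

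The argument for $\mathrm{IFWG}_{T,\bm{\Omega}}$ is completely parallel: Definition~\ref{def-8.1}(iv) gives
\[
\gamma_{\ell}^{(\omega_{\ell})_{_{T}}}
=\bigl\langle(\mu_{\gamma_{\ell}})_{T}^{(\omega_{\ell})},(\nu_{\gamma_{\ell}})_{S}^{(\omega_{\ell})}\bigr\rangle,
\]
Definition~\ref{def-8.1}(ii) converts $\otimes_{_{T}}$ into $\langle T(\cdot,\cdot),S(\cdot,\cdot)\rangle$, and the associativity of $\otimes_{_{T}}$ from Theorem~\ref{T-Norm-Operation-Thm}(iv) drives the induction. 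Well-definedness (each intermediate object remains in $\tilde{\mathbb{I}}$) is guaranteed by Theorem~\ref{Closed-Thm}, as already noted in Remark~\ref{Remark-7}(i), so no extra feasibility checking is needed.

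There is no genuine obstacle here: the only subtlety is bookkeeping, i.e.\ being careful that the roles of $T$ and its dual $S$ are correctly assigned to the membership and non-membership coordinates at every step, and that the $m$-ary extensions $S^{(n)}$ and $T^{(n)}$ are used with their standard left-associative unfolding. Notably, no power-stability hypothesis (Theorem~\ref{Power-Stab-Char}) is needed for this theorem, since the powers $(\mu_{\gamma_{\ell}})_{S}^{(\omega_{\ell})}$ and $(\nu_{\gamma_{\ell}})_{T}^{(\omega_{\ell})}$ appear already in the outermost layer and are never composed with another power.
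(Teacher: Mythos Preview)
Your proposal is correct and follows essentially the same approach as the paper: both arguments amount to applying Definition~\ref{def-8.1}(iii)--(iv) to expand each $(\omega_\ell)_{_T}\gamma_\ell$ (resp.\ $\gamma_\ell^{(\omega_\ell)_{_T}}$) componentwise, and then repeatedly applying Definition~\ref{def-8.1}(i)--(ii) to collapse the iterated $\oplus_{_T}$ (resp.\ $\otimes_{_T}$) into the $n$-ary $S^{(n)}$ and $T^{(n)}$. The only cosmetic difference is that the paper writes the computation out in one step rather than framing it as an explicit induction on $n$, but the underlying mechanism is identical.
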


\begin{proof}
By formulas~\eqref{eq-9.1} and \eqref{eq-9.2},
and Definition~\ref{def-8.1}, we have
{\begin{align*}
&\mathrm{IFWA}_{T, \bm{\Omega}}(\gamma_1,\gamma_2, \ldots,\gamma_n)\\
 =&((\mu_{\gamma_1})_S^{(\omega_1)},(\nu_{\gamma_1})_T^{(\omega_1)})
\oplus_{_{T}}((\mu_{\gamma_2})_S^{(\omega_2)},(\nu_{\gamma_2})_T^{(\omega_2)})\\
&\quad \oplus_{_T}
\cdots \oplus_{_{T}}((\mu_{\gamma_n})_S^{(\omega_n)},(\nu_{\gamma_n})_{_{T}}^{(\omega_n)})\\
 =&\left\langle S^{(n)}\left((\mu_{\gamma_1})_S^{(\omega_1)}, (\mu_{\gamma_2})_S^{(\omega_2)},
\ldots, (\mu_{\gamma_n})_S^{(\omega_n)}\right),\right.\\
&\quad \left.
T^{(n)}\left((\nu_{\gamma_1})_T^{(\omega_1)}, (\nu_{\gamma_2})_T^{(\omega_2)},
\ldots, (\nu_{\gamma_n})_T^{(\omega_n)}\right)\right\rangle,
\end{align*}
and
\begin{align*}
&\mathrm{IFWG}_{T, \bm{\Omega}}(\gamma_1, \gamma_2, \ldots,\gamma_n)\\
 =& ((\mu_{\gamma_1})_S^{(\omega_1)}, (\nu_{\gamma_1})_T^{(\omega_1)})
\otimes_{_{T}}((\mu_{\gamma_2})_S^{(\omega_2)},
(\nu_{\gamma_2})_T^{(\omega_2)})\\
&\quad \otimes_{_{T}}\cdots \otimes_{_{T}}((\mu_{\gamma_n})_S^{(\omega_n)},
(\nu_{\gamma_n})_{_{T}}^{(\omega_n)})\\
 =&\left\langle T^{(n)}\left((\mu_{\gamma_1})_T^{(\omega_1)}, (\mu_{\gamma_2})_T^{(\omega_2)},
\ldots, (\mu_{\gamma_n})_T^{(\omega_n)}\right),\right.\\
&\quad \left.
S^{(n)}\left((\nu_{\gamma_1})_S^{(\omega_1)}, (\nu_{\gamma_2})_S^{(\omega_2)},
\ldots, (\nu_{\gamma_n})_S^{(\omega_n)}\right)\right\rangle.
\end{align*}}
\end{proof}

In the following, consider $\mathrm{IFWA}_{T, \bm{\Omega}}$
and $\mathrm{IFWG}_{T, \bm{\Omega}}$ for some special forms of $T$:

\medskip

 (1) If $T=T_{\textbf{P}}$, by Theorem~\ref{IFWA-IFWG-Formula} and Remark~\ref{re-4},
 then $\mathrm{IFWA}_{T_{\textbf{P}}, \bm{\Omega}}=\mathrm{IFWA}_{\bm{\Omega}}$ and
      $\mathrm{IFWG}_{T_{\textbf{P}}, \bm{\Omega}}=\mathrm{IFWG}_{\bm{\Omega}}$,
      where $\mathrm{IFWA}_{\bm{\Omega}}$ and $\mathrm{IFWG}_{\bm{\Omega}}$ are defined
      in~\cite[Definition~1.3.1]{XC2012} and \cite[Definition~1.3.2]{XC2012}
      (also see~\cite[Definitions~3.3 and 3.4]{Xu2007}, respectively. Thus,
\cite[Theorems~1.3.1 and 1.3.2]{XC2012} and \cite[Theorems~3.4 and 3.6]{Xu2007}
are direct corollaries of Theorem~\ref{IFWA-IFWG-Formula}.

\medskip

  (2) If $T =T_{2}^{\textbf{H}}$ (Einstein product), by Theorem~\ref{IFWA-IFWG-Formula}
  and Example~\ref{T-Norm-Operation-Thm}, then $\mathrm{IFWA}_{T_{2}^{\textbf{H}}, \bm{\Omega}}
  = \mathrm{IFWA}_{\bm{\Omega}}^{\varepsilon}$, where $\mathrm{IFWA}_{\bm{\Omega}}^{\varepsilon}$ is
  defined in~\cite[Definition 14.1]{WL2012}. Thus, \cite[Theorems 4.1]{WL2012} is a
  direct corollary of Theorem~\ref{IFWA-IFWG-Formula}.

\medskip

  (3) If $T$ is a strict t-norm with an AG $G$,
  by Proposition~\ref{Power-t-Thm} and Theorem~\ref{IFWA-IFWG-Formula}, then
    $\mathrm{IFWA}_{T, \bm{\Omega}}(\gamma_{1}, \ldots, \gamma_{n})
  =\Big\langle 1 -G^{ -1}\Big(\sum_{\ell=1}^{n}\omega_{\ell}G(1 -\mu_{\gamma_{\ell}})\Big),
  G^{ -1}\Big(\sum_{\ell=1}^{n}\omega_{\ell}G(\nu_{\gamma_{\ell}})\Big)\Big\rangle,$
  and
  $\mathrm{IFWG}_{T, \bm{\Omega}}(\gamma_{1}, \ldots, \gamma_{n})
  =\Big\langle G^{ -1}\Big(\sum_{\ell=1}^{n}\omega_{\ell}G(\mu_{\gamma_{\ell}})\Big),$
  $G^{ -1}\Big(1 -\sum_{\ell=1}^{n}\omega_{\ell}G(1 -\nu_{\gamma_{\ell}})\Big)\Big\rangle$,
  which are \cite[Theorems~3 and 4]{XXZ2012}.

\begin{example}
(1) The family $(T_{\lambda}^{\textbf{H}})_{\lambda\in[0, +\infty]}$
of \textit{Hamacher t-norms} is
$$
T_{\lambda}^{\textbf{H}}(u, v)
=\begin{cases}
T_{\textbf{D}}(u, v), & \lambda=+\infty, \\
0,  & \lambda=u=v=0, \\
\frac{uv}{\lambda +(1 -\lambda)(u +v -uv)}, & \text{otherwise}.
\end{cases}
$$

(2) The family $(S_{\lambda}^{\textbf{H}})_{\lambda\in[0, +\infty]}$
of \textit{Hamacher t-conorms} is
$$
S_{\lambda}^{\textbf{H}}(u, v)
=\begin{cases}
S_{\textbf{D}}(u, v),  & \lambda=+\infty, \\
1,  & \lambda=0 \text{ and } u=v=1, \\
\frac{u +v -uv -(1 -\lambda)
uv}{1 -(1 -\lambda)(u +v -uv)}, & \text{otherwise}.
\end{cases}
$$
The t-norm $T_{0}^{\textbf{H}}$ and  the t-conorm $S_{2}^{\textbf{H}}$,
which are given by
$$
T_{0}^{\textbf{H}}(u, v)
=\begin{cases}
0, & u=v=0, \\
\frac{uv}{u +v -uv}, & \text{otherwise},
\end{cases}
$$
and
$$
T_{2}^{\textbf{H}}(u, v)
=\frac{uv}{1 +(1 -u)(1 -v)},
$$
respectively, are sometimes called the
\textit{Hamacher product} and the
\textit{Einstein product}, respectively.

{Clearly, $(T_{\lambda}^{\textbf{H}})_{\lambda\in[0, +\infty]}$ are strict.
For each $\lambda\in [0, +\infty]$, $T_{\lambda}^{\textbf{H}}$ and
$S_{\lambda}^{\textbf{H}}$ are dual to each other. The AGs $G_{\lambda}
^{\textbf{H}}$, $S_{\lambda}^{\textbf{H}}: [0, 1]\rightarrow [0, +\infty]$
of $T_{\lambda}^{\textbf{H}}$ and $S_{\lambda}^{\textbf{H}}$ are given by, respectively,
$$
G_{\lambda}^{\textbf{H}}(u)
=\begin{cases}
\frac{1 -u}{u}, & \lambda=0, \\
\ln\left(\frac{\lambda +(1 -\lambda) u}{u}\right), & \lambda \in (0, +\infty),
\end{cases}
$$
and
$$
S_{\lambda}^{\textbf{H}}(u)
=\begin{cases}
\frac{u}{1 -u}, & \lambda=0, \\
\ln\left(\frac{\lambda +(1 -\lambda)(1 -u)}{1 -u}\right), & \lambda \in (0, +\infty).
\end{cases}
$$}

Let $\gamma_{1} =\langle 0.1, 0.7\rangle$, $\gamma_{2} =\langle 0.4, 0.3\rangle$,
$\gamma_{3} =\langle 0.6, 0.1\rangle$, and $\gamma_{4} =\langle 0.2, 0.5\rangle
 \in \tilde{\mathbb{I}}$,
and $\bm{\Omega} =(0.2, 0.3, 0.1, 0.4)^{\top}$ be the weighted vector
of $\gamma_{1}$--$\gamma_4$. According to~\cite[Example 4.1]{WL2012}, we have
$\mathrm{IFWA}_{T_{\textbf{P}}, \bm{\Omega}}
(\gamma_{1}, \gamma_{2}, \gamma_{3}, \gamma_{4}) =\langle 0.2990, 0.3906\rangle$
and
$\mathrm{IFWA}_{T_{2}^{\textbf{H}}, \bm{\Omega}}
(\gamma_{1}, \gamma_{2}, \gamma_{3}, \gamma_{4}) =\langle 0.2891, 0.4026\rangle.$

Case 1. Consider the t-norm $T$ defined by
\begin{equation}
\label{eq-Exm-4-1}
\quad T(u, v)=
\begin{cases}
\frac{1}{2}T_{\textbf{P}}(2u, 2v), & {(u, v)\in[0, \frac{1}{2}]^{2}}, \\
\frac{1}{2} +\frac{1}{2}T_{2}^{\textbf{H}}(2u -1, 2v -1), & {(u, v)
\in [\frac{1}{2}, 1]^{2}}, \\
\min\{u, v\}, & \text{otherwise},
\end{cases}
\end{equation}
i.e., $T=(\langle 0, 0.5, T_{\textbf{P}}\rangle,
\langle 0.5, 1, T_{2}^{\textbf{H}}\rangle)$.
By direct calculation and Theorem~\ref{Power-Formula-Xinxing}, we have
\begin{equation}
\label{eq-Exm-4-1-a}
u_{T}^{(t)}=
\begin{cases}
2^{t -1} \cdot u^{t}, & u\in[0, \frac{1}{2}] \text{ and } t >0,\\
\frac{1}{2} +\frac{1}{\left(\frac{3 -2u}{2u -1}\right)^{t} +1}, & u \in [\frac{1}{2}, 1] \text{ and } t >0.
\end{cases}
\end{equation}
Then, by Theorem~\ref{Power-Dual-Thm-Main}, we have
\begin{align*}
 \mathrm{IFWA}_{T, \bm{\Omega}}(\gamma_{1}, \gamma_{2}, \gamma_{3}, \gamma_{4})
 =& \left\langle 1 -\frac{0.4^{0.1}}{2^{0.9}}, \frac{0.1^{0.1} \cdot 0.3^{0.3}}{2^{0.6}}\right\rangle\\
 =& \langle0.5110, 0.3652\rangle.
 \end{align*}

Case 2. Consider the t-norm $\hat{T}$ defined by
\begin{equation}
\label{eq-Exm-4-2}
\hat{T}(u, v)=
\begin{cases}
\frac{1}{2}T_{2}^{\textbf{H}}(2u, 2v), & (u, v) \in
[0, \frac{1}{2}]^{2}, \\
\frac{1}{2} +\frac{1}{2}T_{\textbf{P}}(2u -1, 2v -1), & (u, v)
\in [\frac{1}{2}, 1]^{2}, \\
\min\{u, v\}, & \text{otherwise},
\end{cases}
\end{equation}
i.e., $\hat{T} =(\langle 0, 0.5, T_{2}^{\textbf{H}}\rangle,
\langle 0.5, 1, T_{\textbf{P}}\rangle)$.
By direct calculation and Theorem~\ref{Power-Formula-Xinxing},
we have
\begin{equation}
\label{eq-Exm-4-2-a}
u_{\hat{T}}^{(t)}=
\begin{cases}
\frac{1}{(\frac{1 -u}{u})^{t} +1}, & u\in [0, \frac{1}{2}] \text{ and } t >0, \\
\frac{1}{2} +\frac{1}{2}(2u -1)^{t}, & u\in [\frac{1}{2}, 1] \text{ and } t >0.
\end{cases}
\end{equation}
Then, by Theorem~\ref{Power-Dual-Thm-Main},  we have
 \begin{align*}
 &\mathrm{IFWA}_{\hat{T}, \bm{\Omega}}(\gamma_{1}, \gamma_{2}, \gamma_{3}, \gamma_{4})\\
 =&\left\langle 1 -\frac{1}{1.5^{0.1} +1},\frac{2 \cdot\frac{1}{(\frac{7}{3})^{1.3} +1}
  \cdot\frac{1}{9^{0.1} +1}}{1 +\left(1 -\frac{2}{(\frac{7}{3})^{0.3} +1}\right)
 \left(1 -\frac{2}{1 -9^{0.1} +1}\right)}\right\rangle\\
 =& \langle 0.5101, 0.3837\rangle.
 \end{align*}
\end{example}

Combining together Proposition~\ref{Power-Increasing-Pro-1},
Theorem~\ref{T-Norm-Operation-Thm} (v), and Theorem~\ref{IFWA-IFWG-Formula},
we immediately obtains the following result.

\begin{proposition}
\label{IFWA-IFWG-Pro}
Let $T$ be a continuous t-norm and $\bm{\Omega} =(\omega_1,\omega_2,\ldots,\omega_n)^{\top}$
be the weight vector with $\omega_j\in (0, 1]$ and $\sum_{j=1}^{n}\omega_{j}=1$.
Then, for $\{\gamma_{j}=\langle \mu_{\gamma_{j}}, \nu_{\gamma_{j}}\rangle\}_{j=1}^{n}
\subseteq \tilde{\mathbb{I}}$, we have
\begin{enumerate}[{\rm (1)}]
  \item {\rm (Idempotency):} If $\gamma_{1} =\gamma_{2} =\cdots =\gamma_{n} =\gamma$,
 then
$$
\mathrm{IFWA}_{T, \bm{\Omega}}(\gamma_{1}, \ldots, \gamma_{n})
 =\mathrm{IFWG}_{T, \bm{\Omega}}(\gamma_{1}, \ldots, \gamma_{n})
 =\gamma.
$$
  \item {\rm (Boundedness):}
$$
\gamma^{ -} \leq
\mathrm{IFWA}_{T, \bm{\Omega}}(\gamma_{1}, \ldots, \gamma_{n})
\leq \gamma^{ +},
$$
 and
$$
\gamma^{ -} \leq
\mathrm{IFWG}_{T, \bm{\Omega}}(\gamma_{1}, \ldots, \gamma_{n})
\leq \gamma^{ +},
$$
where $\gamma^{ -} =\left\langle \min_{1\leq j\leq n}\{\mu_{\gamma_{j}}\},
\max_{1\leq j\leq n}\{\nu_{\gamma_{j}}\}\right\rangle$ and
$\gamma^{ +} =\left\langle \max_{1\leq j\leq n}\{\mu_{\gamma_{j}}\},
\min_{1\leq j\leq n}\{\nu_{\gamma_{j}}\}\right\rangle$.
  \item {\rm (Monotonicity):} Let $\{\beta_{j} =\langle \mu_{\beta_{j}}, \nu_{\beta_{j}}\rangle\}_{j=1}^{n}
\subseteq \tilde{\mathbb{I}}$ such that $\mu_{\beta_{j}}\geq\mu_{\gamma_{j}}$ and
$\nu_{\beta_{j}}\leq \nu_{\gamma_{j}}$.
 Then
$$
\mathrm{IFWA}_{T, \bm{\Omega}}(\gamma_{1}, \ldots, \gamma_{n})
\leq_{_{\mathrm{Xu}}} \mathrm{IFWA}_{T, \bm{\Omega}}(\beta_{1}, \ldots, \beta_{n}),
$$
and
$$
\mathrm{IFWG}_{T, \bm{\Omega}}(\gamma_{1}, \ldots, \gamma_{n})
\leq_{_{\mathrm{Xu}}} \mathrm{IFWG}_{T, \bm{\Omega}}(\beta_{1}, \ldots, \beta_{n}).
$$
\end{enumerate}
\end{proposition}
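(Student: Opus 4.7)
The plan is to prove all three properties for $\mathrm{IFWA}_{T,\bm{\Omega}}$ using the explicit formula from Theorem~\ref{IFWA-IFWG-Formula}, and then note that the argument for $\mathrm{IFWG}_{T,\bm{\Omega}}$ is completely analogous by swapping the roles of $T$ and $S$ (and of $\mu$ and $\nu$). Throughout, $S$ denotes the dual t-conorm of $T$, and for any IFV tuple I abbreviate the result of $\mathrm{IFWA}_{T,\bm{\Omega}}$ as $\langle M,N\rangle$ with $M=S^{(n)}((\mu_{\gamma_1})_S^{(\omega_1)},\ldots,(\mu_{\gamma_n})_S^{(\omega_n)})$ and $N=T^{(n)}((\nu_{\gamma_1})_T^{(\omega_1)},\ldots,(\nu_{\gamma_n})_T^{(\omega_n)})$.

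For idempotency, if every $\gamma_{\ell}=\gamma$, iterating Theorem~\ref{T-Norm-Operation-Thm}~(v) (and its dual form, which follows from Proposition~\ref{Power-Mon-Thm} applied to $S$) gives $(\omega_1)_T\gamma\oplus_T\cdots\oplus_T(\omega_n)_T\gamma=(\omega_1+\cdots+\omega_n)_T\gamma=1_T\gamma$. Since $u_T^{(1)}=u$ and $u_S^{(1)}=u$, one has $1_T\gamma=\gamma$, proving idempotency for $\mathrm{IFWA}$; the IFWG case uses Theorem~\ref{T-Norm-Operation-Thm}~(vi) in place of (v).

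For boundedness, first observe that $\gamma^-$ and $\gamma^+$ are genuine IFVs: picking an index $j_0$ attaining $\nu^+:=\max_j\nu_{\gamma_j}$ gives $\mu^-:=\min_j\mu_{\gamma_j}\le\mu_{\gamma_{j_0}}\le1-\nu^+$, and symmetrically for $\gamma^+$. Next, applying Proposition~\ref{Power-Increasing-Pro-1} pointwise and monotonicity of $S$ and $T$ yields
\begin{equation*}
M\ge S^{(n)}((\mu^-)_S^{(\omega_1)},\ldots,(\mu^-)_S^{(\omega_n)})=(\mu^-)_S^{(\omega_1+\cdots+\omega_n)}=(\mu^-)_S^{(1)}=\mu^-,
\end{equation*}
where the second equality uses Proposition~\ref{Power-Mon-Thm}; the symmetric bound $N\le\nu^+$ follows identically. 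Hence $S(\langle M,N\rangle)=M-N\ge\mu^--\nu^+=S(\gamma^-)$. If the inequality is strict, Definition~\ref{de-order(Xu)} yields $\gamma^-\le_{\mathrm{Xu}}\mathrm{IFWA}$; if it is an equality, the componentwise inequalities $M\ge\mu^-$ and $N\le\nu^+$ force $M=\mu^-$ and $N=\nu^+$, so in fact $\mathrm{IFWA}=\gamma^-$. The upper bound $\mathrm{IFWA}\le_{\mathrm{Xu}}\gamma^+$ is handled identically, and IFWG is dual.

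For monotonicity, let $\langle M_\beta,N_\beta\rangle$ and $\langle M_\gamma,N_\gamma\rangle$ denote the two IFWA outputs. Proposition~\ref{Power-Increasing-Pro-1} gives $(\mu_{\beta_j})_S^{(\omega_j)}\ge(\mu_{\gamma_j})_S^{(\omega_j)}$ and $(\nu_{\beta_j})_T^{(\omega_j)}\le(\nu_{\gamma_j})_T^{(\omega_j)}$ for each $j$, and monotonicity of $S$ and $T$ propagates these to $M_\beta\ge M_\gamma$ and $N_\beta\le N_\gamma$. Consequently $S(\mathrm{IFWA}(\beta))\ge S(\mathrm{IFWA}(\gamma))$, and if equality holds the componentwise bounds again force $M_\beta=M_\gamma$ and $N_\beta=N_\gamma$, i.e.\ the two outputs coincide. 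Thus $\mathrm{IFWA}(\gamma)\le_{\mathrm{Xu}}\mathrm{IFWA}(\beta)$; the IFWG case is the dual argument. The only subtle point — and the one I expect to require the most care in writing — is correctly handling the tie-breaking in Xu's total order (score first, then accuracy), but as shown above the componentwise inequalities collapse equality of scores to identity of IFVs, so the accuracy branch of Definition~\ref{de-order(Xu)} is never actually needed.
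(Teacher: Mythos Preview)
Your proposal is correct and follows essentially the same approach the paper indicates: the paper's proof is a one-line reference to Proposition~\ref{Power-Increasing-Pro-1}, Theorem~\ref{T-Norm-Operation-Thm}~(v), and Theorem~\ref{IFWA-IFWG-Formula}, and you have simply unpacked those ingredients carefully (adding Proposition~\ref{Power-Mon-Thm} for the collapse $S^{(n)}((\mu^-)_S^{(\omega_1)},\ldots)=(\mu^-)_S^{(1)}$, which is implicit in the paper's citation of Theorem~\ref{T-Norm-Operation-Thm}~(v)). Your explicit treatment of the tie case in Xu's order, showing that equality of scores together with the componentwise bounds forces identity of the IFVs, is a detail the paper leaves unstated but which is exactly right.
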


\begin{remark}
If $T$ is taken as a strict t-norm, then Proposition~\ref{IFWA-IFWG-Pro} reduces
to \cite[Properties~1--3]{XXZ2012}. Meanwhile, \cite[Proposition~4.1]{WL2012},
which contains a long proof in~\cite{WL2012}, is a direct corollary of
Proposition~\ref{IFWA-IFWG-Pro} if $T=T_{2}^{\textbf{H}}$.
\end{remark}

{\section{Applications}
\label{Sec-VI}
\subsection{IF mean weighted average
and geometric (IFMWAG) operator}
In~\cite{HHC2015,HH2016}, it was pointed out that the operational laws in
Definition~\ref{Def-Int-Operations} have the following disadvantage:
if the rating of an alternative on some attribute is $\langle 1, 0\rangle$,
regardless of the ratings of this alternative on other attribute,
the overall rating of this alternative always is $\langle 1, 0\rangle$
(see \cite[Example~1]{HHC2015}). Clearly, this is impractical. To overcome
this disadvantage, some new interactional {operations for IFVs} were introduced
(see \cite[Definition~6]{HHC2015} and \cite[Definition~5]{HCZLT2014}).
However, those operational laws have another disadvantage: if the rating of an
alternative on some attribute is $\langle 0, 1\rangle$, regardless of the ratings
of this alternative on other attribute, the overall rating of this alternative
always is $\langle 0, 1\rangle$ (see Example~\ref{Exm-5-Wu}). This is also impractical.
To overcome these two disadvantages, we adopt the following aggregation operator for
MADM problems under IF environment.
\begin{definition}
\label{Def-IFMWAG-Ope}
For a continuous t-norm $T$ and weight vector $\bm{\Omega}=(\omega_1,\omega_2,\ldots,\omega_n)^{\top}$
with $\omega_\ell\in (0, 1]$ and $\sum_{\ell=1}^{n}\omega_{\ell}=1$, define the
\textit{intuitionistic fuzzy mean weighted average and geometric operator}
$\mathrm{IFMWAG}_{T, \bm{\Omega}}$ induced by $T$ as
\begin{equation}\label{eq-10.1}
\mathrm{IFMWAG}_{T, \bm{\Omega}}
(\gamma_1, \ldots, \gamma_n) =
\left\langle \frac{\mu_{_A} +\mu_{_G}}{2}, \frac{\nu_{_A} +\nu_{_G}}{2} \right\rangle,
\end{equation}
where $\langle \mu_{_A}, \nu_{_A}\rangle =\mathrm{IFWA}_{T, \bm{\Omega}}(\gamma_1, \ldots, \gamma_n)$
and $\langle \mu_{_G}, \nu_{_G}\rangle =\mathrm{IFWG}_{T, \bm{\Omega}}(\gamma_1, \ldots, \gamma_n)$.
\end{definition}

\begin{remark}
(1) Because $\langle \mu_{_A}, \nu_{_A}\rangle$ and $\langle \mu_{_G}, \nu_{_G}\rangle$
are IFVs (by Remark~\ref{Remark-7}), $\mathrm{IFMWAG}_{T, \bm{\Omega}}
(\gamma_1, \ldots, \gamma_n)$ is an IFV. Thus, the aggregation operator
$\mathrm{IFMWAG}_{T, \bm{\Omega}}$ in Definition~\ref{Def-IFMWAG-Ope} is closed.

(2) By Definition~\ref{de-9.1}, it can be verified that (a)
$\frac{\mu_{_A} +\mu_{_G}}{2} =0$ if and only if $\mu_{\gamma_1} =
\cdots  =\mu_{\gamma_n} =0$; (b) $\frac{\mu_{_A} +\mu_{_G}}{2} =1$
if and only if $\mu_{\gamma_1} =\cdots  =\mu_{\gamma_n} =1$.
This overcomes the two weaknesses mentioned above.

(3) By Proposition~\ref{IFWA-IFWG-Pro}, it can be obtained that
the operator $\mathrm{IFMWAG}_{T, \bm{\Omega}}$ is idempotent,
bounded, and monotonous.

(4) If $T=T_{\textbf{P}}$, by direct calculation, we have
\begin{align*}
&\mathrm{IFMWAG}_{T_{\textbf{P}}, \bm{\Omega}}
(\gamma_1, \ldots, \gamma_n)\\
=&
\left\langle \frac{1 -\prod_{\ell=1}^{n}(1 -\mu_{\gamma_\ell})^{\omega_\ell}
 +\prod_{\ell=1}^{n}(\mu_{\gamma_\ell})^{\omega_\ell}}{2},\right.\\
& ~~\left. \frac{1 -\prod_{\ell=1}^{n}(1 -\nu_{\gamma_\ell})^{\omega_\ell}
 +\prod_{\ell=1}^{n}(\nu_{\gamma_\ell})^{\omega_\ell}}{2} \right\rangle.
\end{align*}
\end{remark}

{\begin{example}
\label{Exm-5-Wu}
Let $\gamma_1 =\langle \mu_{\gamma_1}, \nu_{\gamma_1}\rangle$, $\gamma_2 =\langle 0, 1\rangle$,
and $\bm{\Omega} =(\omega_1, \omega_2)^{\top}$ be the weight vector with $0<\omega_1$, $\omega_2
\leq 1$ and $\omega_1+\omega_2=1$.

Regardless of the values of $\gamma_1$ and $\bm{\Omega}$, (1) by applying the IF aggregation operators
in \cite[Definition 3.3]{Xu2007}, \cite[Eq.~(7)]{Xu2011}, \cite[Definition 1.3.1]{XC2012}, \cite[Definition~6]{XXZ2012},
\cite[Definition~4.1]{WL2012}, \cite[Definition~10]{Huang2014}, \cite[Definition~7]{Liu2014},
\cite[Definition~4.1]{CC2016},
\cite[Definitions~6 and 7]{ZX2017}, \cite[Definition~3.2]{Garg2017},
\cite[Definition~15]{SCY2022}, \cite[Definitions~4.1 and 4.14]{FR2023},
and \cite[Definition~16]{SSSDRT2023} for
$\gamma_1^{\complement}$ and $\gamma_2^{\complement}$, we obtain
$\langle 1, 0\rangle$; (2) by applying the IF aggregation operators in
\cite[Definition~5]{XY2006}, \cite[Definition~5]{XY2011},
\cite[Definition~8]{HCZLT2014}, \cite[Definition~8]{HHC2015},
\cite[Definition~3.2]{Garg2016}, \cite[Definition~10]{LC2017},
\cite[Definitions~8 and 9]{ZX2017}, \cite[Definition~3]{Ye2017}, and \cite[Definitions~12--14]{SCMY2023} for
$\gamma_1$ and $\gamma_2$, we obtain $\langle 0, 1 \rangle.$ Clearly, all of these are unreasonable.

Taking $T$ as the t-norm defined by Eq.~\eqref{eq-Exm-4-1}, by
direct calculation, it follows from Eq.~\eqref{eq-10.1} that
\begin{equation*}
\mathrm{IFMWAG}_{T, \bm{\Omega}}(\gamma_1, \gamma_2) =
\left\langle \frac{1 -(1 -\mu_{\gamma_1})_{T}^{(\omega_1)}}{2},
\frac{1 +(\nu_{\gamma_1})_{T}^{(\omega_1)}}{2} \right\rangle,
\end{equation*}
where $(\_)_{T}^{(\omega_1)}$ is given by Eq.~\eqref{eq-Exm-4-1-a}.
It can be verified that $\mathrm{IFMWAG}_{T, \bm{\Omega}}(\gamma_1, \gamma_2)
=\langle 0, 1\rangle$ if and only if $\gamma_1=\langle 0, 1\rangle$, demonstrating
that our proposed operator $\mathrm{IFMWAG}_{T, \bm{\Omega}}$ is more reasonable.
\end{example}}

\subsection{A new MADM method via the proposed IFMWAG operator}

The MADM is useful to identify the best alternative among the available resources. In this subsection,
we establish a decision-making algorithm via the proposed IFMWAG operator induced by some continuous
t-norm. To do so, consider ``$p$" different alternatives denoted by $\mathfrak{A}_1$, $\mathfrak{A}_2$,
$\ldots, \mathfrak{A}_p$, which are assessed by an expert using ``$q$" different attributes
$\mathscr{O}_1$, $\mathscr{O}_2$, $\ldots, \mathscr{O}_q$. To deal with the uncertainties in the data, the expert
will be asked to utilize the pairs of IFVs $\gamma_{\ell j}=\langle \mu_{\ell j}, \nu_{\ell j}\rangle$
for $1\leq \ell\leq p$ and $1\leq j\leq q$ to express their ratings towards each alternative. Here,
$\mu_{\ell j}$ and $\nu_{\ell j}$ represent ``agreeness" and ``disagreeness" ratings of the expert
for $\mathfrak{A}_\ell$ under $\mathscr{O}_j$. Assume that $\bm{\Omega}=(\omega_1, \omega_2, \ldots, \omega_q)^{\top}$
is the weight vector for the given attributes and $\Delta$ is the partial information about their information,
if not prior known. This MADM process aims to determine the best alternative(s). The following steps are proposed
based on the stated operators to develop the decision-making algorithm.

{\bf Step~1:}
(Derive the decision matrix)
An expert assesses each alternative $\mathfrak{A}_\ell$ ($\ell=1, 2, \ldots, p$) on each attribute $\mathscr{O}_j$
($j=1, 2, \ldots, q$) with IFVs $\gamma_{\ell j}=\langle \mu_{\ell j},
\nu_{\ell j}\rangle$ features, construct an IF decision matrix {$R=(\gamma_{\ell j})_{p\times q}$}.

{\bf Step~2:}
(Normalize the IF decision matrix) Transform $R=(\gamma_{\ell j})_{p\times q}$ into the normalized
IF decision matrix $\overline{R}=(\bar{\gamma}_{\ell j})_{p\times q}=(\langle \bar{\mu}_{\ell j},
\bar{\nu}_{\ell j}\rangle)_{p\times q}$ as follows:
$$
\bar{\gamma}_{\ell j}=
\begin{cases}
\gamma_{\ell j}, & \mathscr{O}_{j} \text{ is a benefit type attribute}, \\
\gamma_{\ell j}^{\complement}, & \mathscr{O}_{j} \text{ is a cost type attribute},
\end{cases}
$$
where $\gamma_{\ell j}^{\complement}=\langle {\nu}_{\ell j}, {\mu}_{\ell j}\rangle$.

{\bf Step~3:} (Determine the optimal weight) Obtain an optimization model
for weight determination, based on $L$-values, as follows:
\begin{align*}
 \max~ & \sum_{j=1}^{q}\sum_{\ell=1}^{p}\omega_{j} \cdot L(\bar{\gamma}_{\ell j}), \\
\text{s.t.~} & \omega_{j}\in\Delta; ~\sum_{j=1}^{q}\omega_{j}=1;
~\omega_{j}\geq 0;
\end{align*}
where $\triangle$ is the partial information about the {attributes weights}
and $L(\bar{\gamma}_{\ell j})
=\frac{1-\bar{\nu}_{\ell j}}{(1-\bar{\mu}_{\ell j})+(1-\bar{\nu}_{\ell j})}$
is the $L$-value of each IFV $\bar{\gamma}_{\ell j}$.

{\bf Step~4:} (Aggregate the IF information)
Aggregate the information for each alternative $\mathfrak{A}_\ell$ ($1\leq \ell\leq p$)
by using the IFMWAG operator $\mathrm{IFMWAG}_{T, \bm{\Omega}}$ for
some fixed {power stable t-norm $T$ as follows:}
$\gamma_{\ell}=\mathrm{IFMWAG}_{T, \bm{\Omega}}
(\bar{\gamma}_{\ell 1}, \bar{\gamma}_{\ell 2}, \ldots, \bar{\gamma}_{\ell q}).$

{{\bf Step~5:} (Arrange the alternatives)
Compute the $L$-value of each $\gamma_{\ell}$ by
$L(\gamma_{\ell})=\frac{1-\nu_{\gamma_{\ell}}}{(1-\mu_{\gamma_{\ell}})+(1-\nu_{\gamma_{\ell}})}$
and rank the alternatives $\mathfrak{A}_1$--$\mathfrak{A}_{p}$ according to the
nonincreasing order of the $L$-values $L(\gamma_{\ell})$ ($1\leq \ell \leq p$).}

{\begin{remark}
Let $\tilde{\mathbb{Q}}$ be the set of all q-rung orthopair fuzzy numbers, i.e.,
$\tilde{\mathbb{Q}}=\{\langle \mu, \nu\rangle\in [0, 1]^2 \mid \mu^q+\nu^q\leq 1\}$.
Noting that the spaces $\tilde{\mathbb{Q}}$ and $\tilde{\mathbb{I}}$ are isomorphic
via the transformation
\begin{align*}
\Gamma: \tilde{\mathbb{Q}} & \longrightarrow \tilde{\mathbb{I}}\\
\langle \mu, \nu \rangle & \longmapsto \langle \mu^{q}, \nu^{q} \rangle
\end{align*}
it can be verified that all results in Sections~\ref{Sec-IV}--\ref{Sec-VI} can be extended
to q-ROFSs according to the following formula:
$$
\Gamma^{-1}\circ \psi (\Gamma(\gamma_1), \ldots, \Gamma(\gamma_n)),
$$
where $\psi$ is an aggregation operator on $\tilde{\mathbb{I}}^{n}$ and $(\gamma_1, \ldots, \gamma_n)
\in \tilde{\mathbb{Q}}^{n}$.
\end{remark}
}

\subsection{Numerical examples}

The developed method has been exemplified with a real-world specimen described as follows:

\begin{example}
\label{Appl-Exm}
In collaboration with the North-Eastern (NE) council, the Department of Science, Government of India, established
the NESAC (North-Eastern Space Applications Center). By the aid of technologically advanced methods and tools
like remote sensing, NESAC explores innovative approaches for utilizing the natural resources. NE states now have greater
access to satellite services, and this space promotes research in these fields. Additionally, Unmanned Aeria Vehicle
(UAV) remote sensing is a significant progress of NESAC's capabilities for monitoring various activities and
mapping at a large scale. UAV, also called a drone, is a flying robot. It is a mechanical aircraft that can be controlled remotely by humans or autonomously by airborne computers. The NESAC's central mission is to assemble ten new UAVs in the NE territory. In conjunction with this plan, NESAC reviews data analysis and processing software for UAV video and image analysis. NESAC confers an IT company that delivers information on five software models for building the needed software denoted: {$\mathbb{S}_{1}$--$\mathbb{S}_5$}. NESAC appoints
an expert to assess $\mathbb{S}_{1}$--$\mathbb{S}_5$ based on the following four attributes,
$\mathscr{O}_1$: ``Image processing capability"; {$\mathscr{O}_2$: ``Measurement error of measurement tools
for distance/co-ordinate/volume/area";} $\mathscr{O}_3$: ``Generation of contour lines using DSM/DEM"; $\mathscr{O}_4$:
``Generation of 3D modelling/texturing capabilities". The evolutions in the software version will simulate these attributes.
The NESAC plans to select the best software(s). It may be more appropriate to use the IFS-based MADM approach to express how {stakeholders} make decisions about which software is best for the region because subjective perceptions towards each assessed {attribute} fundamentally drive the installation of the respective software.
\end{example}

The following are the steps of the proposed MADM algorithm.

\medskip

{\bf Step~1:}
The IF decision matrix related to the given software with respect to
four attributes is summarized in Table~\ref{tab-1}.
\begin{table}[H]
\caption{IF decision matrix in Example~\ref{Appl-Exm}}
\label{tab-1}\centering%
\scalebox{1.0}{
\begin{tabular}{lccccc}
\toprule
~   & $\mathscr{O}_1$  & $\mathscr{O}_2$ & $\mathscr{O}_3$  & $\mathscr{O}_4$   \\
\midrule
$\mathbb{S}_1$  & $\langle 0.6, 0.1 \rangle$ & $\langle 0.1, 0.8\rangle$ & $\langle 0.6, 0.2 \rangle$ & $\langle 0.8, 0.1\rangle$    \\
$\mathbb{S}_2$  & $\langle 0.7, 0.3 \rangle$ & $\langle 0.2, 0.4\rangle$ & $\langle 0.7, 0.2 \rangle$ & $\langle 0.4, 0.3 \rangle$   \\
$\mathbb{S}_3$  & $\langle 0.6, 0.2 \rangle$ & $\langle 0.3, 0.6\rangle$ & $\langle 0.5, 0.3 \rangle$ & $\langle 0.7, 0.1 \rangle$   \\
$\mathbb{S}_4$  & $\langle 0.2, 0.5 \rangle$ & $\langle 0.3, 0.7\rangle$ & $\langle 0.6, 0.1 \rangle$ & $\langle 0.6, 0.3 \rangle$   \\
$\mathbb{S}_5$  & $\langle 0.5, 0.4 \rangle$ & $\langle 0.6, 0.3\rangle$ & $\langle 0.7, 0.1 \rangle$ & $\langle 0.6, 0.3 \rangle$   \\
\bottomrule
\end{tabular}%
}
\end{table}

{{\bf Step~2:}
Because the attributes $\mathscr{O}_1$, $\mathscr{O}_3$, and $\mathscr{O}_4$ are the benefit type attributes,
and because the attribute $\mathscr{O}_2$ is the cost type attribute, the normalized IF decision matrix is shown
in Table~\ref{tab-1-Normal}.
\begin{table}[H]
\caption{Normalized IF decision matrix in Example~\ref{Appl-Exm}}
\label{tab-1-Normal}\centering%
\scalebox{1.0}{
\begin{tabular}{lccccc}
\toprule
~   & $\mathscr{O}_1$  & $\mathscr{O}_2$ & $\mathscr{O}_3$  & $\mathscr{O}_4$   \\
\midrule
$\mathbb{S}_1$  & $\langle 0.6, 0.1 \rangle$ & $\langle 0.8, 0.1\rangle$ & $\langle 0.6, 0.2 \rangle$ & $\langle 0.8, 0.1\rangle$    \\
$\mathbb{S}_2$  & $\langle 0.7, 0.3 \rangle$ & $\langle 0.4, 0.2 \rangle$ & $\langle 0.7, 0.2 \rangle$ & $\langle 0.4, 0.3 \rangle$   \\
$\mathbb{S}_3$  & $\langle 0.6, 0.2 \rangle$ & $\langle 0.6, 0.3 \rangle$ & $\langle 0.5, 0.3 \rangle$ & $\langle 0.7, 0.1 \rangle$   \\
$\mathbb{S}_4$  & $\langle 0.2, 0.5 \rangle$ & $\langle 0.7, 0.3 \rangle$ & $\langle 0.6, 0.1 \rangle$ & $\langle 0.6, 0.3 \rangle$   \\
$\mathbb{S}_5$  & $\langle 0.5, 0.4 \rangle$ & $\langle 0.3, 0.6 \rangle$ & $\langle 0.7, 0.1 \rangle$ & $\langle 0.6, 0.3 \rangle$   \\
\bottomrule
\end{tabular}%
}
\end{table}
}

{{\bf Step~3:}
 The $L$-value matrix for each alternative is given as
 \begin{equation*}
 \mathbb{L}=
 \begin{gathered}
\begin{pmatrix}
0.6923   &0.8182  &0.6667  &0.8182 \\
0.7000   &0.5714  &0.7273  &0.5385 \\
0.6667   &0.6364  &0.5833  &0.7500 \\
0.3846   &0.7000  &0.6923  &0.6364 \\
0.5455   &0.3636  &0.7500  &0.6364 \\
\end{pmatrix}
\end{gathered}.
\end{equation*}
}

By taking partial information about the weight information as $\Delta=\{0.25\leq \omega_1 \leq 0.4;~
0.1\leq \omega_2 \leq 0.5;~ 0.2\leq \omega_3 \leq 0.5;~ 0.2\leq \omega_4\leq 0.45;~ \omega_1+\omega_2\geq \omega_4;
~\omega_1\leq \omega_2\}$, construct an optimization model as
\begin{align*}
  \max~ & 2.9891 \cdot \omega_1 +3.0896 \cdot \omega_2 +3.4196 \cdot \omega_3 +3.3795 \cdot \omega_4, \\
\text{s.t.~} & \omega_j  \in \Delta; ~\sum_{j=1}^{4}\omega_j =1;
~\omega_j \geq 0.
\end{align*}
After solving the model, we get
$\bm{\Omega}=(0.25, 0.25, 0.3, 0.2)^{\top}.$

{\bf Step~4:}
Utilize the weight and information mentioned in Table~\ref{tab-1-Normal},
with different t-norms, we obtain the aggregated and ranking
results as shown in Table~\ref{tab-Agg-Ranking}, implying that
the ranking of $\mathbb{S}_1$--$\mathbb{S}_5$ always is: $\mathbb{S}_1\succ \mathbb{S}_3
\succ \mathbb{S}_2\succ \mathbb{S}_4\succ \mathbb{S}_5$.
\begin{table}[H]	
	\centering
	\caption{Aggregated and ranking results in Example~\ref{Appl-Exm}}
	\label{tab-Agg-Ranking}
\scalebox{0.72}{
\begin{threeparttable}
\begin{tabular}{cccccccc}
\toprule
 T-norm $T$  & $\mathbb{S}_1$  & $\mathbb{S}_2$  & $\mathbb{S}_3$  & $\mathbb{S}_4$  & $\mathbb{S}_5$ & Ranking  \\
\midrule
\multirow{2}*{$(\langle 0, 0.5, T_{\textbf{P}}\rangle,
\langle 0.5, 1, T_{2}^{\textbf{H}}\rangle)$}
& $\langle 0.6883, 0.1272\rangle$ & $\langle 0.5373, 0.2442\rangle$ &
$\langle 0.5481, 0.2297\rangle$ & $\langle 0.5020, 0.3726\rangle$ &
$\langle 0.5149, 0.3953\rangle$ &
\multirow{2}*{$\mathbb{S}_1\succ \mathbb{S}_3\succ \mathbb{S}_2\succ \mathbb{S}_4\succ \mathbb{S}_5$}  \\
\cline{2-6}
~ & $0.7368$ & $0.6203$ & $0.6303$ & $0.5575$ & $0.5549$ & ~ \\
\midrule
\multirow{2}*{$(\langle 0, 0.5, T_{2}^{\textbf{H}}\rangle,
\langle 0.5, 1, T_{\textbf{P}}\rangle)$}
& $\langle 0.6819, 0.1286\rangle$ & $\langle 0.5345, 0.2458\rangle$ &
$\langle 0.5460, 0.2340\rangle$ & $\langle 0.5082, 0.3805\rangle$ &
$\langle 0.5151, 0.4041\rangle$ &
\multirow{2}*{$\mathbb{S}_1\succ \mathbb{S}_3\succ \mathbb{S}_2\succ \mathbb{S}_4\succ \mathbb{S}_5$}  \\
\cline{2-6}
~ & $0.7326$ & $0.6183$ & $0.6279$ & $0.5574$ & $0.5514$ & ~ \\
\midrule
\multirow{2}*{Algebraic product $T_{\textbf{P}}$}
& $\langle 0.6951, 0.1272\rangle$ & $\langle 0.5672, 0.2433\rangle$ &
$\langle 0.5910, 0.2283\rangle$ & $\langle 0.5156, 0.2756\rangle$ &
$\langle 0.5293, 0.3221\rangle$ &
\multirow{2}*{$\mathbb{S}_1\succ \mathbb{S}_3\succ \mathbb{S}_2\succ \mathbb{S}_4\succ \mathbb{S}_5$}  \\
\cline{2-6}
~ & $0.7411$ & $0.6361$ & $0.6536$ & $0.5993$ & $0.5902$ & ~ \\
\midrule
\multirow{2}*{Einstein product $T_{2}^{\textbf{H}}$}
& $\langle 0.6954, 0.1269\rangle$ & $\langle 0.5677, 0.2432\rangle$ &
$\langle 0.5911, 0.2281\rangle$ & $\langle 0.5181, 0.2745\rangle$ &
$\langle 0.5299, 0.3208\rangle$ &
\multirow{2}*{$\mathbb{S}_1\succ \mathbb{S}_3\succ \mathbb{S}_2\succ \mathbb{S}_4\succ \mathbb{S}_5$}  \\
\cline{2-6}
~ & $0.7414$ & $0.6365 $ & $0.6537$ & $0.6009$ & $0.5910$ & ~ \\
		\bottomrule
	\end{tabular}
\end{threeparttable}
}
\end{table}


{
Considering the t-norm $T =(\langle 0, \lambda, T_{\textbf{P}}\rangle,\langle \lambda, 1, T_{2}^{\textbf{H}}\rangle)$ or
$T =(\langle 0, \lambda, T_{2}^{\textbf{H}}\rangle,\langle \lambda, 1, T_{\textbf{P}}\rangle)$, i.e., $T$ is the ordinal
sum of two summands $\langle 0, \lambda, T_{\textbf{P}}\rangle$ and $\langle \lambda, 1,
T_{2}^{\textbf{H}}\rangle$, to show the detailed influence of the parameter $\lambda$
on the ranking orders in Example~\ref{Appl-Exm}, the $L$-values $L(\gamma_\ell)$
of each alternative $\mathbb{S}_{\ell}$ obtained by the proposed MADM method are shown in
Figs.~\ref{Fig-Appl-Exm} (a) and (b), respectively. Observing from Figs.~\ref{Fig-Appl-Exm} (a) and (b),
we see that, when the parameter $\lambda$ varies from $0$ to $1$,
\begin{enumerate}[(1)]
  \item The ranking order of $\mathbb{S}_1$--$\mathbb{S}_3$
   remains unchanged, always $\mathbb{S}_1\succ \mathbb{S}_3\succ \mathbb{S}_2$, and they are all superior to
   $\mathbb{S}_4$ and $\mathbb{S}_5$;
  \item The ranking order of $\mathbb{S}_4$ and $\mathbb{S}_5$ has changed, except for $\lambda\in [0.1, 0.3]$,
  $\mathbb{S}_4$ is always better than $\mathbb{S}_5$, so overall, $\mathbb{S}_4$ is superior to $\mathbb{S}_5$.
\end{enumerate}
 {This indicates that although our approach is relatively stable,
 the parameters can also affect the ranking results.
 Therefore, it is worth investigating the selection of an appropriate t-norm for specific problems.}}
\begin{figure}[H]
\centering
\subfigure[$L$-values aggregated by IFMWAG
for $T=(\langle 0, \lambda, T_{\textbf{P}}\rangle,
\langle \lambda, 1, T_{2}^{\textbf{H}}\rangle)$]
{\scalebox{0.45}{\begin{overpic}{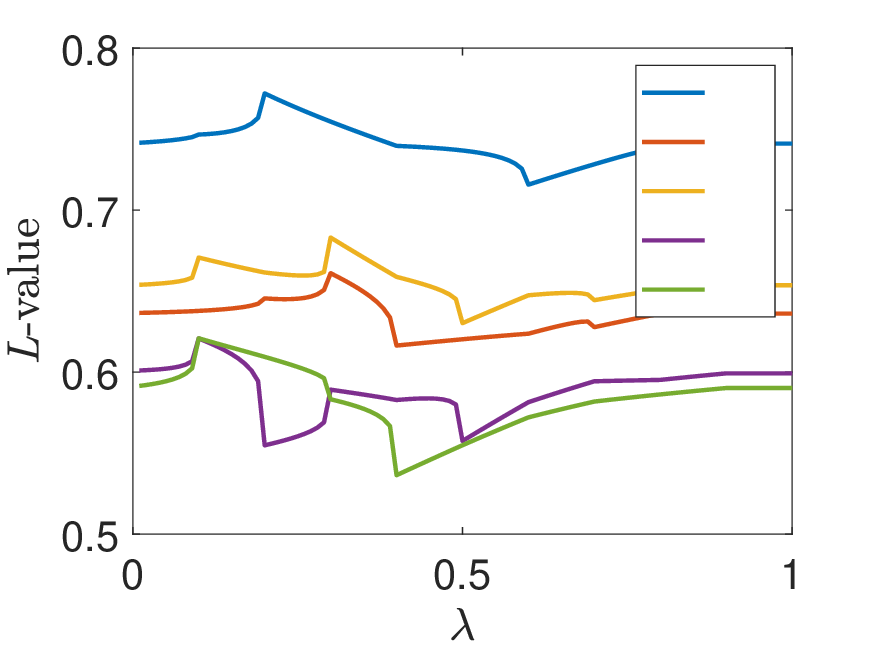}
\put(83,63){\LARGE{$\mathbb{S}_1$}}
\put(83,57.5){\LARGE{$\mathbb{S}_2$}}
\put(83,51.5){\LARGE{$\mathbb{S}_3$}}
\put(83,46){\LARGE{$\mathbb{S}_4$}}
\put(83,40.5){\LARGE{$\mathbb{S}_5$}}
\end{overpic}}}
\subfigure[$L$-values aggregated by IFMWAG
for $T=(\langle 0, \lambda, T_{2}^{\textbf{H}}\rangle,
\langle \lambda, 1, T_{\textbf{P}}\rangle)$]
{\scalebox{0.45}{\begin{overpic}{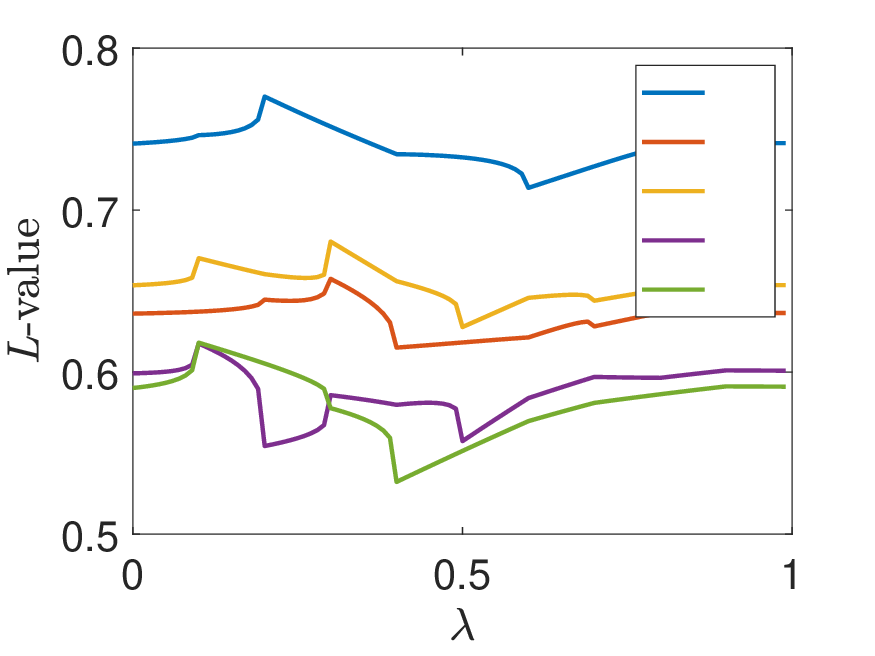}
\put(83,63){\LARGE{$\mathbb{S}_1$}}
\put(83,57.5){\LARGE{$\mathbb{S}_2$}}
\put(83,51.5){\LARGE{$\mathbb{S}_3$}}
\put(83,46){\LARGE{$\mathbb{S}_4$}}
\put(83,40.5){\LARGE{$\mathbb{S}_5$}}
\end{overpic}}}
\caption{$L$-values of $\mathbb{S}_{1}$--$\mathbb{S}_{5}$ in Example~\ref{Appl-Exm}}
\label{Fig-Appl-Exm}
\end{figure}

{
To compare the performances of the ranking order with the existing methods, we carried out comparative
analysis with the MADM approaches in \cite{Xu2007,XY2006,HCZLT2014,Huang2014,CC2016,ZX2017,
Garg2017,Ye2017}. The results obtained by implementing their algorithms (steps are omitted here) are listed
in Table~\ref{Tab-Comparison-Anal}.
\begin{table}[H]
\caption{A comparison of the ranking orders of the alternatives
$\mathbb{S}_1$--$\mathbb{S}_5$ in Example~\ref{Appl-Exm} for different MADM methods}
\label{Tab-Comparison-Anal}\centering%
\scalebox{1.0}{
\begin{tabular}{lcccccccc}
\toprule
Methods & $\mathbb{S}_1$ & $\mathbb{S}_2$ & $\mathbb{S}_3$ & $\mathbb{S}_4$ & $\mathbb{S}_5$ & Ranking\\
\midrule
Xu~\cite{Xu2007}           & 0.5841 & 0.3502 & 0.3786 & 0.3112 & 0.2780 &
$\mathbb{S}_1\succ \mathbb{S}_3\succ \mathbb{S}_2\succ \mathbb{S}_4\succ \mathbb{S}_5$  \\
Xu and Yager~\cite{XY2006}  & 0.5517 & 0.2975 & 0.3469 & 0.1677 & 0.1363 &
$\mathbb{S}_1\succ \mathbb{S}_3\succ \mathbb{S}_2\succ \mathbb{S}_4\succ \mathbb{S}_5$  \\
{Wang and Liu~\cite{WL2012}}  & 0.5517 & 0.2975 & 0.3469 & 0.1677 & 0.1363 &
$\mathbb{S}_1\succ \mathbb{S}_3\succ \mathbb{S}_2\succ \mathbb{S}_4\succ \mathbb{S}_5$  \\
He~et~al.~\cite{HCZLT2014} & 0.5755 & 0.5067 & 0.3540 & 0.3878 & 0.1398 &
$\mathbb{S}_1\succ \mathbb{S}_2\succ \mathbb{S}_3\succ \mathbb{S}_4\succ \mathbb{S}_5$  \\
Huang~\cite{Huang2014}     & 0.5792 & 0.3391 & 0.3734 & 0.2867 & 0.2517 &
$\mathbb{S}_1\succ \mathbb{S}_3\succ \mathbb{S}_2\succ \mathbb{S}_4\succ \mathbb{S}_5$  \\
Chen and Chang~\cite{CC2016} & 0.5764 & 0.1804 & 0.3606 & 0.1147 & 0.2306 &
$\mathbb{S}_1\succ \mathbb{S}_3\succ \mathbb{S}_5\succ \mathbb{S}_2\succ \mathbb{S}_4$  \\
Zhou and Xu~\cite{ZX2017}  & 0.6119 & 0.3502 & 0.4214 & 0.3334 & 0.2997 &
$\mathbb{S}_1\succ \mathbb{S}_3\succ \mathbb{S}_2\succ \mathbb{S}_4\succ \mathbb{S}_5$  \\
Garg~\cite{Garg2017}       & 0.5721 & 0.1669 & 0.3581 & 0.0940 & 0.2183 &
$\mathbb{S}_1\succ \mathbb{S}_3\succ \mathbb{S}_5\succ \mathbb{S}_2\succ \mathbb{S}_4$  \\
Ye~\cite{Ye2017}           & 0.5678 & 0.3234 & 0.3627 & 0.2376 & 0.2050 &
$\mathbb{S}_1\succ \mathbb{S}_3\succ \mathbb{S}_2\succ \mathbb{S}_4\succ \mathbb{S}_5$  \\
{Liu and Chen~\cite{LC2017}}   & 0.5721 & 0.1669 & 0.3581 & 0.0940 & 0.2183 &
$\mathbb{S}_1\succ \mathbb{S}_3\succ \mathbb{S}_5\succ \mathbb{S}_2\succ \mathbb{S}_4$  \\
{Farid and Riaz~\cite{FR2023}}   & 0.5721 & 0.1669 & 0.3581 & 0.0940 & 0.2183 &
$\mathbb{S}_1\succ \mathbb{S}_3\succ \mathbb{S}_5\succ \mathbb{S}_2\succ \mathbb{S}_4$  \\
{Senapati et al.~\cite{SCMY2023}}  & 0.5678 & 0.3234 & 0.3627 & 0.2376 & 0.2050 &
$\mathbb{S}_1\succ \mathbb{S}_3\succ \mathbb{S}_2\succ \mathbb{S}_4\succ \mathbb{S}_5$  \\
{Senapati et al.~\cite{SCY2022}}   & 0.5678 & 0.3234 & 0.3627 & 0.2376 & 0.2050 &
$\mathbb{S}_1\succ \mathbb{S}_3\succ \mathbb{S}_2\succ \mathbb{S}_4\succ \mathbb{S}_5$  \\
Proposed method            & 0.7368 & 0.6203 & 0.6303 & 0.5575 & 0.5549 &
$\mathbb{S}_1\succ \mathbb{S}_3\succ \mathbb{S}_2\succ \mathbb{S}_4\succ \mathbb{S}_5$  \\
\bottomrule
\end{tabular}%
}
\end{table}

Observing from Table~\ref{Tab-Comparison-Anal}, we can conclude that
\begin{enumerate}[(1)]
  \item The best alternative obtained by all methods is the same, which is the alternative $\mathbb{S}_1$,
  although there are some differences in the ranking among the various methods.
  \item Only the method presented in~\cite{HCZLT2014} identifies the alternative
  $\mathbb{S}_2$ as the second-best alternative, while all other methods rank the
  alternative $\mathbb{S}_3$ as the second-best alternative. This discrepancy arises
  from the fact that the aggregation method proposed presented in~\cite{HCZLT2014} is
  the only one considering the interactivity.
  \item All the methods, except those presented in~\cite{CC2016,Garg2017}, consistently rank the alternative
  $\mathbb{S}_5$ as the worst alternative. This result is more reasonable. Intuitively, when comparing
  the alternatives $\mathbb{S}_4$ and $\mathbb{S}_5$ individually, since the weights for the
  attributes $\mathscr{O}_1$ and $\mathscr{O}_2$
  are equal and both equal to $0.25$, we can approximate $\langle 0.2, 0.5\rangle$ and $\langle 0.3, 0.6\rangle$
  as equal in terms of $L$-values. Therefore, the comparison between the alternatives $\mathbb{S}_4$ and $\mathbb{S}_5$
  on the attributes $\mathscr{O}_1$ and $\mathscr{O}_2$ is similar to the comparison of the comparison between
  $\langle 0.7, 0.3\rangle$ and $\langle 0.5, 0.4\rangle$ with the weight of $0.25$. Since the difference
  between $\langle 0.7, 0.3\rangle$ and $\langle 0.5, 0.4\rangle$ at the weight of $0.25$ is greater than the
  difference between $\langle 0.6, 0.1\rangle$ and $\langle 0.7, 0.1\rangle$ at the weight of $0.3$, and the
  alternatives $\mathbb{S}_4$ and $\mathbb{S}_5$ have equal values on the attribute $\mathscr{O}_4$, we can intuitively
  conclude that the alternative $\mathbb{S}_4$ is superior to the alternative $\mathbb{S}_5$.
  \item Including our proposed MADM method, $2/3$ of the MADM methods shown in Table~\ref{Tab-Comparison-Anal} yield the
  ranking result $\mathbb{S}_1\succ \mathbb{S}_3\succ \mathbb{S}_2\succ \mathbb{S}_4\succ \mathbb{S}_5$.
  Therefore, the result obtained by our proposed MADM method is reliable. This also indicates that our method is effective.
\end{enumerate}

Furthermore, to illustrate the superiority of our proposed MADM method, we consider other IF decision matrices in Example~\ref{Appl-Exm} as shown in Tables~\ref{tab-1a}
and \ref{tab-1b}.
\begin{table}[H]
\caption{IF decision matrix in Example~\ref{Appl-Exm}}
\label{tab-1a}\centering%
\scalebox{1.0}{
\begin{tabular}{lccccc}
\toprule
~   & $\mathscr{O}_1$  & $\mathscr{O}_2$ & $\mathscr{O}_3$  & $\mathscr{O}_4$   \\
\midrule
$\mathbb{S}_1$  & $\langle 0.6, 0.1 \rangle$ & $\langle 0.1, 0.8\rangle$ & $\langle 0.6, 0.2\rangle$ & $\langle 1, 0 \rangle$ \\
$\mathbb{S}_2$  & $\langle 0.7, 0.3 \rangle$ & $\langle 0.2, 0.4\rangle$ & $\langle 0.7, 0.2\rangle$ & $\langle 1, 0 \rangle$ \\
$\mathbb{S}_3$  & $\langle 0.6, 0.2 \rangle$ & $\langle 0.3, 0.6\rangle$ & $\langle 0.5, 0.3\rangle$ & $\langle 1, 0 \rangle$ \\
$\mathbb{S}_4$  & $\langle 0.2, 0.5 \rangle$ & $\langle 0.3, 0.7\rangle$ & $\langle 0.6, 0.1\rangle$ & $\langle 1, 0 \rangle$ \\
$\mathbb{S}_5$  & $\langle 0.5, 0.4 \rangle$ & $\langle 0.6, 0.3\rangle$ & $\langle 0.7, 0.1\rangle$ & $\langle 1, 0 \rangle$ \\
\bottomrule
\end{tabular}%
}
\end{table}

\begin{table}[H]
\caption{IF decision matrix in Example~\ref{Appl-Exm}}
\label{tab-1b}\centering%
\scalebox{1.0}{
\begin{tabular}{lccccc}
\toprule
~   & $\mathscr{O}_1$  & $\mathscr{O}_2$ & $\mathscr{O}_3$  & $\mathscr{O}_4$   \\
\midrule
$\mathbb{S}_1$  & $\langle 0.6, 0.1 \rangle$ & $\langle 0.1, 0.8\rangle$ & $\langle 0, 1\rangle$ & $\langle 0.8, 0.1\rangle$ \\
$\mathbb{S}_2$  & $\langle 0.7, 0.3 \rangle$ & $\langle 0.2, 0.4\rangle$ & $\langle 0, 1\rangle$ & $\langle 0.4, 0.3\rangle$ \\
$\mathbb{S}_3$  & $\langle 0.6, 0.2 \rangle$ & $\langle 0.3, 0.6\rangle$ & $\langle 0, 1\rangle$ & $\langle 0.7, 0.1\rangle$ \\
$\mathbb{S}_4$  & $\langle 0.2, 0.5 \rangle$ & $\langle 0.3, 0.7\rangle$ & $\langle 0, 1\rangle$ & $\langle 0.6, 0.3\rangle$ \\
$\mathbb{S}_5$  & $\langle 0.5, 0.4 \rangle$ & $\langle 0.6, 0.3\rangle$ & $\langle 0, 1\rangle$ & $\langle 0.6, 0.3\rangle$ \\
\bottomrule
\end{tabular}%
}
\end{table}


By fixing the weight vector $\bm{\Omega}=(0.25, 0.25, 0.3, 0.2)^{\top}$, we carried out a comparative
analysis with the MADM approaches presented in \cite{CC2016,FR2023,Garg2016,Garg2017,HCZLT2014,HHC2015,Huang2014,Liu2014,LC2017,SCMY2023,SCY2022,WL2012,
Xu2007,Xu2011,XY2006,Ye2017,ZX2017}.
The results are shown in Table~\ref{Tab-Comparison-Anal-a} and Table~\ref{Tab-Comparison-Anal-b}.

\begin{table}[H]	
	\centering
	\caption{A comparison of the ranking orders of the alternatives
$\mathbb{S}_1$--$\mathbb{S}_5$ in Example~\ref{Appl-Exm} with the
decision matrix shown in Table~\ref{tab-1a} for different MADM methods}
	\label{Tab-Comparison-Anal-a}
\scalebox{0.72}{
\begin{threeparttable}
\begin{tabular}{lccccccc}
\toprule
 Methods  & $\mathbb{S}_1$  & $\mathbb{S}_2$  & $\mathbb{S}_3$  & $\mathbb{S}_4$  & $\mathbb{S}_5$ & Ranking  \\
\midrule
\multirow{2}*{Xu~\cite{Xu2007}}
& $\langle 1, 0\rangle$ & $\langle 1, 0\rangle$ &
$\langle 1, 0\rangle$ & $\langle 1, 0\rangle$ &
$\langle 1, 0\rangle$ &
\multirow{2}*{$\mathbb{S}_1\thicksim \mathbb{S}_2\thicksim \mathbb{S}_3\thicksim \mathbb{S}_4\thicksim \mathbb{S}_5$}  \\
\cline{2-6}
~ & $1$ & $1$ & $1$ & $1$ & $1$ & ~ \\
\midrule
\multirow{2}*{Xu~\cite{Xu2011}}
& $\langle 1, 0\rangle$ & $\langle 1, 0\rangle$ &
$\langle 1, 0\rangle$ & $\langle 1, 0\rangle$ &
$\langle 1, 0\rangle$ &
\multirow{2}*{$\mathbb{S}_1\thicksim \mathbb{S}_2\thicksim \mathbb{S}_3\thicksim \mathbb{S}_4\thicksim \mathbb{S}_5$}  \\
\cline{2-6}
~ & $1$ & $1$ & $1$ & $1$ & $1$ & ~ \\
\midrule
\multirow{2}*{Wang and Liu~\cite{WL2012}}
& $\langle 1, 0\rangle$ & $\langle 1, 0\rangle$ &
$\langle 1, 0\rangle$ & $\langle 1, 0\rangle$ &
$\langle 1, 0\rangle$ &
\multirow{2}*{$\mathbb{S}_1\thicksim \mathbb{S}_2\thicksim \mathbb{S}_3\thicksim \mathbb{S}_4\thicksim \mathbb{S}_5$}  \\
\cline{2-6}
~ & $1$ & $1$ & $1$ & $1$ & $1$ & ~ \\
\midrule
\multirow{2}*{Huang~\cite{Huang2014}}
& $\langle 1, 0\rangle$ & $\langle 1, 0\rangle$ &
$\langle 1, 0\rangle$ & $\langle 1, 0\rangle$ &
$\langle 1, 0\rangle$ &
\multirow{2}*{$\mathbb{S}_1\thicksim \mathbb{S}_2\thicksim \mathbb{S}_3\thicksim \mathbb{S}_4\thicksim \mathbb{S}_5$}  \\
\cline{2-6}
~ & $1$ & $1$ & $1$ & $1$ & $1$ & ~ \\
\midrule
\multirow{2}*{Liu~\cite{Liu2014}}
& $\langle 1, 0\rangle$ & $\langle 1, 0\rangle$ &
$\langle 1, 0\rangle$ & $\langle 1, 0\rangle$ &
$\langle 1, 0\rangle$ &
\multirow{2}*{$\mathbb{S}_1\thicksim \mathbb{S}_2\thicksim \mathbb{S}_3\thicksim \mathbb{S}_4\thicksim \mathbb{S}_5$}  \\
\cline{2-6}
~ & $1$ & $1$ & $1$ & $1$ & $1$ & ~ \\
\midrule
\multirow{2}*{Chen and Chang~\cite{CC2016}}
& $\langle 1, 0\rangle$ & $\langle 1, 0\rangle$ &
$\langle 1, 0\rangle$ & $\langle 1, 0\rangle$ &
$\langle 1, 0\rangle$ &
\multirow{2}*{$\mathbb{S}_1\thicksim \mathbb{S}_2\thicksim \mathbb{S}_3\thicksim \mathbb{S}_4\thicksim \mathbb{S}_5$}  \\
\cline{2-6}
~ & $1$ & $1$ & $1$ & $1$ & $1$ & ~ \\
\midrule
\multirow{2}*{Garg~\cite{Garg2017}}
& $\langle 1, 0\rangle$ & $\langle 1, 0\rangle$ &
$\langle 1, 0\rangle$ & $\langle 1, 0\rangle$ &
$\langle 1, 0\rangle$ &
\multirow{2}*{$\mathbb{S}_1\thicksim \mathbb{S}_2\thicksim \mathbb{S}_3\thicksim \mathbb{S}_4\thicksim \mathbb{S}_5$}  \\
\cline{2-6}
~ & $1$ & $1$ & $1$ & $1$ & $1$ & ~ \\
\midrule
\multirow{2}*{Farid and Riaz~\cite{FR2023}}
& $\langle 1, 0\rangle$ & $\langle 1, 0\rangle$ &
$\langle 1, 0\rangle$ & $\langle 1, 0\rangle$ &
$\langle 1, 0\rangle$ &
\multirow{2}*{$\mathbb{S}_1\thicksim \mathbb{S}_2\thicksim \mathbb{S}_3\thicksim \mathbb{S}_4\thicksim \mathbb{S}_5$}  \\
\cline{2-6}
~ & $1$ & $1$ & $1$ & $1$ & $1$ & ~ \\
\midrule
\multirow{2}*{Senapati et al.~\cite{SCY2022}}
& $\langle 1, 0\rangle$ & $\langle 1, 0\rangle$ &
$\langle 1, 0\rangle$ & $\langle 1, 0\rangle$ &
$\langle 1, 0\rangle$ &
\multirow{2}*{$\mathbb{S}_1\thicksim \mathbb{S}_2\thicksim \mathbb{S}_3\thicksim \mathbb{S}_4\thicksim \mathbb{S}_5$}  \\
\cline{2-6}
~ & $1$ & $1$ & $1$ & $1$ & $1$ & ~ \\
\midrule
\multirow{2}*{Senapati et al.~\cite{SSSDRT2023}}
& $\langle 1, 0\rangle$ & $\langle 1, 0\rangle$ &
$\langle 1, 0\rangle$ & $\langle 1, 0\rangle$ &
$\langle 1, 0\rangle$ &
\multirow{2}*{$\mathbb{S}_1\thicksim \mathbb{S}_2\thicksim \mathbb{S}_3\thicksim \mathbb{S}_4\thicksim \mathbb{S}_5$}  \\
\cline{2-6}
~ & $1$ & $1$ & $1$ & $1$ & $1$ & ~ \\
\midrule
\multirow{2}*{Proposed method}
& $\langle 0.8570, 0.0564\rangle$ & $\langle 0.8268, 0.0955\rangle$ &
$\langle 0.8146, 0.1114\rangle$ & $\langle 0.7624, 0.1274\rangle$ &
$\langle 0.7796, 0.1609\rangle$ &
\multirow{2}*{$\mathbb{S}_1\succ \mathbb{S}_2\succ \mathbb{S}_3\succ \mathbb{S}_5\succ \mathbb{S}_4$}  \\
\cline{2-6}
~ & $0.8684$ & $0.8393$ & $0.8274$ & $0.7860$ & $0.7920$ & ~ \\
		\bottomrule
	\end{tabular}
\end{threeparttable}
}
\end{table}

\begin{table}[H]	
	\centering
	\caption{A comparison of the ranking orders of the alternatives
$\mathbb{S}_1$--$\mathbb{S}_5$ in Example~\ref{Appl-Exm} with the
decision matrix shown in Table~\ref{tab-1b} for different MADM methods}
	\label{Tab-Comparison-Anal-b}
\scalebox{0.72}{
\begin{threeparttable}
\begin{tabular}{lccccccc}
\toprule
 Methods  & $\mathbb{S}_1$  & $\mathbb{S}_2$  & $\mathbb{S}_3$  & $\mathbb{S}_4$  & $\mathbb{S}_5$ & Ranking  \\
\midrule
\multirow{2}*{Xu and Yager~\cite{XY2006}}
& $\langle 0, 1\rangle$ & $\langle 0, 1\rangle$ &
$\langle 0, 1\rangle$ & $\langle 0, 1\rangle$ &
$\langle 0, 1\rangle$ &
\multirow{2}*{$\mathbb{S}_1\thicksim \mathbb{S}_2\thicksim \mathbb{S}_3\thicksim \mathbb{S}_4\thicksim \mathbb{S}_5$}  \\
\cline{2-6}
~ & $0$ & $0$ & $0$ & $0$ & $0$ & ~ \\
\midrule
\multirow{2}*{He~et~al.~\cite{HCZLT2014}}
& $\langle 0, 1\rangle$ & $\langle 0, 1\rangle$ &
$\langle 0, 1\rangle$ & $\langle 0, 1\rangle$ &
$\langle 0, 1\rangle$ &
\multirow{2}*{$\mathbb{S}_1\thicksim \mathbb{S}_2\thicksim \mathbb{S}_3\thicksim \mathbb{S}_4\thicksim \mathbb{S}_5$}  \\
\cline{2-6}
~ & $0$ & $0$ & $0$ & $0$ & $0$ & ~ \\
\midrule
\multirow{2}*{He~et~al.~\cite{HHC2015}}
& $\langle 0, 1\rangle$ & $\langle 0, 1\rangle$ &
$\langle 0, 1\rangle$ & $\langle 0, 1\rangle$ &
$\langle 0, 1\rangle$ &
\multirow{2}*{$\mathbb{S}_1\thicksim \mathbb{S}_2\thicksim \mathbb{S}_3\thicksim \mathbb{S}_4\thicksim \mathbb{S}_5$}  \\
\cline{2-6}
~ & $0$ & $0$ & $0$ & $0$ & $0$ & ~ \\
\midrule
\multirow{2}*{Garg~\cite{Garg2016}}
& $\langle 0, 1\rangle$ & $\langle 0, 1\rangle$ &
$\langle 0, 1\rangle$ & $\langle 0, 1\rangle$ &
$\langle 0, 1\rangle$ &
\multirow{2}*{$\mathbb{S}_1\thicksim \mathbb{S}_2\thicksim \mathbb{S}_3\thicksim \mathbb{S}_4\thicksim \mathbb{S}_5$}  \\
\cline{2-6}
~ & $0$ & $0$ & $0$ & $0$ & $0$ & ~ \\
\midrule
\multirow{2}*{Liu and Chen~\cite{LC2017}}
& $\langle 0, 1\rangle$ & $\langle 0, 1\rangle$ &
$\langle 0, 1\rangle$ & $\langle 0, 1\rangle$ &
$\langle 0, 1\rangle$ &
\multirow{2}*{$\mathbb{S}_1\thicksim \mathbb{S}_2\thicksim \mathbb{S}_3\thicksim \mathbb{S}_4\thicksim \mathbb{S}_5$}  \\
\cline{2-6}
~ & $0$ & $0$ & $0$ & $0$ & $0$ & ~ \\
\midrule
\multirow{2}*{Zhou and Xu~\cite{ZX2017}}
& $\langle 0, 1\rangle$ & $\langle 0, 1\rangle$ &
$\langle 0, 1\rangle$ & $\langle 0, 1\rangle$ &
$\langle 0, 1\rangle$ &
\multirow{2}*{$\mathbb{S}_1\thicksim \mathbb{S}_2\thicksim \mathbb{S}_3\thicksim \mathbb{S}_4\thicksim \mathbb{S}_5$}  \\
\cline{2-6}
~ & $0$ & $0$ & $0$ & $0$ & $0$ & ~ \\
\midrule
\multirow{2}*{Ye~\cite{Ye2017}}
& $\langle 0, 1\rangle$ & $\langle 0, 1\rangle$ &
$\langle 0, 1\rangle$ & $\langle 0, 1\rangle$ &
$\langle 0, 1\rangle$ &
\multirow{2}*{$\mathbb{S}_1\thicksim \mathbb{S}_2\thicksim \mathbb{S}_3\thicksim \mathbb{S}_4\thicksim \mathbb{S}_5$}  \\
\cline{2-6}
~ & $0$ & $0$ & $0$ & $0$ & $0$ & ~ \\
\midrule
\multirow{2}*{Senapati et~al.~\cite{SCMY2023}}
& $\langle 0, 1\rangle$ & $\langle 0, 1\rangle$ &
$\langle 0, 1\rangle$ & $\langle 0, 1\rangle$ &
$\langle 0, 1\rangle$ &
\multirow{2}*{$\mathbb{S}_1\thicksim \mathbb{S}_2\thicksim \mathbb{S}_3\thicksim \mathbb{S}_4\thicksim \mathbb{S}_5$}  \\
\cline{2-6}
~ & $0$ & $0$ & $0$ & $0$ & $0$ & ~ \\
\midrule
\multirow{2}*{Proposed method}
& $\langle 0.3073, 0.5998\rangle$ & $\langle 0.2060, 0.6945\rangle$ &
$\langle 0.2514, 0.6561\rangle$ & $\langle 0.2086, 0.7446\rangle$ &
$\langle 0.1798, 0.7751\rangle$ &
\multirow{2}*{$\mathbb{S}_1\succ \mathbb{S}_3\succ \mathbb{S}_2\succ \mathbb{S}_4\succ \mathbb{S}_5$} \\
\cline{2-6}
~ & $0.3662$ & $0.2778$ & $0.3148$ & $0.2440$ & $0.2152$ & ~ \\
		\bottomrule
	\end{tabular}
\end{threeparttable}
}
\end{table}

Observing from Table~\ref{Tab-Comparison-Anal-a} and Table~\ref{Tab-Comparison-Anal-b},
we can conclude that, except for our proposed MADM method, the comprehensive evaluation values obtained by
other MADM methods presented in \cite{CC2016,FR2023,Garg2016,Garg2017,HCZLT2014,HHC2015,Huang2014,Liu2014,LC2017,SCMY2023,SCY2022,WL2012,
Xu2007,Xu2011,XY2006,Ye2017,ZX2017} are either $\langle 0, 1\rangle$ or $\langle 1, 0 \rangle$, which leads
to the inability of these methods to effectively distinguish between any two alternatives.

To conclude, the proposed MADM approach is not only effective, but also comprehensively utilizes t-norms and the
proposed IFMWAG operator to aggregate the IF information, which can completely overcome the disadvantage of
the methods presented in \cite{CC2016,FR2023,Garg2016,Garg2017,HCZLT2014,HHC2015,Huang2014,Liu2014,LC2017,SCMY2023,SCY2022,
WL2012,Xu2007,Xu2011,XY2006,Ye2017,ZX2017}
that either
only utilize the average operator or the geometric operator, resulting in the indistinguishability of the
ranking orders of the alternatives. Therefore, the comprehensive reliability of the proposed MADM approach
outperforms the MADM approaches presented in
\cite{CC2016,FR2023,Garg2016,Garg2017,HCZLT2014,HHC2015,Huang2014,Liu2014,LC2017,SCMY2023,SCY2022,WL2012,Xu2007,Xu2011,
XY2006,Ye2017,ZX2017}.
}

\section{Conclusion}
\label{Sec-VII}

{This paper systematically investigates the power operation of continuous t-norms and applies it to the IF MADM
problems. It is first proved that a continuous t-norm is power stable if and only if every point is a power stable
point, and if and only if it is the minimum t-norm, or it is strict, or it is an ordinal sum of strict t-norms.
Then, an important computing formula is obtained for the power of continuous t-norms by using continuous t-norms'
representation theorem. Based on the power of t-norms, four basic operations for IFSs induced by a continuous t-norm
are introduced, including addition, multiplication, scalar multiplication, and power operations. Besides, some basic
properties are obtained for these four operations. Based on these four operational laws, the IF weighted average
and IF weighted geometric operators induced by a continuous t-norm are formulated. It is shown that they are monotonous,
idempotent, and bounded. Compared to the limitation of all existing IF aggregation operators that only consider
the t-norms with continuous AGs, our results can be more widely applied to the IF MADM problems. Combining the IF weighted
average and IF weighted geometric operators, the IF mean weighted average and geometric operator (IFMWAG) is proposed
and a novel IF MADM method based on this operator is established. Besides, a practical example and
comparative analysis with other decision-making methods further illustrate the effectiveness of the
developed IF MADM method.}



\section*{References}
\bibliographystyle{abbrv}
\bibliography{IEEEexample}

\end{document}